\def\blfootnote{\gdef\@thefnmark{}\@footnotetext}
\newtheorem{theorem}{Theorem}
\newtheorem{lemma}{Lemma}[section]
\newtheorem{prop}{Proposition}[section]
\newcommand{\be}{\begin{eqnarray}}
\newcommand{\ba}{\begin{array}}
\newcommand{\ben}{\begin{eqnarray*}}
\newcommand{\ee}{\end{eqnarray}}
\newcommand{\ea}{\end{array}}
\newcommand{\een}{\end{eqnarray*}}
\newcommand{\ms}{\medskip}
\newcommand{\NN}{\hbox{I\kern-.2em\hbox{N}}}
\def\beq{\begin{equation}}
\def\eeq{\end{equation}}
\def\beq{\begin{equation*}}
\def\eeq{\end{equation*}}
\newcommand{\bl}{\be\label}\newcommand{\va}{\theta}\newcommand{\p}{\partial}
\newcommand{\al}{\alpha}
\newcommand{\dsp}{\displaystyle}
\numberwithin{equation}{section}\newcommand{\tin}{\to\infty}
\newcommand{\mrn}{\mathbb{R}^n}
\newcommand{\lm}{\lambda}
\newcommand{\equ}[1]{(\ref{#1})}
\newcommand{\clb}{\color{black}}\newcommand{\clr}{\color{black}}
\begin{document}
\title[Uniqueness of bound states]
{Uniqueness of bound states to \\
$\Delta u-u+|u|^{p-1}u= 0$ in $\mrn$, $n\ge 3$}

\author[M. Tang]{Moxun Tang}
\address{Department of Mathematics, Michigan State University,
East Lansing, Michigan, 48824, USA} \email{tangm@msu.edu}

\blfootnote{\textup{2020} \textit{Mathematics Subject Classification}:
35B05, 34B40, 35J66, 35Q51}
\keywords{Elliptic equations, bound state, standing wave,
Klein-Gordon, Schr\"odinger.}

\begin{abstract} We give a positive answer to a conjecture of Berestycki and Lions
in 1983 on the uniqueness of bound states to $\Delta u +f(u)=0$ in $\mrn$, $u\in H^1(\mrn)$,  $u\not\equiv 0$, $n\ge 3$. For the model nonlinearity $f(u)=-u+|u|^{p-1}u$, $1<p<(n+2)/(n-2)$, {\clr arising} from finding standing waves of Klein-Gordon equation or nonlinear Schr\"odinger equation, we show that, for each integer $k\ge 1$, the problem has a unique solution $u=u(|x|)$, $x\in \mrn$, up to translation and reflection, that has precisely $k$ zeros for $|x|>0$.
\end{abstract}

\maketitle
\section {Introduction}
In their celebrated studies \cite{bl1, bl2}, Berestycki and Lions considered solutions of the semi-linear elliptic problem
\bl{ble} \Delta u +f(u)=0\,\,\,{\rm in}\,\,\, \mrn,\quad u\in H^1(\mrn), \quad u\not\equiv 0, \quad n\ge 3,\ee
where $f: \mathbb{R}\to \mathbb{R}$ is assumed to be a continuous and odd function satisfying

(H1) $\,\, -\infty<\liminf_{s\to 0}f(s)/s\le \limsup_{s\to 0}f(s)/s=-m<0.$

(H2) $\,\, -\infty\le \limsup_{s\tin}f(s)/s^{(n+2)/(n-2)}\le 0.$

(H3) $\,\,$ There exists $\xi>0$ such that $F(\xi)=\int_0^\xi f(s)\, ds>0$.

By using variational methods, they proved the following classic result:

\noindent
{\bf The existence theorem of Berestycki and Lions \cite{bl1, bl2}.} {\it Let $f: \mathbb{R}\to \mathbb{R}$ be a continuous and odd function satisfying (H1), (H2) and (H3). Then problem \equ{ble} possesses an infinite sequence of distinct solutions $(u_k)_{k\ge 0}$ such that

i) $u_k\in C^2(\mrn)$ and is radial: $u(x)=u(r)$, where $r=|x|$.

ii) $u_k$ together with its derivatives up to order 2 have exponential decay at infinity.

iii) $u_0>0$ in $\mrn$ and decreases with respect to $r$.}

A positive solution of \equ{ble} is called a {\it ground state}. Under conditions (H1)-(H3), a ground state is necessarily radial \cite{gnn}. A sign-changing radial solution of \equ{ble} is called a {\it bound state} (or {\it excited state} \cite{cls, rr}). The existence theorem of Berestycki and Lions identifies sufficient and nearly necessary conditions for existence of ground states and bound states of \equ{ble}, and covers the most important model case
\begin{equation}\label{ble1}
\begin{cases} \Delta u +f(u)=0\,\,\,{\rm in}\,\,\, \mrn,
\quad u\in H^1(\mrn), \quad u\not\equiv 0,
\\
f(u)=-u+|u|^{p-1}u,\,\,\,\quad 1<p<\frac{n+2}{n-2}, \quad n\ge 3.
\end{cases}
\end{equation}
For the critical or supercritical exponent $p\ge (n+2)/(n-2)$, one can use the identity of Pohozaev \cite{poh} to show that \equ{ble1} admits no radial solutions.

In practice, a solution of $\Delta u + f(u) = 0$ can be interpreted as a steady-state of the reaction-diffusion equation $u_t=\Delta u + f(u)$.
It arises, for instance, in the study of phase transitions \cite{serrin81}, ecological modeling \cite{ln}, {\clb the elimination of dengue and chikungunya viruses \cite{hty}, and in the analysis} of Turing instability and pattern formation \cite{bas, nt, nit}.
The study of \equ{ble1} is motivated in particular by the search for standing waves, a special type of solitary waves, in nonlinear Klein-Gordon equations or Schr\"odinger equations \cite{bl1, st77, tao}. Consider the Klein-Gordon equation
\bl{kg} \phi_{tt}-\Delta \phi +m^2 \phi= |\phi|^{p-1}\phi, \quad m>0.\ee
If $\phi$ is a standing wave, that is, $\phi(x, t)=e^{i\omega t}u(x)$, $\omega\in \mathbb{R}$, $|\omega|<m$, and $u: \mrn\to \mathbb{R}$, then one is led to the equation
\bl{kg1} \Delta u + (\omega^2-m^2)u + |u|^{p-1}u= 0.\ee
Similarly, for the nonlinear Schr\"odinger equation
\bl{sde} i\phi_t+\Delta \phi+|\phi|^{p-1}\phi=0,\ee
the real function $u(x)$ in the standing wave solution $\phi(x, t)=e^{i\omega t}u(x)$ satisfies
\bl{sde1} \Delta u-\omega u+|u|^{p-1}u= 0.\ee
Both equations \equ{kg1} and \equ{sde1} can be transformed to \equ{ble1} by scaling \cite{tao}.

Back {\clb in} 1951, Finkelstein, LeLevier, and Ruderman \cite{flr} introduced a scalar field equation, which is a special case of \equ{kg}, to investigate certain properties in elementary particle theory. The real part of the standing wave solves (after scaling)
\bl{ble2}
\Delta u -u+u^3=0\,\,\,{\rm in}\,\,\, \mathbb{R}^3,
\quad u\in H^1(\mathbb{R}^3), \quad u\not\equiv 0.\ee
Through a phase-plane analysis, they demonstrated that \equ{ble2} admits a ground state and at
least one particle-like solution (bound state) with any given number of nodes (simple zeros).
One year later, Rosen and Rosenstock \cite{rr} approximated the interaction
between two far apart identical particles by using the ground state of \equ{ble2}, and showed that the interaction is described by the Yukawa potential. Of course, \equ{ble2} is a special case of \equ{ble1} with $n=p=3$. It can also be viewed as the equation for $u(x)$ in the standing wave for the cubic Schr\"odinger equation in $\mathbb{R}^3$ \cite{mps, sch}.

 A rigorous proof for the existence of ground states of \equ{ble2} was
 given by Nehari \cite{ne2}, who also guided Ryder \cite{ryd} to prove the existence of bound states of \equ{ble2} by applying the ODE techniques from \cite{ne1}. The existence of infinitely many radial solutions to \equ{ble} for a large class of $f(u)$, including the model case \equ{ble1}, was first proved by Strauss \cite{st77}, and {\clb was later brought to prominence in the celebrated studies of Berestycki and Lions \cite{bl1, bl2}}. Existence of radial solutions of \equ{ble1} with any prescribed number of zeros was proved by Jones and K\"upper \cite{jk}
 by using a dynamical systems approach; a simpler proof by a direct ODE method was given later by McLeod, Troy and Weissler \cite{mtw}.

In the seminal work \cite{we2}, Weinstein observed that one could use solutions of \equ{ble1}
to construct a large family of solutions to the Schr\"odinger equation \equ{sde}, or
to characterize the solutions of \equ{sde} that develop singularities. In \cite{we1}, Weinstein also noticed an interesting relation between finding a ground state of \equ{ble1} and estimating the ``best constant" in the Gagliardo-Nirenberg inequality
\ben \frac {\int_{\mrn} |u|^{p+1}}
{\left(\int_{\mrn} |\nabla u|^2 \right)^{n(p-1)/4} \left(\int_{\mrn} |u|^2 \right)^{1-\frac{(n-2)(p-1)}4} }\le C_{n, p},\qquad 0\not\equiv u\in H^1(\mrn). \een
If the ratio in the left is maximized at a function $u\in H^1(\mrn)$, then
$u$ solves \equ{ble1} in the sense of distributions (after scaling); see also
Theorems 1--2 in Frank \cite{fra}, and Lemma B.1 in
Tao \cite{tao}. In this respect, {\clb the} uniqueness of ground states is essential, as further evidenced by the important work of
Del Pino and Dolbeault \cite{del}.

In the concluding section, Berestycki and Lions  \cite{bl1, bl2} proposed a conjecture on the characterization of bound states, {\clr namely the sign-changing radial solutions of \equ{ble}}.

\noindent
{\bf The conjecture of Berestycki and Lions \cite{bl1, bl2}.} {\it We conjecture that, at least for some classes of $f$'s{\footnote {In \cite{bl1, bl2}, the nonlinearity in \equ{ble} is {\clb denoted by $g$; in this work, we relabel it as} $f$. }}, there is one and exactly one solution of \equ{ble} that has precisely
$k-1$ nodes (i.e. simple zeroes) for $r\in (0, \infty)$}.

{\clb For the model system \equ{ble1}, proving this conjecture is equivalent to showing that there
 exists} exactly one $\al$ such that the solution $u(r)$ to the initial value problem
\begin{equation}\label{eu}
\begin{cases}
u''+\frac {n-1}ru'+f(u)=0,\quad
u(0)=\alpha>0,\quad u'(0)=0, \quad r>0.\\
f(u)=-u+|u|^{p-1}u,\qquad 1<p<\frac{n+2}{n-2}, \quad  {\clr n\in \mathbb{N},} \quad n\ge 3,  
\end{cases}
\end{equation}
has precisely $k\ge 0$ simple zeros and $|u(r)|$ decays exponentially at infinity. {\clb Despite
 its singularity} at $r=0$, \equ{eu} admits a unique solution $u\in C^2 ([0,\infty))$ that depends smoothly on $\al$ \cite{bl1, blp, tao}. {\clb By the uniqueness theorem for ODEs, any zero of $u$ is simple. If $u_k(r)$ is a $k$-node bound state of \equ{eu}, then the functions $u_k(|x-x_0|)$, with $x_0\in \mrn$, together with} $-u_k(|x-x_0|)$, form {\clr a} family of $k$-node bound states {\clb of \equ{ble1} by virtue of translational and reflection invariance}.

For the special case \equ{ble2}, the same question was also asked by Hastings and McLeod in \cite{ham}, Chapter 19: {\it Three unsolved problems}.

\noindent
{\bf The open problem of Hastings and McLeod \cite{ham}.} {\it Prove that for each positive integer k the boundary value problem
\bl{hm} u''+\frac 2ru'-u+u^3=0,\quad u'(0)=0,\quad \lim_{r\tin}u(r)=0,\ee
has at most one solution which is initially positive and has exactly $k$ zeros
in $(0, \infty)$.}

{\clb Significant progress has been achieved} on these open problems. In \cite{cgy},
Cort\'azar, Garc\'ia-Huidobro and Yarur proved the uniqueness of {\clb $1$-node} bound states of \equ{ble} for a {\clb more general} nonlinearity $f(u)=-|u|^{q-1}u+|u|^{p-1}u$, $0<q<p$,
$n=2, 3, 4$, and $p+q\le 2/(n-2)$. {\clb Remarkably, when viewing the dimension $n$ as a real variable in $(1, \infty)$, their uniqueness theorem requires no further restrictions on $p$ and $q$ for $1<n\le 2$.} In \cite{awy}, Ao, Wei and Yao proved the uniqueness of bound states of \equ{ble1} {\clb for $p$ sufficiently} close to $(n+2)/(n-2)$, with their proof {\clb focusing on $1$-node} bound states.
{\clb An exceptional advance was recently achieved} by Cohen, Li and Schlag \cite{cls}, {\clb who} nearly solved the open problem of Hastings and McLeod \cite{ham}: By {\clb means of} a rigorous computer-assisted analytical {\clb method, they established}

\noindent
{\bf The uniqueness theorem of Cohen, Li and Schlag \cite{cls}.} {\it The first twenty bound states of {\clb \equ{ble2} (or \equ{hm})} are unique}.

In this work, we prove
\begin{theorem}\label{hh1} For each positive integer $k$, there exists a unique bound state {\clb of \equ{ble1} with} precisely $k$ nodes for $r\in (0,\infty)$, up to translation and reflection.

{\clb Moreover, for each finite ball $B \subset \mrn$, there exists a unique radial solution to
\ben \Delta u-u+|u|^{p-1}u=0 \,\,{\rm in}\,\, B, \quad u=0\,\,{\rm on}\,\,\partial B, \quad 1<p<\frac{n+2}{n-2}, \quad n\ge 3,\een
that changes sign exactly $k$ times in the interior of the ball, up to reflection.}
\end{theorem}

{\clb The first part of} Theorem \ref{hh1} is an immediate consequence of the following result that {\clb characterizes} all solutions of \equ{eu}. To state the result, {\clb we introduce
the unique positive zero $\al_*$ of $F(u){\clr =\int_0^uf(s)\, ds}$ and the unique positive zero $\al^*$ of $2nF(u)-(n-2)uf(u)$: }
\bl{als} \al^*=\left(\frac{2(p+1)}{(n+2)-p(n-2)}\right)^{1/(p-1)},\quad
\alpha_*=\left(\frac{p+1}2\right)^{1/(p-1)}>1.\ee
{\clb Given $p>1$, we have $\al^*>\al_*$ if and only if $n>2$.}

\begin{theorem}\label{hh2} Let $u(r)$ be the solution of \equ{eu}. There exists a sequence of initial data $\al_0<\al_1<\al_2<\cdots$ with
$\al_0>\al^*$ and $\lim_{k\tin} \al_k=\infty$
such that, for $u_k(r)$ denoting the solution with $u(0)=\al_k$,

(i) $u_0(r)$ is the unique ground state of \equ{ble1};

(ii) $u_k(r)$, $k\ge 1$, is the unique bound state to \equ{ble1} that has precisely
$k$ (simple) zeros $z_1<\cdots<z_k$ in $(0, \infty)$. Between each pair of its two consecutive zeros $z_i$ and $z_{i+1}$, $i\in \{1, 2, \cdots, k-1\}$, $u_k(r)$ has exactly one critical point
$c_i$. Behind its last zero $z_k$, $u_k(r)$ has exactly one critical point $c_k$, after which $|u_k(r)|$ decreases with
\bl{lim0} \lim_{r\to \infty}\frac {u'_k(r)}{u_k(r)}=-1, \quad {\rm and} \,\,\,
\limsup_{r\to\infty} |u_k(r)|e^{(1-\epsilon)\, r}<\infty \,\,\, {\rm for \,\, any}\, \, \epsilon\in (0, 1). \ee
At each critical point, $|u_k|>\al_*$.

(iii) If $\al=1$, then $u(r)\equiv 1$. If $\al<\al_0$ and $\al\ne 1$, then $u(r)>0$ in $(0, \infty)$ with $\inf u>0$, and $u(r)$ oscillates about $u\equiv 1$.

(iv) If $\al\in (\al_k, \al_{k+1})$, $k\ge 0$, then $u(r)$ is a {\clb nodal (i.e., sign-changing)} solution with precisely $k+1$ zeros, and oscillates about $u\equiv 1$ or $u\equiv -1$ behind its last zero.
\end{theorem}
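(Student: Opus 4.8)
The plan is to run the shooting method on the initial value problem \equ{eu} and track, for each $\al>\al^*$, the solution $u(r,\al)$ together with its derivative $v(r,\al)=\p u/\p\al$ solving \equ{ev}. The organizing principle is a Pohozaev-type functional and a count of zeros. First I would record the basic phase-plane facts: because $f(u)=-u+|u|^{p-1}u$ has $f<0$ on $(0,1)$, $f>0$ on $(1,\infty)$, $F<0$ on $(0,\al_*)$, $F>0$ on $(\al_*,\infty)$, the energy $E(r)=\tfrac12 u'^2+F(u)$ is nonincreasing (with $E'=-\tfrac{n-1}{r}u'^2$), so for $\al>\al_*$ the solution has $E(r)\to L\in[F(\al_*),F(\al))$; if it stays positive it either converges monotonically to $0$ (a ground state, $L=0$), stays above $1$ and oscillates about $u\equiv1$ (when $\al<$ some threshold, $0<L<F(\al)$ impossible — rather $L<0$), or crosses zero. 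I would then show that for $\al>\al^*$ the solution \emph{must} have a first zero $z_1(\al)$, using the Pohozaev identity: the function $P(r,\al)=r^{n-1}\big(u u' + \tfrac{n-2}{n}\,r\,(\tfrac12 u'^2+F(u)) - \tfrac1n r\, u f(u)\big)$ (or the standard $P$-function for this equation) is monotone, and its sign forces a zero when $\al>\al^*$; for $\al\le\al^*$ combined with $\al>\al_*$ the solution is global and positive. This gives the dichotomy between (iii) and the oscillatory/nodal regimes.

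Next, the heart of the argument — the \emph{uniqueness within each nodal class} — I would obtain via the Coffman–Kwong linearization. Define $w=ru'+ \lambda u$ for a well-chosen constant $\lambda$ (the standard choice making $w$ solve an equation with controllable sign), or work directly with $v(r,\al)$. The key lemma to prove is: \emph{if $u(\cdot,\al)$ has exactly $k$ zeros $z_1<\cdots<z_k$ and a $(k{+}1)$-st would-be zero has not yet formed, then $v(z_k,\al)$ and $u'(z_k,\al)$ have opposite signs in a definite sense, and $v$ itself has exactly $k$ zeros on $(0,z_k)$, interlacing with the critical points $c_i$.} This is proved by a Sturm-comparison / integrating-factor argument on the intervals between consecutive zeros and critical points of $u$, exploiting the monotonicity of $uf'(u)/f(u)$ away from $u=\pm1$ (here $uf'(u)/f(u)=\tfrac{-1+p|u|^{p-1}}{-1+|u|^{p-1}}$ is monotone in $|u|$ on each side of $1$) — this is exactly the structural hypothesis that made Kwong's and Serrin–Tang's ground-state arguments work, now applied piece by piece. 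Once this sign/count information on $v$ is established, a continuity-in-$\al$ argument is standard: as $\al$ increases past a value $\al_k$ at which the $k$-th zero escapes to infinity (i.e.\ $u(\cdot,\al_k)$ is a genuine $k$-node bound state), $v(z_k,\al)\ne0$ prevents two distinct $\al$'s from both yielding bound states with the same node count, so each $\al_k$ is unique; the set $\{\al_k\}$ is then defined as the boundary points between consecutive "open" nodal strata $(\al_k,\al_{k+1})$, and one shows $\al_k\uparrow\infty$ because the number of zeros of $u(\cdot,\al)$ on any fixed interval grows with $\al$ (large-$\al$ rescaling $u=\al U(\al^{(p-1)/2}r)$ turns the equation into $\Delta U+|U|^{p-1}U=O(\al^{-(p-1)})$, whose positive-mass solution oscillates, forcing many zeros).

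The remaining structural claims in (ii) follow once the skeleton above is in place. That each $u_k$ has exactly one critical point $c_i$ strictly between consecutive zeros $z_i,z_{i+1}$, and exactly one critical point $c_k$ after $z_k$, comes from the energy monotonicity plus the observation that on an interval where $u$ has constant sign and $|u|$ is small the equation $u''+\tfrac{n-1}{r}u' = u - |u|^{p-1}u$ has $u''$ of the sign of $u$ at small amplitude, precluding a second turning point; more precisely, between two zeros $u$ has at least one critical point by Rolle, and a second would contradict the first-integral sign analysis (this is where $|u|>\al_*$ at every critical point enters, obtained by evaluating $E$ at $c_i$: $E(c_i)=F(u(c_i))$ and $E$ is decreasing with $E(z_k)=\tfrac12 u'(z_k)^2>0$, but past the last zero one needs the refined bound $\limsup E<0$ is false — rather $E(c_k)=F(u(c_k))$ and $E(c_k)\ge L$; I will need to show $L$ can be taken $\ge F(\al_*)$ only with equality excluded, giving $|u(c_k)|>\al_*$). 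Finally, the exponential decay \equ{lim0} behind the last critical point is the standard linearization at $u=0$: once $|u|$ is small and monotone the equation is a perturbation of $u''+\tfrac{n-1}{r}u'-u=0$, whose decaying solution behaves like $r^{-(n-1)/2}e^{-r}$, yielding $u'/u\to-1$ and the stated $\limsup|u_k|e^{(1-\epsilon)r}<\infty$.

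I expect the \textbf{main obstacle} to be the inductive control of the linearized solution $v(r,\al)$ across \emph{many} zeros of $u$: the ground-state case ($k=0$) only needs the sign of $v$ up to the first zero, but here one must propagate precise sign and zero-count information for $v$ through every nodal interval, and the Sturm-comparison constants degenerate near the critical points $c_i$ and near the values $u=\pm1$ where $uf'(u)/f(u)$ is not defined as monotone across the singularity. Handling the passage of $u$ through the levels $\pm1$ — splitting each nodal interval at the points where $|u|=1$ and patching the comparison arguments — is the delicate technical core, and is presumably what distinguishes this work from the earlier partial results \cite{cgy, awy, cls} that only treated one sign change or relied on computer assistance.
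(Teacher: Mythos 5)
Your overall architecture matches the paper's: shoot in $\al$, track the variation $v=\p u/\p\al$, establish a precise zero count and sign pattern for $v$ relative to the zeros and critical points of $u$, and then convert this into monotonicity of the zeros $z_i(\al)$ plus a jump of the node-count function at each bound state. The auxiliary facts you list (energy monotonicity, $|u|>\al_*$ at critical points via $E(c_i)=F(u(c_i))>E_\infty\ge 0$, one critical point per nodal interval, the Peletier--Serrin tail asymptotics, finiteness of the zero set, and $\al_k\to\infty$) are all present in the paper and are indeed the routine part.

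The genuine gap is the key lemma itself. You assert that the statement ``$v$ has exactly $k$ zeros on $(0,z_k)$, interlacing with the $c_i$'' follows from ``a Sturm-comparison / integrating-factor argument on the intervals between consecutive zeros and critical points of $u$, exploiting the monotonicity of $uf'(u)/f(u)$ \ldots\ now applied piece by piece.'' This is precisely the step that does not work by direct iteration of the Kwong/McLeod machinery, and it is where all the difficulty of the forty-year-old conjecture lives. The Wronskian functionals that drive the ground-state proof ($W_a=Q-aM$ with $a=\omega(\tau_1)$, or $T_1=Q-g_1(u)M$) are anchored at $r=0$ where $Q(0)=M(0)=0$; in a later phase they must be restarted at $c_{i-1}$ with no such normalization, the quantity $Q$ actually \emph{decreases} near each interior zero $z_i$ so its positivity is not preserved by any monotonicity argument, and $T_1$, $T_2$ are undefined at $z_i$ since $u$ vanishes there. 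The paper's solution is to isolate a triple $(Q,M,T_2)$ that is both \emph{sufficient} (positivity at $c_{i-1}$ forces exactly one zero of $v$ in phase $i$) and \emph{renewable} (positivity is recovered at $c_i$), and the renewability requires new objects with no analogue in the ground-state theory: the bridge functions $B_0$ and $B_a$ that transport positivity across the interval $(b_i,\overline b_i)$ where $u$, $f(u)$, $F(u)$ all change sign, a connection identity linking $Q$, $M$, $T_2$ to the Pohozaev functions $P$, $P_1$, and a reflection/comparison argument matching points $r_{i\mu}$ and $\overline r_{i\mu}$ with $|u(r_{i\mu})|=|u(\overline r_{i\mu})|$ on the two sides of $z_i$ --- plus a separate, delicate treatment of the residual case $n=3$, $1<p<2$, where the needed positivity of $Q_n$ fails pointwise and only an integral version survives. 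None of this is conjured by ``patching the comparison arguments'' at the levels $|u|=1$, which you correctly flag as the delicate core but do not actually resolve.

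Two smaller inaccuracies: the zeros $z_i(\al)$ are strictly decreasing in $\al$ (they tend to $0$ as $\al\to\infty$), so it is the $(k{+}1)$-st zero that comes in from infinity as $\al\downarrow\al_k^+$ is passed, not the $k$-th zero that escapes; and the uniqueness of $\al_k$ is not obtained from $v(z_k,\al)\ne 0$ alone (that only gives $z_k'(\al)<0$) but from the divergence $\lim_{r\to\infty}|v(r)|=\infty$ with a definite sign at a bound state, which forces the node count $\mathcal N(\al)$ to jump there while it stays locally constant at oscillatory solutions; combined with the known existence of a $k$-node bound state for every $k$, this pins down exactly one $\al_k$ per nodal class. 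Your write-up gestures at this but does not identify the divergence of $v$ as the mechanism.
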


We mention that (i) and (iii) are well-known; see \cite{fra, k, mc, tao}. We include them in Theorem \ref{hh2} for a complete {\clb characterization} of \equ{eu}. {\clb Part (i) confirms the uniqueness of ground states of \equ{ble1}}. In 1972, Coffman \cite{coffman} proved {\clb (i) for the special case \equ{ble2} by analyzing the variation of $u$ with respect to $\al$.
McLeod and Serrin \cite{ms81, ms87} proved (i) for \equ{ble1} with $1<p\le n/(n-2)$ for $n=3,\, 4$; $1<p<8/n$ for $4<n<8$,  and Kwong \cite{k} extended it all ranges of $p$ and $n$ in \equ{ble1}. The proof was later simplified and further generalized in \cite{chl, cj, cef, frl, frls, kl, kz, mc, ous, yan}.} A different approach, {\clb known as} the separation technique, was {\clb introduced} by Peletier and Serrin \cite{ps}. This method has been influential in proving the uniqueness of ground states and {\clb extends} naturally to quasilinear elliptic equations \cite{cef, erbe, fls, pus, st}. In \cite{st}, Serrin and Tang  proved that \equ{ble} admits at most one ground state if, for some $b > 0$,

$f$ is continuous in $(0,\infty)$, with $f(u)\le 0$ in $(0,b]$ and $f(u) > 0$ for $u>b$;

$f\in C^1(b, \infty)$ with $uf'(u)/f(u)$ non-increasing on $(b, \infty)$.

\noindent
{\clb These conditions are satisfied by the model nonlinearity in \equ{ble1}} and its extension
\bl{fqp} f(u)=-|u|^{q-1}u+|u|^{p-1}u, \quad 0<q<p. \ee
By a slight modification, the uniqueness was also proved in \cite{st} when $f(u)$ has one extra zero in $(b, \infty)$, which covers
\bl{fge} f(u)=u(u-b)(c-u),\quad 0<b<c,\ee
arisen from population genetics \cite{arw, fisher}, and the important double-power nonlinearity
\bl{stf} f(u)=-u-\lm |u|^{q-1}u+\mu |u|^{p-1}u, \quad 1<p\ne q<\frac{n+2}{n-2}, \quad \lm, \,\mu>0,\ee
often arisen from the corresponding nonlinear Schr\"odinger equations \cite{cof, kit, len, st77}.

{\clb The uniqueness of bound states of $\Delta u+f(u)=0$ is, in general, exceedingly difficult to establish. This difficulty is reflected both in the rare successes achieved in \cite{awy, cls, cgy} and in the early counterexamples demonstrating non-uniqueness of ground states \cite{nn, ps}. Unraveling the intricate influence of $f$ on the solution structure is often far from straightforward. For instance, if $f$ is given by \equ{stf}, then the uniqueness of ground states is ensured by \cite{st}. In contrast, by replacing $\lm>0$  in \equ{stf} with a sufficient negative number, D\'avila, Del Pino and Guerra \cite{ddg} constructed three ground states in $\mathbb{R}^3$ for $\mu=1$, $1<q<\min\{3, p\}$, and $p_0<p<5$ for some $p_0\in (1, 5)$. Nevertheless, we conjecture that Theorems \ref{hh1} and \ref{hh2} for \equ{ble1} can be extended to $n=2$ and $p>1$ for which our proof does not apply. Moreover, we conjecture that if $f$ is given by one of \equ{fqp}-\equ{stf}, then $\Delta u+f(u)=0$ admits at most one $k$-node bound state with its absolute maximum located at the origin of $\mrn$, for any given $k\ge 1$ and $n\ge 2$.}

In Section 2, we {\clb investigate basic properties of radial solutions. To facilitate potential extensions of the present study, these properties are presented for $\Delta u+f(u)=0$ with a general nonlinearity $f$. In Section 3, we introduce two technical conditions sufficient for establishing Theorems \ref{hh1} and \ref{hh2}, while Sections 4 and 5 are devoted to verifying these conditions. Finally, the auxiliary functions and identities employed in our proof, along with their derivations, are provided in the appendix.}

 \section{Basic properties of radial solutions}

{\clb In this section, we recall some known properties of radial solutions and establish the positivity of several auxiliary functions. For conceptual clarity and to support potential extensions, we present these properties for the solution $u$ of
\bl{ble3} u''+\frac {n-1}ru'+f(u)=0,\quad
u(0)=\alpha>0,\quad u'(0)=0, \quad r>0,\ee
with a general nonlinearity $f(u)$. The dimension $n$
may be treated as a real variable ranging over $(1, \infty)$. We begin with the following assumptions on $f$:

(C1) $f\in C^1(\mathbb{R})$, $f(u) < 0$ for $u\in (0,1)$, $f(u) > 0$ for $u\in (1,\infty)$,
 $f(-u) = -f(u)$;

(C2) there is some $\al_*>1$ such that $F(\al_*)=\int_0^{\al_*} f(s)\, ds=0$;

(C3) $\zeta:=-f'(0)>0$;

(C4) $f'(1)>0$.

\noindent
From (C1) and (C2) we see that $f(0)=f(1)=0$ and $F$ has a unique positive zero at $\al_*>1$.
Specifying the unique positive zero of $f$ at $u=1$ is not essential;
the following proof would proceed the same way if ``$1$" is replaced by another positive constant.}

\subsection{{\clb The properties derived from energy estimates.}}
Given $f(1)=0$, the unique solution of {\clb \equ{ble3}} with $\al=1$ is $u\equiv 1$.
We first present a {\it monotone property} of $u(r)\not \equiv 1$ {\clb and estimate its $L^\infty(0, \infty)$ norm} by analyzing the {\it energy function}
\bl{ef} E(r):=\frac {u^{\prime 2}(r)}2+F(u(r)). \ee

\begin{prop}\label{basic1} Let $u(r)\not\equiv 1$ be the solution of {\clb \equ{ble3}. If (C1) and (C2) hold,} then

(i) $E(r)$ decreases strictly in $(0, \infty)$.

(ii) If $\,\widehat r>\overline r>0$ and $|u(\widehat r)|=|u(\overline r)|$, then $|u'( \widehat r)|<|u'(\overline r)|$.

(iii) $\Vert u\Vert_\infty=\sup\{|u(r)|; r\ge 0\}=\al$ if $\al>1$, and
$\al<\Vert u\Vert_\infty<\al_*$ if $\al<1$.
\end{prop}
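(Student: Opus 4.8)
The plan is to derive everything from the energy identity \equ{ep} and the shape of the potential $F$ in \equ{Fb}--\equ{F}. Part (i) is immediate: since $n\ge 3$ and $u\not\equiv 1$, the solution is non-constant, so $u'$ cannot vanish on any interval (by uniqueness for the ODE in \equ{eu}, a point where $u=\alpha_0$ and $u'=0$ forces $u\equiv\alpha_0$, which is impossible unless $\alpha_0=1$); hence $-\frac{n-1}{r}u'^2(r)<0$ except at isolated points and $E$ is strictly decreasing on $(0,\infty)$.

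For part (ii), suppose $\widehat r>\overline r>0$ with $|u(\widehat r)|=|u(\overline r)|$. Since $F$ is even, $F(u(\widehat r))=F(u(\overline r))$, so from the definition \equ{ef} of $E$,
\[
\tfrac12 u'^2(\widehat r)-\tfrac12 u'^2(\overline r)=E(\widehat r)-E(\overline r)<0
\]
by the strict monotonicity in (i); therefore $|u'(\widehat r)|<|u'(\overline r)|$.

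For part (iii), first take $\alpha>1$. I claim $u$ never leaves the interval $(-\alpha,\alpha)$ after $r=0$, and in fact $|u(r)|<\alpha$ for all $r>0$. Indeed $E(0)=F(\alpha)$, and for $r>0$ we have $E(r)<F(\alpha)$, so $\tfrac12 u'^2(r)+F(u(r))<F(\alpha)$. If at some point $|u(r)|=\alpha$, then $F(u(r))=F(\alpha)$ (evenness of $F$), forcing $u'^2(r)<0$, a contradiction; and if $|u(r)|>\alpha$ at some point, then since $F$ is strictly increasing on $(\alpha_*,\infty)$ and $\alpha>1>\alpha_*$... wait, one must be careful: $\alpha$ could lie below $\alpha_*$. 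If $\alpha\in(1,\alpha_*]$ the argument is cleaner since $F$ is increasing in absolute value past $\alpha_*$ only; so instead argue directly: were $\sup|u|>\alpha$, there would be a first $r_1>0$ with $|u(r_1)|>\alpha$ hence (by continuity and $|u(0)|=\alpha$) a point $\overline r\in(0,r_1]$ with $|u(\overline r)|=\alpha$ and $|u|$ increasing through it, so $u'(\overline r)u(\overline r)>0$ and $u'(\overline r)\ne0$; but then $E(\overline r)=\tfrac12 u'^2(\overline r)+F(\alpha)>F(\alpha)=E(0)$, contradicting (i). Hence $\|u\|_\infty=\alpha$, attained at $r=0$. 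Now take $\alpha<1$. Then $F(\alpha)<0=F(0)$ and $E'<0$, so $E(r)<F(\alpha)<0$ for $r>0$; since $E(r)\ge F(u(r))$, we get $F(u(r))<F(\alpha)<0$, which by \equ{F} forces $|u(r)|<\alpha_*$ for all $r$, so $\|u\|_\infty\le\alpha_*$. To see the inequality is strict and that $\|u\|_\infty>\alpha$: at any critical point $c$ of $u$ (and one exists near $r=0$ since $u'(0)=0$, $u''(0)=-f(\alpha)/n>0$ as $f(\alpha)<0$ for $\alpha\in(0,1)$, so $u$ initially increases) we have $E(c)=F(u(c))<F(\alpha)$, and since $F$ is strictly decreasing on $(0,1)$ and strictly increasing on... here $F$ on $(0,\alpha_*)$ is negative with a minimum at $1$; $F(u(c))<F(\alpha)$ with $\alpha<1$ and $F$ decreasing on $(0,1)$ forces $u(c)>\alpha$, giving $\|u\|_\infty>\alpha$; and $F(u(c))<F(\alpha)<0$ keeps $u(c)<\alpha_*$. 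Combining, $\alpha<\|u\|_\infty<\alpha_*$.

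The main obstacle is the bookkeeping in part (iii) when $\alpha<1$: one must pin down that $u$ actually rises above $\alpha$ (not merely that it stays below $\alpha_*$), which requires observing the initial behavior $u''(0)>0$ from the equation together with the fact that $F$ restricted to $(0,\alpha_*)$ is strictly monotone on each side of its minimizer $u=1$. Everything else reduces to reading off signs from \equ{ep}, \equ{Fb}, \equ{F} and the evenness of $F$.
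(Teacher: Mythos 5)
Your overall strategy coincides with the paper's: everything is read off from the strict decrease of $E$ via \equ{ep} together with the shape of $F$ in \equ{Fb}--\equ{F}, and your parts (i) and (ii) are correct (for (i) the essential point, which you have, is that $u'\equiv 0$ on an interval would make $u$ a constant solution and hence $u\equiv 1$). Part (iii), however, has two concrete loose ends. For $\alpha>1$, your hesitation about the monotonicity of $F$ is unfounded: $F'(u)=f(u)=-u+u^{p}>0$ for every $u>1$, so $F$ is strictly increasing on all of $(1,\infty)$, not merely past $\alpha_*$; the ``direct'' argument you abandoned ($F(|u(r)|)\le E(r)<F(\alpha)$ plus monotonicity of $F$ on $[1,\infty)$ gives $|u(r)|<\alpha$ at once) is the clean one, and is what the paper does. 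The last-crossing argument you substituted carries a silent assumption: a crossing point $\overline r\in(0,r_1]$ with $|u(\overline r)|=\alpha$ exists only if $|u|$ does not exceed $\alpha$ immediately to the right of $0$. That is true because $u''(0)=-f(\alpha)/n<0$ when $\alpha>1$, but you never say so, and without it the case $|u|>\alpha$ throughout $(0,r_1)$ is unhandled.

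For $\alpha<1$, your lower bound $\|u\|_\infty>\alpha$ via $u''(0)=-f(\alpha)/n>0$ is fine and matches the paper, but your upper bound is only the pointwise statement $|u(r)|<\alpha_*$, which yields $\|u\|_\infty\le\alpha_*$ and not the strict inequality asserted in the proposition. You already have the uniform estimate $F(u(r))<F(\alpha)<0$ for $r>0$; to convert it into $\|u\|_\infty<\alpha_*$ you should introduce the unique $\widetilde\alpha\in(1,\alpha_*)$ with $F(\widetilde\alpha)=F(\alpha)$ and deduce $|u(r)|<\widetilde\alpha$ for all $r$ (this is the paper's device), or note that $\sup|u|=\alpha_*$ would force $F(u(r_j))\to F(\alpha_*)=0$ along some sequence, contradicting $F(u(r_j))<F(\alpha)<0$. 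Both repairs stay entirely within your framework, but as written the strict bound $\|u\|_\infty<\alpha_*$ is not established.
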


\begin{proof} (i) {\clb Since $E'(r)=-(n-1)u^{\prime 2}(r)/r\le 0$,} $E(r)$ is non-increasing. If (i) were false, then there would be $\widetilde r_2>\widetilde r_1>0$ such that $E(\widetilde r_2)=E(\widetilde r_1)$, which forces
$E(r)\equiv E(\widetilde r_1)$ and $u'\equiv 0$  in $(\widetilde r_1, \widetilde r_2)$.
Consequently, $u(r)$ is a constant on $[\widetilde r_1, \widetilde r_2]$ and is extended
to $u(r)\equiv u(0)=\alpha$ in $[0, \infty)$ by the uniqueness theorem of ODE. It follows from {\clb \equ{ble3} and (C1)} that $f(\alpha)=0$ and $\al=1$, leading to a contradiction of $u\not\equiv 1$.

(ii) It follows from (i) immediately.

(iii) Note that $F(u)$ is an even function that increases in $u>1$ and decreases in $u\in (0, 1)$. For any $r>0$, we find from (i) that $F(|u(r)|)<E(0)=F(\alpha)$. If $\al>1$, then $|u(r)|<\al$ for all $r>0$ and so $\Vert u\Vert_\infty=\al$. Now we assume that $\al\in (0, 1)$.
By taking the limit in {\clb \equ{ble3}}, we find $u''(0)=-f(\al)/n>0$. Hence $u(r)$ increases initially and $\Vert u\Vert_\infty>\al$. Take the unique $\widetilde \al\in (1, \al_*)$ with
$F(\widetilde \al)=F(\alpha)<0$. Then $F(|u(r)|)<F(\widetilde\alpha)$ and $|u(r)|<\widetilde \al$
for all $r>0$. Consequently, $\Vert u\Vert_\infty\le \widetilde \al<\al_*$. \end{proof}

{\clb We next characterize critical points of nodal solutions, as well as the limiting behavior of ground states and bound states.}
Note that a nodal solution cannot have a double zero, since $u(r)=u'(r)=0$ at any $r>0$ would imply $u\equiv 0$ on $[0, \infty)$.

\begin{prop}\label{basic11} {\clb Assume that (C1)--(C2) hold and let $u$ be the solution of \equ{ble3}.}

(i) If $u$ is a ground state or a nodal solution, then $\alpha>\alpha_*$.

(ii) If $u$ is a ground state, then $u'(r)<0$ in $(0, \infty)$.

(iii) If $u$ is a nodal solution with exactly $k\ge 1$ zeros $z_1<z_2<\cdots <z_k$, then $u$ has $k$ critical points in $[0, z_k]$, labelled as $0=c_0<c_1<c_2<\cdots<c_{k-1}$, with $c_i\in (z_i, z_{i+1})$, $1\le i\le k-1$, and $u(0)>|u(c_1)|>\cdots>|u(c_{k-1})|>\alpha_*$. In addition, if
$u$ is a bound state, then there is a unique critical point $c_k>z_k$ with $|u(c_{k-1})|>|u(c_{k})|>\alpha_*$.

(iv) If $u$ is a ground state or a bound state and (C3) holds, then \\
$\dsp  \lim_{r\to \infty}\frac {u'(r)}{u(r)}=-{\clb \sqrt\zeta}$, and {\clb $\dsp \limsup_{r\to\infty} |u(r)|e^{\sqrt{\zeta-\epsilon}\, r}<\infty$ for any $\epsilon \in (0, \zeta)$}.
\end{prop}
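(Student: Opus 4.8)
The whole proposition is an exercise in combining the strict monotonicity of the energy $E$ (Proposition \ref{basic1}(i)) with the shape of $F$ in \eqref{Fb}--\eqref{F} and the sign pattern of $f$, namely $f<0$ on $(0,1)$, $f(1)=0$, $f>0$ on $(1,\infty)$, together with the oddness of $f$. For (i) I would argue by contraposition: if $\alpha\le\alpha_*$ (discarding the trivial case $\alpha=1$, for which $u\equiv 1$ is neither a ground nor a nodal solution), then $E(0)=F(\alpha)\le 0$, so strict decrease forces $E(r)<0$ for all $r>0$; hence $F(|u(r)|)\le E(r)<0$ keeps $0<|u(r)|<\alpha_*$, so $u$ never vanishes, and since $E(r)\le E(1)<0$ for $r\ge 1$ the value $|u(r)|$ stays in a fixed compact subinterval of $(0,\alpha_*)$ bounded away from $0$, whence $u\notin L^2(\mrn)$ and $u$ is not a ground state either. (Alternatively one may quote Proposition \ref{basic1}(iii) for $\alpha<1$ and run this energy argument only on $\alpha\in[1,\alpha_*]$.)

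For (ii), part (i) gives $\alpha>\alpha_*>1$, so $u''(0)=-f(\alpha)/n<0$ and $u$ decreases initially; if $u'$ first vanished at some $c>0$, then $u'<0$ on $(0,c)$ forces $u''(c)\ge 0$, i.e.\ $f(u(c))\le 0$, i.e.\ $u(c)\in(0,1]$, so $F(u(c))<0$ and $E(c)=F(u(c))<0$; then $E$ stays negative beyond $c$, pinning $u(r)$ away from $0$, contradicting $u\in H^1(\mrn)$. For (iii) the organizing observation is that \emph{every} critical point $c$ lying strictly between two consecutive zeros (and, in the bound-state case, strictly beyond the last zero) has $E(c)$ exceeding the energy at the next zero, namely $\tfrac12 u'(z)^2>0$, resp.\ exceeding $\lim_{r\to\infty}E(r)=F(0)=0$; hence $F(|u(c)|)=E(c)>0$, so $|u(c)|>\alpha_*>1$ at any such $c$. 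Therefore $u''(c)=-f(u(c))$ has the sign of a \emph{strict} extremum (a maximum where $u>0$, a minimum where $u<0$, by oddness), so two such critical points inside one sign-interval would force an intermediate extremum of the opposite type, itself a critical point with $|u|>\alpha_*>1$ hence of the first type — contradiction. Rolle's theorem supplies at least one critical point between consecutive zeros (and $u'(z_i)\ne 0$ by the no-double-zero property), while $u''(0)<0$ makes $r=0$ the strict maximum $c_0$ and rules out critical points on $(0,z_1)$. Finally $E(0)=F(\alpha)>E(c_1)=F(|u(c_1)|)>\cdots$, together with $F$ being strictly increasing on $[\alpha_*,\infty)$ and all arguments exceeding $\alpha_*$, yields the chain $u(0)>|u(c_1)|>\cdots>\alpha_*$; the bound-state addendum is the same argument on $(z_k,\infty)$, where $u\to 0$ forces at least one turning point and $E(r)>\lim E=0$ makes it unique.

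For (iv), by (ii)–(iii) and $u\in H^1(\mrn)$ the solution has a fixed sign with $|u|$ eventually strictly decreasing to $0$ (for all $r>0$ if $u$ is a ground state, for $r>c_k$ otherwise); replacing $u$ by $-u$ if necessary, assume $u>0$, $u'<0$, $u\downarrow 0$ on a tail. Since $u''=-\tfrac{n-1}{r}u'+u(1-u^{p-1})>0$ for $r$ large, $u'$ increases to a limit which must be $0$, and $E(r)\to F(0)=0$. From $E(r)>0$ (strict decrease to $0$) and $-2F(u)=u^2\bigl(1-\tfrac{2}{p+1}u^{p-1}\bigr)$ we get $(u'/u)^2>1-\tfrac{2}{p+1}u^{p-1}\to 1$, hence $\liminf_{r\to\infty}(-u'/u)\ge 1$. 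For the reverse inequality, $w:=-u'/u>0$ solves the Riccati equation $w'=w^2-(1-u^{p-1})-\tfrac{n-1}{r}w$; if $w\ge 1+\eta$ at some large $r_0$, then $w'>0$ there, $w$ remains above $1+\eta$, and a comparison $w'\ge\tfrac12 w^2$ (valid once $w$ is large and $r$ is large) drives $w\to\infty$ at a finite radius, i.e.\ $u$ vanishes there — impossible. Thus $\limsup(-u'/u)\le 1$, so $u'/u\to -1$, and integrating $(\ln|u|)'<-(1-\epsilon)$ past some $R_\epsilon$ gives $|u(r)|\le C_\epsilon e^{-(1-\epsilon)r}$.

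The main obstacle is precisely this upper bound $\limsup(-u'/u)\le 1$ in (iv): the energy supplies the lower bound essentially for free, but the upper bound genuinely requires the Riccati blow-up argument (equivalently, constructing an exponential supersolution $Ce^{-(1-\epsilon)r}$ valid for large $r$, which one can do only after establishing $u,u'\to 0$ and $u''>0$). By contrast, the bookkeeping in (iii) — tracking signs across the successive humps, locating one critical point per hump, and dispatching the degenerate possibility $|u|=1$ at a critical point — is lengthy but entirely routine once the principle ``$|u|>\alpha_*$ at every interior critical point'' is in hand, and parts (i)–(ii) are short energy arguments.
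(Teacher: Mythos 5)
Your proposal is correct and follows essentially the same route as the paper: parts (i)--(iii) rest on the strict decrease of $E$ combined with the sign of $F$ and the principle that $|u|>\alpha_*$ at every interior critical point, and part (iv) is the Peletier--Serrin Riccati blow-up argument for $-u'/u$. The only differences are cosmetic --- you prove (i) by contraposition, derive the contradiction in (ii) from negative energy rather than from $E>E_\infty=0$, and in (iv) split the limit into a liminf (from $E>0$) and a limsup (from Riccati blow-up) where the paper bounds $-u'/u<2$ and applies L'H\^opital's rule; in the blow-up step you should note explicitly that $w'$ is bounded below by a positive constant while $w\in[1+\eta,2]$ and $r$ is large, so $w$ does reach the regime where $w'\ge w^2/2$.
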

\begin{proof} (i) If $u$ is a ground state, then $F(\alpha)>E_\infty:=\lim_{r\to \infty}E(r)=0$; if $u$ is a nodal solution with a zero $z_1$, then $F(\alpha)>E(z_1)>0$. Thus $F(\alpha)>0$ and $\alpha>\alpha_*$.

(ii) Let $u$ be a ground state. Then $\alpha>\alpha_*>1$ and $u''(0)=-f(\al)/n<0$. Hence
$u'(r)<0$ for sufficiently small $r>0$. {\clb Should $u$ cease to be decreasing at some point in $(0,\infty)$, then there would exist} $\widetilde c\in (0,\infty)$ such that $u'(r)<0$ in $(0, \widetilde c)$, $u'(\widetilde c)=0$, and $u''(\widetilde c)\ge 0$.
However, $E(\widetilde c)>E_\infty=0$ implies $F(u(\widetilde c))>0$, leading to $u(\widetilde c)>\alpha_*$, and in turn, $u''(\widetilde c)=-f(u(\widetilde c))<0$. This gives a desired contradiction.

(iii) Let $u$ be a nodal solution with zeros $z_1<z_2<\cdots <z_k$.
{\clb By slightly modifying the proof of (ii), we can show that} $u'<0$ in $(0, z_1)$. Let $c_1$ be a critical point in $(z_1, z_{2})$. Then $u(c_1)<0$ and $F(u(c_1))=E(c_1)>E(z_2)>0$. Consequently, $|u(c_1)|>\alpha_*$ and $u''(c_1)=-f(u(c_1))>0$. Hence $u(c_1)$ is a local minimum and {\clb $c_1$ is the only critical point on $[z_1, z_2]$}. As $F(|u(c_1)|)=E(c_1)<F(\al)$,
it follows that $|u(c_1)|<u(0)$. {\clb The remaining statements can be proved by the same reasoning}.

(iv) We follow the elegant proof of Lemma 5 in Peletier and Serrin \cite{ps}.
{\clb Write $c_0=0$ if $u$ is a ground state.} Then $\widehat u(r):=-{u'(r)}/{u(r)}>0$
for $r>c_k$. {\clb According to (C1) and (C3), we have $f(u)/u\to f'(0)=-\zeta<0$ as $u\to 0$. We may thus fix $\hat c>c_k$ such that $f(u(r))/u(r)\ge -1-\zeta$ for $r>\hat c$. Now it follows from \equ{ble3} that}
\bl{psl5} \widehat u'=\widehat u^2-\frac{n-1}r\, \widehat u+\frac{f(u)}{u}\ge {\clb \widehat u^2-\frac{n-1}r\, \widehat u-1-\zeta,\quad r>\hat c}. \ee
Should there be any {\clb $\hat r>\max \{2n, \hat c\}$ such that $\widehat u(\hat r)\ge 2+2\zeta$,} then
\ben \displaystyle 2\widehat u'\ge  \widehat u^2+ {\clb \left( \widehat u^2-\widehat u-2-2\zeta\right)}\ge  \widehat u^2,\quad r=\hat r. \een
These relations would remain valid for $r\ge \hat r$ until $\widehat u$ blows up at a finite later value of $r$, which is impossible.
{\clb Consequently, $\widehat u<2+2\zeta$ for all $r> \max \{2n, \hat c\}$.} Since both $u$ and $u'$ approach zero as $r\to\infty$, we may use ${\rm L'H\widehat opital's}$ rule {\clb and (C3)} to derive
\ben \lim_{r\to\infty}\left(\frac{u'}u\right)^2=\lim_{r\to\infty}\frac{u''}u=
\lim_{r\to\infty} {\clb\left(\frac{n-1}r\, \widehat u -\frac{f(u)}u\right)=\zeta}. \een
{\clb Hence $\widehat u(r)\to \sqrt \zeta$ as $r\to \infty$. For any given $\epsilon\in (0, \zeta)$, we have
$\widehat u(r)>\sqrt{\zeta-\epsilon}$} provided that $r$ is sufficiently large. By integration we can derive the inequality easily.
\end{proof}

\begin{prop}\label{basic12} {\clb Suppose (C1), (C2), and (C4) hold and $u(r)\not\equiv 1$ is the solution of \equ{ble3}.} If $E(\overline r)\le 0$ and $u(\overline r)>0$ at some $\overline r\ge 0$, then $u(r)\in (0, \al_*)$ and it oscillates about $u\equiv 1$ in $(\bar r, \infty)$: There are a sequence of critical points of $u$, $\widetilde c_1<\widetilde c_2<\cdots$, along which $\al_*>u(\widetilde c_1)>u(\widetilde c_3)>\cdots >1$, and $0<u(\widetilde c_2)<u(\widetilde c_4)<\cdots<1$.

If $E(\overline r)\le 0$ and $u(\overline r)<0$ at some $\overline r\ge 0$, then the same can be said for $-u$.
\end{prop}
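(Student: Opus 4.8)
The plan is to analyze the dynamics of $u(r)$ on $(\overline r,\infty)$ purely through the energy function $E(r)$, exploiting its strict monotone decrease from Proposition \ref{basic1}(i) and the shape of $F$ described in \equ{Fb}--\equ{F}. First I would establish that $u$ stays trapped in the strip $(0,\al_*)$: since $E(\overline r)\le 0$ and $E$ strictly decreases, we have $E(r)<0$ for all $r>\overline r$, hence $F(u(r))\le E(r)<0$, which by \equ{F} forces $|u(r)|<\al_*$ for $r>\overline r$. To see $u$ stays positive, note a sign change would require $u=0$ at some point, where $F(0)=0$ and so $E=\tfrac12 u'^2\ge 0$, contradicting $E<0$; thus $u(r)\in(0,\al_*)$ throughout $(\overline r,\infty)$, and in fact $u$ is bounded away from $0$ since $F(u(r))<0$ bounded above by $E(r)$ which is itself bounded above by the (negative) value $E(\overline r^{\,+})$ if $\overline r>0$, or one argues via $E$ at the first critical point past $\overline r$.

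Next I would produce the sequence of critical points. Because $u$ is bounded in $(0,\al_*)$ and does not converge to a constant (a constant solution must be $u\equiv 1$, excluded; more precisely $u\equiv 1$ would give $E\equiv F(1)=-\tfrac{p-1}{2(p+1)}<0$, but then $u'\equiv 0$ contradicts $E$ strictly decreasing unless we are already at that forbidden constant), the derivative $u'$ must change sign infinitely often, yielding critical points $\widetilde c_1<\widetilde c_2<\cdots\to\infty$. At each $\widetilde c_j$ we have $u'(\widetilde c_j)=0$, so $E(\widetilde c_j)=F(u(\widetilde c_j))$, and the equation \equ{eu} gives $u''(\widetilde c_j)=-f(u(\widetilde c_j))$; since $f(s)=s(|s|^{p-1}-1)$ is negative for $s\in(0,1)$ and positive for $s\in(1,\al_*)$, each critical point in $(0,1)$ is a strict local minimum and each in $(1,\al_*)$ is a strict local maximum. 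Hence the critical points alternate between the two sub-intervals, so (after discarding an initial transient if $u(\overline r)$ itself is not yet oscillating, or relabelling) the odd-indexed ones lie in $(1,\al_*)$ and are local maxima, the even-indexed ones lie in $(0,1)$ and are local minima. One should check $\widetilde c_1$ indeed lies in $(1,\al_*)$: if $u(\overline r)\in(0,\al_*)$ with $E(\overline r)\le 0$, then either $u$ is already decreasing, in which case it descends until it would have to turn at a minimum in $(0,1)$ — but then $F$ at that minimum is negative, consistent; or $u$ increases, reaching a max in $(1,\al_*)$. A short case analysis using the sign of $u''(0)$-type reasoning (here the sign of $u'(\overline r)$ and $f(u(\overline r))$) pins down the pattern; since the statement only claims the existence of the alternating sequence, we may start the labelling at the first critical point lying in $(1,\al_*)$.

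Finally I would get the strict monotonicity $\al_*>u(\widetilde c_1)>u(\widetilde c_3)>\cdots>1$ and $0<u(\widetilde c_2)<u(\widetilde c_4)<\cdots<1$ from strict decrease of $E$ together with injectivity of $F$ on each branch. Indeed $E(\widetilde c_1)>E(\widetilde c_3)>\cdots$ gives $F(u(\widetilde c_1))>F(u(\widetilde c_3))>\cdots$; since $F$ is strictly increasing on $(1,\al_*)$, this yields $u(\widetilde c_1)>u(\widetilde c_3)>\cdots$, all exceeding $1$ because $F$ attains its minimum only at $u=1$ while these values have $F<E(\overline r^{\,+})$-type bounds — more carefully, $F(u(\widetilde c_j))=E(\widetilde c_j)$ and one checks $E(\widetilde c_j)$ stays strictly above the minimum value $F(1)$, since equality $E=F(1)$ with $u'=0$ and $u=1$ would again force the forbidden constant solution by the ODE uniqueness argument of Proposition \ref{basic1}(i). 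Likewise $F$ strictly decreasing on $(0,1)$ turns $F(u(\widetilde c_2))>F(u(\widetilde c_4))>\cdots$ into $u(\widetilde c_2)<u(\widetilde c_4)<\cdots$, all below $1$. The last clause, for $u(\overline r)<0$, follows by applying the result to $-u$, which solves \equ{eu} since $f$ is odd.

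The main obstacle I anticipate is not any single hard estimate but rather the bookkeeping at the \emph{start} of the sequence: verifying that infinitely many critical points actually exist (ruling out, e.g., $u$ monotonically approaching a limit $\ell$, which one excludes because $\ell$ would have to be a zero of $f$ in $(0,\al_*)$, i.e. $\ell=1$, and then $E(r)\to F(1)$ forces $u'\to 0$ and a standard linearization/energy argument shows convergence to the \emph{unstable} equilibrium $u\equiv 1$ is impossible for a non-constant solution) and that the first one sits on the correct branch so the alternation is as stated. Everything else is a direct consequence of "$E$ strictly decreasing" plus "$F$ strictly monotone on each of $(0,1)$ and $(1,\al_*)$."
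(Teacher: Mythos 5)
Your trapping argument ($E(r)<0$ for $r>\overline r$ forces $u\in(0,\al_*)$, since $u=0$ or $|u|\ge\al_*$ would give $E\ge 0$) and your derivation of the strict chains $u(\widetilde c_1)>u(\widetilde c_3)>\cdots>1$ and $u(\widetilde c_2)<u(\widetilde c_4)<\cdots<1$ from the strict decrease of $E$ plus the strict monotonicity of $F$ on $(0,1)$ and $(1,\al_*)$ are both correct and match the paper. The gap is in the central step: producing infinitely many critical points. You argue ``$u$ is bounded and does not converge to a constant, hence $u'$ changes sign infinitely often,'' but your parenthetical only rules out $u$ being \emph{identically} constant; it does not rule out $u$ \emph{converging} to a constant. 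In fact the oscillatory solutions of this very proposition do converge to $1$ as $r\tin$ (the energy dissipated by the damping term forces the oscillation amplitude to zero), so ``does not converge to a constant'' is a false premise and cannot be the engine of the proof. What must actually be excluded is that $u$ eventually stays on one side of $u=1$; since $f<0$ on $(0,1)$ and $f>0$ on $(1,\al_*)$, one-sidedness forces eventual monotonicity and hence monotone convergence to $1$ (the limit $0$ being excluded by $E_\infty<0$), and it is this monotone approach to $1$ that must be contradicted.

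Your fallback for that step --- ``a standard linearization/energy argument shows convergence to the unstable equilibrium $u\equiv 1$ is impossible'' --- does not work as stated. The equilibrium $u=1$ is not unstable: $f'(1)=p-1>0$ makes the linearization $w''+\frac{n-1}{r}w'+(p-1)w=0$ oscillatory, so $u=1$ is an attracting spiral for the damped flow, and solutions do converge to it; moreover $E(r)\downarrow F(1)$ with $u'\to 0$ is perfectly consistent with monotone approach, so no energy argument alone can give the contradiction. The correct mechanism is oscillation of the linearized equation: a solution cannot approach $1$ while remaining on one side of it. The paper implements this via the Berestycki--Lions--Peletier substitution $\widetilde u=r^{(n-1)/2}(u-1)$, which satisfies \equ{blp2}; if $u\uparrow 1$ from below then $\widetilde u<0$ while $\widetilde u''\ge \frac{p-1}{2}|\widetilde u|>0$ for large $r$, and a short convexity argument forces $\widetilde u\to+\infty$ or $\widetilde u'\to+\infty$, either way a contradiction. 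A Sturm comparison with $w''+cw=0$, $0<c<p-1$, would serve equally well, but some such oscillation argument is indispensable and is missing from your proposal.
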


\begin{proof} Assume that $E(\overline r)\le 0$ and $u(\overline r)>0$. Then $E(r)< 0$ for all $r\in (\overline r, \infty)$, over which $u$ must stay within $(0, \al_*)$ because $E(r)\ge 0$ whenever $u(r)=0$ or $u\ge \al_*$. To verify its oscillatory behavior, we follow the approach of Berestycki, Lions and Peletier \cite{blp} and apply the
transformation $\widetilde u(r)=r^{\frac{n-1}2}[u(r)-1]$. Then{\footnote{ {\clb Notably, \equ{blp2} simplifies significantly when $n=3$}, which partially explains why the classic approach of Coffman \cite{coffman} for $n=3$ was not easily extended to higher dimensions $n>3$.}}
\bl{blp2} \widetilde u''= -\,\left\{\frac{f(u)}{u-1}-\frac{(n-1)(n-3)}{4r^2}\right\}\widetilde u.\ee

Suppose to the contrary that there exists $\overline r_1>\overline r$ such that $u(r)\in (0, 1)$ for all $r> \overline r_1$. Since $f(u)<0$ for $u\in (0, 1)$, $u(r)$ could have at most one critical point, necessarily a local minimum, in $(\overline r_1, \infty)$. By {\clb increasing} $\overline r_1$ if necessary, we may assume that $u$ is monotone for $r\ge \overline r_1$ and converges to a limit $u^*\in [0, 1]$. {\clb Then $u', u'' \to 0$ as $r\tin$ and \equ{ble3} implies} $f(u^*)=0$. Since $E_\infty<0$, $u^*\ne 0$. Hence $u(r) \uparrow u^*=1$ over $(\overline r_1, \infty)$. Note that $\widetilde u(r)<0$ for $r>\overline r_1$ and {\clb $f(u)/(u-1)\to f'(1)>0$ as $u\to 1$ by (C1) and (C4)}. We may further increase $\overline r_1$ and apply \equ{blp2} to find that $\widetilde u''>0$ for $r>\overline r_1$.
Thus $\widetilde u'(r)\uparrow u_p^*$ as $r\to\infty$ for some $u_p^*\le \infty$. Now if $u_p^*>0$, including $u_p^*=\infty$, then $\widetilde u(r)\to \infty$, which contradicts $\widetilde u(r)<0$ in $(\overline r_1, \infty)$. If $u_p^* \le 0$, then $\widetilde u(r)\le \widetilde u(\overline r_1)<0$ for $r\ge \overline r_1$, {\clb and
$\widetilde u''(r)\ge -f'(1)\widetilde u(\overline r_1)/2>0$ for $r$ sufficiently large. This leads to $\widetilde u'(r) \uparrow\infty$ as $r\to\infty$, which contradicts our assumption that $u_p^* \le 0$}.

Now we see that $u(r)$ cannot stay entirely in $(0, 1)$ for all $r> \overline r_1$. By a similar argument, we can show that it cannot stay entirely in $(1, \al_*)$ either.
Hence $u(r)$ switches between $(0, 1)$ and $(1, \al_*)$ infinitely often by crossing the line $u=1$, with $|u'|>0$ at each crossing point. There is exactly one critical point between two consecutive crossings. By labeling these critical points appropriately and using the strict monotonicity of $E(r)$, we can make $\al_*>u(\widetilde c_1)>u(\widetilde c_3)>\cdots >1$ and  $0<u(\widetilde c_2)<u(\widetilde c_4)<\cdots<1$.

The last statement follows by reflection since $f(u)$ is an odd function.
\end{proof}

{\clb In contrast to the oscillation around $1$ or $-1$ just described, our next result shows that $u$ has at most finitely many zeros.}

\begin{prop}\label{basic13} {\clb Assume that (C1)--(C4) hold and let $u(r)$ be the solution of \equ{ble3}.

(i) If $\al\le \al_*$ and $\al\ne 1$, then $u$ is positive and oscillates around $1$ in $(0,\infty)$.

(ii) A nodal solution $u$ has only finitely many sign changes.

(iii) If $u$ is a ground state or a bound state, then $E(r)>0$ for all $r>0$.

(iv) A positive solution $u$ is either a ground state with $u\downarrow 0$ exponentially as $r\tin$, or an oscillatory function that oscillates about $1$ behind its last zero.
A nodal solution $u$ is either a bound state with $|u|\downarrow 0$ exponentially as $r\tin$, or an oscillatory function that oscillates about $1$ or $-1$ behind its last zero.}
\end{prop}

\begin{proof} {\clb (i) This follows immediately from Prop.\,\ref{basic12}, since $E(0)\le 0$. }

(ii) {\clb Since $E(r)$ decreases strictly in $(0,\infty)$, and $F(1)<0$ is the absolute minimum of $F$, it follows that ${u^{\prime 2}(r)}<2\left[E(0)-F(u(r))\right]$ and}
\bl{upb} |u'(r)|<D_{\alpha}:=\sqrt{2\left[F(\alpha)-F(1)\right]}, \qquad r>0.\ee
 Suppose for contradiction that $u(r)$ has infinitely many zeros. Between any pair of these zeros, there must be a critical point at which $|u|>\al_*$. {\clb We see from \equ{upb} that these zeros} do not accumulate at any finite limit. We may thus list these zeros in a sequence
$z_1<z_2<\cdots$ with $\lim_{i\tin} z_i=\infty$ and
\bl{eib} E(r)>E_{\infty}= \lim_{r\to\infty}E(r)=\lim_{i\to\infty}E(z_i)\ge 0,\qquad r>0.\ee
{\clb As in Prop.\,\ref{basic11} (iii), $u$ has a unique critical point, denoted by $c_i$, in $(z_i, z_{i+1})$, $i\ge 1$.}

Assume first that $E_{\infty}>0$. We adapt the approach of Dolbeault, Garc\'ia-Huidobro and Man\'asevich (\cite{dgm}, Prop. 3.2). A major step is to show that {\clb $c_i-c_{i-1}$ is bounded above by a constant depending on $\al$. Denote by $b_i, \overline b_i \in (c_{i-1}, c_{i})$ the unique numbers such that $b_i<\overline b_i$ and $|u(b_i)|=|u(\overline b_i)|=\alpha_*$. Then $F(u(r))\le 0$ and}
\bl{upl0} |u^{\prime }(r)|=\sqrt{2\left[E(r)-F(u)\right]}\ge \sqrt{2E(r)}>\sqrt{2E_\infty}>0,
\quad r\in [b_i, \overline b_i].  \ee
The mean value theorem gives
\bl{upl}  \frac{2\alpha_*}{\overline b_i-b_i}=\frac{|u(\overline b_i)-u(b_i)|}{\overline b_i-b_i}
>\sqrt{2E_\infty}\quad \Rightarrow \quad \overline b_i-b_i<\frac{\sqrt{2}\alpha_*}{\sqrt{E_\infty}} .\ee
 To estimate $b_i-c_{i-1}$ and $c_i-\overline b_i$, we choose a large integer $i_0$ such that
\bl{ci1b} c_{i-1}>z_{i-1}>\frac{2(n-1)D_{\alpha}}{f(\alpha_*)}>0,\qquad i\ge i_0.\ee
For each $r\in (c_{i-1}, b_i]\cup [\overline b_i, c_i]$, we have $|f(u(r))|\ge f(\alpha_*)$ and
\ben |u''(r)|=\left|f(u)+\frac {n-1}ru'\right|>f(\alpha_*)-\frac {n-1}{c_{i-1}}D_{\alpha} >\frac{f(\alpha_*)}2,\qquad i\ge i_0. \een
{\clb Applying the mean value theorem to $u'(r)$ on the interval
$[c_{i-1}, b_i]$ , we deduce that}
\bl{bicib} \frac {|u'(b_i)|}{b_i-c_{i-1}}=\frac {|u'(b_i)-u'(c_{i-1})|}{b_i-c_{i-1}}
>\frac{f(\alpha_*)}2\quad \Rightarrow \quad b_i-c_{i-1}<\frac {2D_\alpha}{f(\alpha_*)}, \,\, i\ge i_0. \ee
{\clb A similar argument over $[\overline b_i, c_i]$ leads to}
\bl{bicib2}  {\clb c_i-\overline b_i<\frac {2D_\alpha}{f(\alpha_*)},} \qquad i\ge i_0.\ee
Combining these estimates with \equ{upl}, we conclude that, for $i\ge i_0$,
\ben c_i-c_{i-1}=(c_i-\overline b_i)+(\overline b_i-b_i)+(b_i-c_{i-1})<
C_\alpha:=\frac {4D_\alpha}{f(\alpha_*)}+ \frac{\sqrt{2}\alpha_*}{\sqrt{E_\infty}}.\een
 It implies that $\overline b_i\le \overline b_{i_0}+2(i-i_0)C_\alpha$ for $i> i_0$. By integration,
\ben  \frac {E(0)}{n-1}&>& \frac {E(c_{i_0-1})-E_{\infty}}{n-1}=\int_{c_{i_0-1}}^\infty \frac {u^{\prime 2}(r)}r\,dr
\qquad \quad {\rm from} \,\,\equ{ep}
\\
&>& \sum_{i=i_0}^\infty \int_{b_{i}}^{\overline b_i} \frac {u^{\prime 2}(r)}r\,dr
>\sqrt{2E_\infty}\sum_{i=i_0}^\infty \frac 1 {\overline b_i}\int_{b_{i}}^{\overline b_i}  |u'(r)|\,dr  \quad {\rm from} \,\,\equ{upl0} \\
&=&  {2\alpha_*}\sqrt{2E_\infty}\sum_{i=i_0}^\infty \frac 1{\overline b_i}
\ge {2\alpha_*}\sqrt{2E_\infty}\sum_{j=0}^\infty \frac 1{\overline b_{i_0}+2jC_\alpha}.\een
This leads to a contradiction since the last series diverges to $\infty$.

{\clb Assume next that $E_{\infty}=0$. Then $E(r)\downarrow 0$ as $r\to \infty$ and
$\lim_{i\to\infty}F(u(c_i))=0$.} Following
Cort\'azar, Garc\'ia-Huidobro and Herreros (\cite{cgh}, Appendix A), {\clb we use
\bl{H} \widehat E'(r)=2(n-1)r^{2n-3}F(u), \qquad {\rm where}\quad \widehat E(r):=r^{2(n-1)}E(r),\ee
to estimate $\widehat E(z_i)-\widehat E(z_{i-1})$. Denote by $r_i,  r_{1i}\in (c_{i-1}, z_{i})$ the unique numbers such that
$|u(r_i)|=1$ and $|u(r_{1i})|=1/2$. Then $z_{i-1}<\overline b_{i-1}<c_{i-1}<b_i<r_i<r_{1i}<z_i$ for $i>1$.} Within the interval $(z_{i-1}, z_{i})$, {\clb we have $F(u(r))>0$ for $r\in (\overline b_{i-1}, b_i)$ and $F(u(r))\le 0$ otherwise. Integrating \equ{H} gives}
\ben \frac{\widehat E(z_i)-\widehat E(z_{i-1})}{ 2(n-1)}
< \int_{\overline b_{i-1}}^{b_i}r^{2n-3}F(u(r))\, dr+\int_{r_i}^{r_{1i}}r^{2n-3}F(u(r))\, dr.
\een
The mean value theorem and \equ{upb} imply that $r_{1i}-r_i>1/(2D_\alpha)$. {\clb Now we let $i$ be sufficiently large.} Then \equ{bicib} and \equ{bicib2} imply that
\ben b_i-\overline b_{i-1}=(b_i-c_{i-1})+(c_{i-1}-\overline b_{i-1})< {4D_\alpha}/{f(\alpha_*)}.\een
{\clb Note that $F(u(r))$ is maximized at $c_{i-1}\in (\overline b_{i-1}, b_i)$,} $\max_{s\in[1/2, 1]}F(s)=F(1/2)<0$, and $\lim_{i\to\infty} F(u(c_i))=0$. We continue to estimate
\ben \frac{\widehat E(z_i)-\widehat E(z_{i-1})}{ 2(n-1)}
&<& {r_i}^{2n-3}\left[(b_i-\overline b_{i-1})F(u(c_{i-1}))+(r_{1i}-r_i)F(1/2)\right] \\
&<& {r_i}^{2n-3}\left[ \frac {4D_\alpha}{f(\alpha_*)}F(u(c_{i-1}))+\frac{F(1/2)}{2D_\alpha}\right]
< {r_i}^{2n-3}\cdot \frac{F(1/2)}{4D_\alpha}.
\een
This leads to $\lim_{i\to\infty}\widehat E(z_i)=-\infty$, which contradicts $\widehat E(r)>0$ in $(0,\infty)$.

{\clb (iii) This follows immediately from (ii), Prop.\,\ref{basic1} (i), and Prop.\,\ref{basic11} (iv).}

(iv) {\clb Assume first that $u$ is a nodal solution. According to (ii), we may let $z_k>0$ be the largest zero of a nodal solution $u$.}
If $E(r)\le 0$ at some $r>z_k$, then Prop.\,\ref{basic12} implies that $u$ oscillates about $1$ or $-1$ in $(z_k, \infty)$. If $E(r)>0$ for all $r> z_k$, then $u(r)$ has no critical points at which $|u|\le \al_*$. Thus $u$ has at most one critical point {\clb in $(z_k, \infty)$}, and is monotone for sufficiently large $r$. Clearly, $u_\infty=\lim_{r\tin} u(r)$ is finite in view of Prop.\,\ref{basic1}, and $f(u_\infty)=0$. Since $E(r)>0$ for all $r>0$, $u_\infty$ is neither $1$ nor $-1$. Thus $u_\infty=0$ and Prop.\,\ref{basic11} (iv) implies that $u$ is a bound state with $|u|\downarrow 0$ exponentially as $r\tin$. {\clb By replacing $z_k$ with $0$ and repeating the proof, we obtain the conclusion for a positive solution $u$}. \end{proof}

\subsection{Phases and labels}
 Assume {\clb that (C1)--(C2) hold and that $u$ is a bound state
with exactly $k$ zeros $z_1<\cdots <z_k$}. If follows from Prop.\,\ref{basic11}\,(iii) that $u$
has $k+1$ critical points $0=c_0<c_1<\cdots<c_k$. We decompose $(0, \infty)$ into $k$ phases and a semi-tail phase separated by these critical points:
\bl{ph1} (0,\infty)=\underbrace{(0,c_1]}_{{\rm Phase}\, 1}\cup\overbrace{(c_1, c_2]}^{{\rm Phase}\, 2}\cup\cdots\cup\underbrace{(c_{i-1}, c_i]}_{{\rm Phase}\, i}\cup\cdots\cup\overbrace{(c_{k-1}, c_k]}^{{\rm Phase}\, k}\cup \underbrace{(c_{k}, \infty)}_{\rm Semi-tail\,\, phase}. \ee
{\clb If $u$ is a nodal solution with exactly $k$ zeros and oscillates in $(z_k, \infty)$}, then
we make a similar decomposition that differs from \equ{ph1} only in the last two intervals:
\bl{ph2} (0,\infty)=\underbrace{(0,c_1]}_{\rm Phase\, 1}\cup\overbrace{(c_1, c_2]}^{\rm Phase\, 2}\cup\cdots\cup\underbrace{(c_{i-1}, c_i]}_{{\rm Phase}\, i}\cup\cdots\cup\overbrace{(c_{k-1}, z_k]}^{{\rm Semi-phase}\, k}\cup \underbrace{(z_{k}, \infty)}_{\rm Tail\,\, phase}. \ee
 The function $u$ decreases in Phase $i$ if $i$ is odd, and increases in Phase $i$ if $i$ is even.

Within a single phase $(c_{i-1}, c_i]$, $u$, $f(u)$, and $F(u)$ all change signs. We label the points where $F(u)=0$ or $f(u)=0$ besides $r=z_i$ as follows:
\begin{equation}\label{biri}
\begin{cases}
|u(b_i)|=|u(\overline b_i)|=\alpha_*, \quad |u(r_i)|=|u(\overline r_i)|=1, \\
c_{i-1}<b_i<r_i<z_i<\overline r_i<\overline b_i<c_i.
\end{cases}
\end{equation}

We use these numbers to decompose Phase $i$ into subsets in which $u$, $f(u)$, and $F(u)$ remain the same sign. For an odd number $i$,
\bl{oi} (c_{i-1}, c_i]&=&\overbrace{\underbrace{(c_{i-1},b_i)}_{f,\, F>0}\cup\underbrace{\{b_i\}}_{ F=0}\cup\underbrace{(b_{i},r_i)}_{f>0,\, F<0}\cup\underbrace{\{r_i\}}_{f=0}\cup\underbrace{(r_i,z_i)}_{f,\, F<0}}^{u>0}\cup\underbrace{\{z_i\}}_{u, f, F=0}\nonumber \\
&& \cup\overbrace{\underbrace{(z_i,\overline r_i)}_{f>0,\, F<0}\cup\underbrace{\{\overline r_i\}}_{ f=0}\cup\underbrace{(\overline r_i, \overline b_i)}_{f,\, F<0}\cup\underbrace{\{\overline b_i\}}_{F=0}\cup\underbrace{(\overline b_i, c_i]}_{f<0,\, F>0}}^{u<0},\ee
and for an even number $i$,
\bl{ei} (c_{i-1}, c_i]&=&\overbrace{\underbrace{(c_{i-1},b_i)}_{f<0,\, F>0}\cup\underbrace{\{b_i\}}_{ F=0}\cup\underbrace{(b_{i},r_i)}_{f,\, F<0}\cup\underbrace{\{r_i\}}_{f=0}\cup\underbrace{(r_i,z_i)}_{f>0,\, F<0}}^{u<0}\cup\underbrace{\{z_i\}}_{u, f, F=0}\nonumber \\
&& \cup\overbrace{\underbrace{(z_i,\overline r_i)}_{f,\,F<0}\cup\underbrace{\{\overline r_i\}}_{ f=0}\cup\underbrace{(\overline r_i, \overline b_i)}_{f>0,\, F<0}\cup\underbrace{\{\overline b_i\}}_{F=0}\cup\underbrace{(\overline b_i, c_i]}_{f,\, F>0}}^{u>0}.\ee

\subsection{Positivity of energy-type functions} The Pohozaev function associated with equation \equ{ble3} is defined by
\bl{P} P(r):=2r^nE(r)+(n-2)r^{n-1}uu'=r^n\left[u^{\prime 2}+2F(u)\right]+(n-2)r^{n-1}uu'.\ee
{\clb We now establish the positivity of $P(r)$ and two related functions}
\bl{P1} P_1(r):=r^n\left[u^{\prime 2}+uf(u)\right]+(n-2)r^{n-1}uu',\ee
and
\bl{P2} P_2(r):= r^n\left[u^{\prime 2}+\frac {n-2}n uf(u)\right]+(n-2)r^{n-1}uu'.\ee
{\clb Beyond (C1)--(C4), we introduce the following further assumptions on $f$:

 (C5) There is $\al^*>0$ such that $h(u):=2nF(u)-(n-2)uf(u)<0$ for $u\in (0, \al^*)$, and
 $h(u)>0$ for $u>\al^*$;

 (C6) $h_1(u):=uf(u)-2F(u)\ge 0$;

 (C7) there is $\widetilde \al>0$ such that $h_2(u):=(n+2)f(u)-(n-2)uf'(u)<0$ for $u\in (0, \widetilde \al)$, and $h_2(u)>0$ for $u>\widetilde \al$;
 
{\clr (C8) $f(u)/(uf'(u))$ is increasing for $u >1$.}
 
Under the basic assumptions (C1) and (C2), we have $h(\al_*)<0$. Thus (C5) implies
 the important relation $\al^*>\al_*$. Note also that $f'(0) < 0$ implies $h_2(u)<0$ for small $u >0$, indicating that conditions (C3) and (C7) are consistent.}  {\clr If (C1) holds and $f'(u)> 0$ in $(1, \infty)$, then (C8) is equivalent to the Serrin-Tang condition that $uf'(u)/f(u)$ is non-increasing in $(1, \infty$).}

\begin{prop}\label{basic2}  {\clb Assume that (C1)--(C3) hold and let $u(r)$ be the solution of \equ{ble3}. Then the following statements are valid for $r\in (0, z_k]$ if $u$ is a nodal solution with the largest zero $z_k>0$, or for all $r>0$ if $u$ is a ground state or a bound state.

(i) Assume that (C5) holds.  Then $P(r)>0$; in addition, if $u$ is a ground state or a nodal solution, then $u(0)>\al^*$.

(ii) If (C5) and (C6) hold, then $P_1(r)>0$, and $\omega'(r)>0$ as long as $u(r)\ne 0$, where
$\omega(r):=-ru'(r)/u(r)$.

(iii) If (C7) holds, then $P_2(r)>0$ and $-[P(r)/r^n]'>0$.}
\end{prop}
\begin{proof} {\clb We first note that if $u$ is a nodal solution with zeros $z_1<\cdots<z_k$, then $P(z_i)=P_1(z_i)=P_2(z_i)=z_i^nu'^2(z_i)>0$, $1\le i\le k$. Moreover, if $u$ is a ground state or a bound state, then $|u(r)|$ decays exponentially, while $P(r)$, $P_1(r)$, and $P_2(r)$ all approach zero as $r\to \infty$.}

(i) {\clb Assume that (C5) holds. We first show that if $u$ is a ground state or a nodal solution, then $u(0)>\al^*$.} Indeed, if $u$ is a ground state {\clb but} $u(0)\le \al^*$, then $u(r)<\al^*$ for all $r>0$ by Prop.\,\ref{basic11} (ii). Consequently,
{\clb $h(u(r))<0$ and identity \equ{Pr} in the appendix implies that
$P'(r)=r^{n-1}h(u)<0$ for all $r>0$}, which contradicts $P(0)=0$ and $\lim_{r\to \infty}P(r)=0$.
Similarly, if $u$ is a nodal solution {\clb but} $u(0)\le \al^*$, then
$P'(r)<0$ in $(0, z_1)$, which contradicts $P(0)=0$ and $P(z_1)>0$.

Corresponding to a ground state that starts from $u(0)>\al^*$, $P(r)$ increases from $P(0)=0$ until reaching the absolute maximum value where $u=\al^*$, and then decays to zero as $r\tin$. Hence $P(r)>0$ in $(0, \infty)$. Similarly, if $u$ is a nodal solution, then $P(r)$ increases initially and then decreases to $P(z_1)>0$, so $P(r)>0$ in $(0, z_1]$. On $[z_1, c_1]$, $uu'\ge 0$ so \equ{P} gives $P(r)\ge 2r^nE(r)>0$; {\clb see Prop.\,\ref{basic13} (iii)}. In $(c_1, z_2)$, $P(r)$ either decreases when $|u(c_1)|\le \al^*$, or increases first and then decreases when $|u(c_1)|> \al^*$; in both cases $P(r)>0$ since it is positive at the two ends $c_1$ and $z_2$. On $[z_2, c_2]$, the discussion is the same as on $[z_1, c_1]$ because $uu'\ge 0$ again.
The proof that $P(r)>0$ in the remaining intervals can be completed using nearly identical arguments.

(ii) {\clb If (C5)-(C6) hold, then $P_1(r)=P(r)+r^nh_1(u)\ge P(r)>0$. It follows from
\equ{omegap} that $\omega'(r)={P_1(r)}/(r^{n-1}u^{2})>0$ as long as $u(r)\ne 0$.}

(iii) {\clb Assume that (C7) holds. If $u$ is a ground state but $u(0)\le \widetilde \al$, then the identity $nP_2'(r)=-\,{r^nu'}h_2(u)$ in \equ{mmyp2r} implies that $P_2'(r)<0$ in $(0, \infty)$, which contradicts $P_2(0)=0$ and $\lim_{r\to \infty}P_2(r)=0$. Hence $u(0)>\widetilde \al$, and $P_2(r)$ increases from $P_2(0)=0$ until reaching its unique maximum where $u=\widetilde \al$, and then decays to zero as $r\to\infty$, which confirms $P_2>0$ in $(0,\infty)$. Similarly, if $u$ is a nodal solution, then $u(0)>\widetilde \al$ and $P_2(r)>0$ in $(0, z_1]$. To continue, we note that $nP_2(c_i)=(n-2)c_i^n u(c_i)f(u(c_i))>0$ at each critical point $c_i$.} In $(z_1, c_1)$, $uu'>0$; thus $P_2$ increases
in this interval when {\clb $|u(c_1)|\le \widetilde \al$}, or increases first and then decreases to $P_2(c_1)>0$ otherwise. In $(c_1, z_2)$, $P_2$ decreases from $P_2(c_1)>0$ to $P_2(z_2)>0$
when {\clb $|u(c_1)|\le \widetilde \al$}, or increases first and then decreases to
{\clb $P_2(z_2)>0$ otherwise}. Thus $P_2>0$ on $[z_1, z_2]$. Repeating these arguments shows that $P_2(r)>0$ in the remaining intervals. {\clb Finally,
it follows directly from \equ{prn} that $-[P(r)/r^n]'= n P_{2}(r)/r^{n+1} >0$.}  \end{proof}

\subsection{The concavity}

\begin{prop}\label{basic21} Assume that (C1)--(C6) {\clr and (C8) hold.} Then a ground state $u$ of \equ{ble3} changes concavity at a unique point of inflection in $(0, \infty)$. For a nodal solution $u$ of \equ{ble3}, there is a unique point of inflection between each critical point $c_{i-1}$ and its nearest zero $z_i>c_{i-1}$.
\end{prop}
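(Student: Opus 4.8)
The plan is to study the sign of $u''$ via the equation $u'' = -\frac{n-1}{r}u' - f(u)$ together with the monotonicity information already established. First consider a ground state $u$. By Proposition \ref{basic11}(ii), $u' < 0$ on $(0,\infty)$, and $u(0) = \alpha > \alpha^* > \alpha_* > 1$. Near $r = 0$ we have $u'' (0) = -f(\alpha)/n < 0$, so $u$ is initially concave. As $r \to \infty$, Proposition \ref{basic11}(iv) gives $u'/u \to -1$ and $u \downarrow 0$, so eventually $u \in (0,1)$, where $f(u) < 0$; then $u'' = -\frac{n-1}{r}u' - f(u) > 0$ for $r$ large since both $-\frac{n-1}{r}u'$ and $-f(u)$ are positive. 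Hence $u$ changes concavity at least once. To see the inflection point is unique, I would show $u''$ can vanish only once. At a point $\rho$ where $u''(\rho) = 0$ we have $f(u(\rho)) = -\frac{n-1}{\rho}u'(\rho) > 0$, so $u(\rho) > 1$ (since $f > 0$ exactly when $|u| > 1$ and here $u > 0$). Differentiating the relation $r^{n-1}u'' = -(n-1)r^{n-2}u' - r^{n-1}f(u)$, or more cleanly using \equ{rnup}: at such a $\rho$, $(r^{n-1}u')'(\rho) = -\rho^{n-1}f(u(\rho)) < 0$. The function $g(r) := r^{n-1}u'(r)$ satisfies $g(0) = 0$, $g < 0$ on $(0,\infty)$, and $g'(r) = r^{n-1}u''(r)$, so the inflection points of $u$ in $(0,\infty)$ are exactly the critical points of $g$. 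At every such critical point $g' = 0$ and $g'' = (r^{n-1}u'')' = (r^{n-1}u')'\cdot\frac{u''}{u'} + \ldots$ — rather than push this, the cleaner route is: $g'(\rho) = 0 \Rightarrow u''(\rho) = 0 \Rightarrow f(u(\rho)) > 0 \Rightarrow u(\rho) > 1$; and since $u$ is strictly decreasing, the set $\{r : u(r) > 1\}$ is an interval $(0, r_1)$ with $u(r_1) = 1$. So all inflection points lie in $(0, r_1)$. On $(0, r_1)$, I claim $g$ is strictly convex wherever $g' = 0$: compute $g''(\rho) = \frac{d}{dr}\big[-r^{n-1}f(u)\big]\big|_\rho$ using $(r^{n-1}u')' = -r^{n-1}f(u)$ twice, giving $g''(\rho) = -(n-1)\rho^{n-2}f(u(\rho)) - \rho^{n-1}f'(u(\rho))u'(\rho)$. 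Since $u(\rho) > 1$, $f(u(\rho)) > 0$ and $f'(u(\rho)) = -1 + p\,u(\rho)^{p-1} > -1 + p > 0$; as $u'(\rho) < 0$, the second term is positive, and the first is negative — so the sign is not immediate, and this is the main obstacle (see below).

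To resolve it I would instead argue via the energy/Pohozaev-type functions already shown positive, or more directly: suppose $u$ had two inflection points $\rho_1 < \rho_2$ in $(0, r_1)$. Between consecutive inflection points $u''$ has a fixed sign; since $u$ starts concave, $u'' < 0$ on $(0,\rho_1)$ and $u'' > 0$ on $(\rho_1, \rho_2)$, forcing $u'' \le 0$ immediately after $\rho_2$. Consider the auxiliary quantity $h(r) := -\dfrac{u'(r)}{u(r)} = -u'/u$; we have, as in \equ{psl5}, $h' = h^2 - \frac{n-1}{r}h - 1 + u^{p-1}$, and $u'' = 0 \iff (h^2 - \frac{n-1}{r}h)u = -f(u) \iff$ ... — again algebraically heavy. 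The honest assessment is that the uniqueness of the inflection point is the crux, and the standard device (used e.g. in Kwong-type arguments) is to show that the function $r \mapsto r^{n-1}u'(r)/u(r)$ or $r \mapsto r^{n-1}f(u(r))/u'(r)$ is monotone on the relevant interval, or to observe that $u''/u$ can be analyzed through a Sturm-type comparison on \equ{blp2}. Concretely, I expect the cleanest proof uses \equ{blp2}: with $\widetilde u = r^{(n-1)/2}(u - 1)$ (appropriate when $u$ oscillates about $1$, but here a variant with $\widetilde u = r^{(n-1)/2} u$ against the equation $\widetilde u'' = -\{f(u)/u - (n-1)(n-3)/(4r^2)\}\widetilde u$), the zeros of $u''$ correspond to zeros of a first-order expression in $\widetilde u, \widetilde u'$, and Sturm separation plus the known sign data pin down exactly one.

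For a nodal solution the argument localizes: fix $i$ and work on $[c_{i-1}, z_i]$, the first semi-phase of Phase $i$ (notation of \equ{biri}, \equ{oi}, \equ{ei}). On this interval $u$ is strictly monotone (strictly decreasing if $i$ odd, strictly increasing if $i$ even), with $|u(c_{i-1})| > \alpha_* > 1$ by Proposition \ref{basic11}(iii) and $|u(z_i)| = 0$. At $c_{i-1}$, $u'(c_{i-1}) = 0$ so $u''(c_{i-1}) = -f(u(c_{i-1}))$, which has the opposite sign to $u(c_{i-1})$ (since $|u(c_{i-1})| > 1$); e.g. for $i$ odd, $u(c_{i-1}) < -\alpha_*$ gives $f < 0$, $u'' > 0$ — wait: for $i$ odd the previous critical point is $c_{i-1}$ with $u(c_{i-1})$ of sign $(-1)^{i-1}$; let me take $i$ odd so $i - 1$ even and $u(c_{i-1}) > 0$, in fact $u(c_{i-1}) > \alpha_*$, so $u''(c_{i-1}) = -f(u(c_{i-1})) < 0$ — $u$ is concave at the start. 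Near $z_i$, $u \to 0^+$ with $u' < 0$, so $u'' = -\frac{n-1}{r}u' - f(u)$; here $-\frac{n-1}{r}u' > 0$ and, since $u \in (0,1)$ just before $z_i$, $-f(u) > 0$, so $u'' > 0$ near $z_i^-$. Thus concavity changes at least once in $(c_{i-1}, z_i)$. Uniqueness follows by the same mechanism as in the ground-state case, now applied on the compact interval $[c_{i-1}, z_i]$ (for $i$ even, reflect using that $f$ is odd). I would present the ground-state case in full and remark that the nodal case is identical after restricting to $[c_{i-1}, z_i]$ (or $[c_{i-1}, z_i]$ for $-u$ when $i$ is even), with the extra positivity $|u(c_{i-1})| > \alpha_* > 1$ supplied by Proposition \ref{basic11}(iii).
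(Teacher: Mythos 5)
Your argument for the \emph{existence} of an inflection point, and for the localization of all inflection points to $(0,r_1)$ (resp.\ $(c_{i-1}, r_i)$ in the nodal case), is correct and matches the paper. But the proposal has a genuine gap exactly where you flag it: uniqueness is never proved. You try three devices (convexity of $g=r^{n-1}u'$ at its critical points, the Riccati variable $h=-u'/u$, and a Sturm comparison on \equ{blp2}), abandon the first two because the signs do not close, and only conjecture the third; none is carried out. The Sturm guess is also not how the paper resolves it.

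The missing idea is a comparison of two monotone quantities via the \emph{third} derivative at an inflection point. At the first zero $r_1^*$ of $u''$ one has $u''(r_1^*)=0$ and $u'''(r_1^*)\ge 0$; differentiating \equ{eu} and using $u'<0$ turns these into $-\frac{n-1}{r_1^*}u'=f(u)$ and $\frac{n-1}{r_1^{*2}}\le f'(u)$, whence $\omega(r_1^*)=-r_1^*u'/u\ge f(u)/(uf'(u))$ as in \equ{r1s1}--\equ{r1s2}. Now $\omega(r)$ is \emph{strictly increasing} by Prop.\,\ref{basic2}\,(ii) (this is where the positivity of $P_1$ enters, a fact already in hand), while $f(u)/(uf'(u))=(|u|^{p-1}-1)/(p|u|^{p-1}-1)$ is increasing in $|u|$ for $|u|>1$ and hence \emph{decreasing} in $r$ on $(0,r_1)$; so the strict inequality \equ{r1s} propagates forward. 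At a hypothetical second inflection point $\widetilde r_1\in(r_1^*,r_1)$ where $u''$ returns to zero from above, the third-derivative test reverses, $u'''(\widetilde r_1)\le 0$, and the same computation yields $\omega(\widetilde r_1)\le f(u)/(uf'(u))$ — contradicting \equ{r1s}. (A short extra case analysis handles the degenerate situation $u''(r_1^*)=u'''(r_1^*)=0$.) This is precisely the ``monotone auxiliary function'' you were reaching for, but the correct pair is $\omega(r)$ versus $f(u)/(uf'(u))$, coupled through $u'''$, not $r^{n-1}u'/u$ or a Sturm argument. The nodal case then goes through on $(c_{i-1}, z_i)$ with the sign bookkeeping you describe.
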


\begin{proof} From (C1), (C4), and {\clr (C8)}, it follows that $f(u)/(uf'(u))$ increases with $|u|$ and $f'(u)>0$ when $|u|> 1$.

Let $u$ be a ground state. {\clb Then $u>1$ on $[0, r_1)$,  $u(r_1)=1$, and $u<1$ for $r>r_1$. From \equ{ble3} we find that $u''(0)=-f(\al)/n<0$ and $u''(r)\ge -(n-1)u'/r>0$ for $r\ge r_1$. Thus $u$ changes concavity in $(0, r_1)$.} Define $r_1^*\in (0, r_1)$ by $u''(r_1^*)=0$ and $u''(r)<0$ in $(0, r_1^*)$.
At $r=r_1^*$, {\clb we have}
 \bl{r1s1} -\frac {n-1}{r_1^*}u'=f(u),\quad u'''(r_1^*)=\frac{n-1}{r_1^{*2}}u'-f'(u)u'\ge 0\,\,\, \Rightarrow\,\,\,  \frac{n-1}{r_1^{*2}}\le f'(u),\ee
where the last inequality {\clb holds since} $u'(r)<0$ for all $r>0$. Consequently,
 \bl{r1s2} \omega(r_1^*)=\, -\frac{r_1^*u'}{u}=-\frac {n-1}{r_1^*}u'\cdot \frac 1 u \cdot \frac {r_1^{*2}}{n-1}\ge \frac{f(u)}{uf'(u)}.\ee
 Because $f(u)/(uf'(u))$ increases in $u$ when $u>1$, it decreases in $r$ for $r\in (0, r_1)$. As Prop.\,\ref{basic2} (ii) indicates that $\omega'(r)>0$ in $(0, \infty)$,
we obtain
 \bl{r1s} \omega(r)>{\clb \varpi(r):=} \frac{f(u(r))}{u(r)f'(u(r))}, \qquad r\in (r_1^*, r_1).\ee
{\clb This implies, in particular, that if $u''(r) = 0$ for some $r \in (r_1^*, r_1)$, then
$u''' (r)>0$. If $u'''(r_1^*)>0$, then $u''$ changes sign from negative to positive at $r_1^*$, and stays positive in $(r_1^*, r_1)$. Since $u''(r)>0$ for $r\ge r_1$, we reach
our conclusion in this case.}

{\clb We need} to prove that $u''(r)>0$ in $(r_1^*, r_1)$ {\clb for the degenerate case $u'''(r_1^*)=0$. By the statement following \equ{r1s}, we deduce that $u''(r)\not\equiv 0$ on $[r_1^*, r_1^*+\epsilon]$ for any $\epsilon>0$, and that $u''$ cannot have a sequence of zeros accumulating at $r_1^*$. It remains to rule out the final scenario that}
\ben u''(r_1^*)=u'''(r_1^*)=0,\,\,\,{\rm and}\,\,\, u''<0\,\,\, {\rm in}\,\,\, (r_1^*, r_1^*+\epsilon]\,\,\, {\rm for \,\, a \,\, small}\,\,\,  \epsilon>0.\een
Should this occur, $u''$ would have a zero in $(r_1^*, r_1)$ because
$u''(r_1)>0$. Between $r_1^*$ and this zero, $u''$ has a negative minimum value. Hence there exists $\widetilde r_1^*\in (r_1^*, r_1)$ such that $u''(\widetilde r_1^*)<0$ and $u'''(\widetilde r_1^*)=0$. From equation \equ{ble3} we find that
\ben -\frac {n-1}{\widetilde r_1^*}u'-f(u)=u''(\widetilde r_1^*)<0, \qquad \frac{n-1}{\widetilde r_1^{*2}}u'-f'(u)u'< u'''(\widetilde r_1^*)=0. \een
Since $u'(\widetilde r_1^*)<0$, we obtain $-(n-1)u'/\widetilde r_1^*<f(u)$, \,$(n-1)/\widetilde r_1^{*2}> f'(u)$, and
\ben \omega(\widetilde r_1^*)=-\frac {n-1}{\widetilde r_1^*}u'\cdot \frac 1 u \cdot \frac {\widetilde r_1^{*2}}{n-1}< {\clb \varpi (\widetilde r_1^*)} \een
that contradicts \equ{r1s}. This completes the proof for a ground state.

{\clb The proof above also shows that a nodal solution $u(r)$ possesses a unique inflection point in $(0, z_1)$.} The discussion in $(c_{i-1}, z_i)$ for $i>1$ is very similar, especially when $i$ is odd.
We present a brief proof for an even number $i$, paying particular attentions to the signs of relevant quantities. The phase decomposition \equ{ei} indicates that, when $i$ is even, $u(r)<0$ and $u'(r)>0$ in $(c_{i-1}, z_i)$, and
\bl{cizif} f(u(r))<0, \,\, f'(u(r))>0,\,\, r\in [c_{i-1}, r_i); \quad f(u)>0,\,\, r\in (r_i, z_i).\ee
 Hence $u''(c_{i-1})=-f(u(c_{i-1}))>0$, {\clb $u''(r)<-f(u)< 0$ in $(r_i, z_i)$}, and $u''$ has a zero in $(c_{i-1}, r_i)$. Let $r^*_i\in (c_{i-1}, r_i)$ be such that $u''>0$ in $(c_{i-1}, r^*_i)$ and $u''(r^*_i)=0$. {\clb Then
\ben -\frac {n-1}{r_i^*}u'=f(u),\quad u'''(r_i^*)=\frac{n-1}{r_i^{*2}}u'-f'(u)u'\le 0\,\,\, \Rightarrow\,\,\,  \frac{n-1}{r_i^{*2}}\le f'(u).\een
Consequently, $\omega(r_i^*)\ge \varpi(r_i^*)$, and $\omega(r)>\varpi(r)$
for $r\in (r_i^*, r_i)$. The remaining part of the proof that $u''(r)<0$ in $(r_i^*, r_i)$
is analogous to the ground state case. We therefore provide only a detailed discussion to exclude the degenerate case}
\ben u''(r_i^*)=u'''(r_i^*)=0,\,\,\,{\rm and}\,\,\, u''>0\,\,\, {\rm in}\,\,\, (r_i^*, r_i^*+\epsilon]\,\,\, {\rm for \,\, a \,\, small}\,\,\,  \epsilon>0.\een
 Should it occur, $u''(r_i)<0$ would force $u''$ to take a positive maximum value, say, at $\widetilde r_i^*\in (r_i^*, r_i)$. Hence $u''(\widetilde r_i^*)>0$, $u'''(\widetilde r_i^*)=0$,
\ben -\frac {n-1}{\widetilde r_i^*}u'-f(u)=u''(\widetilde r_i^*)>0,\quad{\rm and}\quad
\frac{n-1}{\widetilde r_i^{*2}}u'-f'(u)u'> u'''(\widetilde r_i^*)=0. \een
Since $u(\widetilde r_i^*)<0$ and $u'(\widetilde r_i^*)>0$, by using \equ{cizif} we obtain
$(n-1)u'/\widetilde r_i^*<-f(u)$, $(n-1)/\widetilde r_i^{*2}> f'(u)$, and
\ben \omega(\widetilde r_i^*)=\frac {n-1}{\widetilde r_i^*}u'\cdot \frac 1 {-u} \cdot \frac {\widetilde r_i^{*2}}{n-1}< \varpi(\widetilde r_i^*)\een
that again leads to a contradiction. The proof is completed. \end{proof}

{\clb We note that, for a  nodal solution $u$ with zeros $z_1<\cdots<z_k$, its
convexity in the intervals
$(z_i, c_i)$ for $1\le i\le k-1$ and $(z_k, \infty)$ is not addressed in this proposition.}

 \section{{\clb Outline of the Proofs of Theorems \ref{hh1} and \ref{hh2}}}

{\clb Our proofs of Theorems \ref{hh1} and \ref{hh2} are based on the shooting method,
a technique introduced by Kolodner \cite{kol}, by analyzing the behavior of the variation
\ben v(r, \alpha)={\p u(r, \alpha)}/{\p\alpha}  \een
corresponding to the solution $u=u(r, \alpha)$ of \equ{ble3}.
Given that $u$ is bounded for all $r \ge 0$, standard
results from ODE theory (see Chapter 7 of \cite{br} or Chapter 2 of \cite{cl}) ensure that the variation $v(r)$ is well-defined for all $r\ge 0$ and satisfies}
\bl{ev} v''+\frac {n-1}rv'+f'(u)v=0, \quad v(0)=1,\quad v'(0)=0. \ee
{\clb By the uniqueness theorem for ODEs, $v$ cannot have a double zero; that is,
whenever $v$ vanishes, $v'\ne 0$ and $v$ changes sign near its zero.}

\begin{lemma}\label{tail0} Let {\clb $u(r, \al)$ be the solution of \equ{eu} and
$v(r, \al)={\p u(r, \alpha)}/{\p\alpha}$. Suppose that, for any $\al>\al^*$, the variation $v(r,\al)$ satisfies the following conditions:

(a) If $u(r)=u(r, \al)$} has exactly $k\ge 1$ zeros $z_1<\cdots<z_k$,
then $v(r)=v(r, \al)$ has precisely $k$ zeros $\tau_1<\cdots<\tau_k$ on $[0, z_k]$, where $\tau_1\in (0, z_1)$, and $\tau_i\in (z_{i-1}, z_i)$ for $2\le i\le k$. Moreover, $u'v'>0$ whenever $v=0$.

(b) {\clb If $u(r)$ is a ground state, then $v(r)$ has exactly one zero $\tau_1\in (0, \infty)$. If $u(r)$ is a $k$-node bound state,} then $v(r)$ has precisely one additional zero $\tau_{k+1}$ in $[z_k, \infty)$ and $\tau_{k+1}>c_k>z_k$, where $c_k$ is the largest critical point of $u$. {\clb In either case, $v$} is strictly monotone for sufficiently large $r$ and $\lim_{r\to\infty} |v(r)|=\infty$.

{\clb Then Theorems \ref{hh1} and \ref{hh2} hold}.
\end{lemma}

\begin{proof} {\clb We begin with the proof showing that Theorem \ref{hh2} follows from (a) and (b), and conclude with a simple verification of Theorem \ref{hh1}. The model nonlinearity $f$ in \equ{eu} clearly satisfies conditions (C1)-(C5) stated in Section 2, with $\zeta=-f'(0)=1$ in (C3), while the constants
 $\al_*>1$ in (C2) and $\al^*>\al_*$ in (C5) are given by \equ{als} in terms of $n$ and $p$.
Propositions \ref{basic1}--\ref{basic2}\,(i) will be invoked in the proof below.}

Denote by $\mathcal{N}(\al)$ the number of zeros of $u(r, \al)$ over $(0,\infty)$.
 {\clb Then $\mathcal{N}(\al)=0$ for all $\al\le \al^*$ by Prop.\,\ref{basic2} (i), and
$\mathcal{N}(\al)$ is finite for each $\al>\al^*$ by Prop.\,\ref{basic13} (ii).
We now employ condition (a) to show that $\mathcal{N}(\al)$ is non-decreasing.  For a given $\al>\al^*$, let $z(\al)$ be a zero of a nodal solution $u=u(r,\al)$. By positioning the zeros of $v$ relative to those of $u$ as described by condition (a), we find that $u'v>0$ at $z(\al)$.} Differentiating $u(z(\al), \al)=0$ with respect to $\al$ gives
\bl{zpal} u'(z(\al), \al)z'(\al)+v(z(\al), \al)=0\quad \Rightarrow \quad z'(\al)=-\frac{v(z(\al), \al)}{u'(z(\al), \al)}<0.\ee
Let $u(r,\overline\al)$ be a nodal solution with exactly $j$ zeros $z_1(\overline\al)<\cdots<z_j(\overline\al)$ and assume that $\al>\overline\al$. Then $u(r,\al)$ has at least $j$ zeros with $z_1(\al)<z_1(\overline\al), \cdots, z_j(\al)<z_j(\overline\al)$.
{\clb Clearly, $\mathcal{N}(\al)\ge \mathcal{N}(\overline\al)$, which confirms the monotonicity property of $\mathcal{N}(\al)$.

Next we establish a pausing property of $\mathcal{N}(\al)$: If $\overline u(r)=u(r,\overline\al)$, $\overline\al>\al^*$, is an oscillatory solution}, then $\mathcal{N}(\al)$ remains a constant in a neighborhood of $\overline\al$. To show this, {\clb we select $\overline r$ to be a critical point of $\overline u$ at which $|\overline u|<1$. Then $\overline r$ lies behind the last zero of
$\overline u$ if it is a nodal solution}, and the energy function is negative at $\overline r$. If $\al$ is sufficiently close to $\overline \al$, then {\clb $u$ has the same number of zeros as $\overline u$ does} in $(0, \overline r]$, and the energy $E(r)$ for $u(r,\al)$ is also negative at $\overline r$ by continuity. Prop.\,\ref{basic1} (i) implies that $E(r)<0$ in $[\overline r, \infty)$. Thus
$u$ has no zeros in $(\overline r, \infty)$ and $\mathcal{N}(\al)=\mathcal{N}(\overline\al)$.

{\clb Moreover, we employ condition (b) to establish a jumping property of $\mathcal{N}(\al)$: If $u(r,\overline\al)$ is not an oscillatory solution, i.e., $\overline u$ is either a ground state or a bound state according to Prop.\,\ref{basic13} (iii), then
 $\liminf_{\al\to \overline\al^+}\mathcal{N}(\al)>\mathcal{N}(\overline\al)$.} We prove the jumping property by modifying McLeod's approach in \cite{mc}. Assume first that $j=\mathcal{N}(\overline\al)$ is odd. Then $\overline u$ is eventually negative and $\overline u \uparrow 0$ as $r\tin$. Denote by $\overline v$ the solution of \equ{ev} corresponding to $\overline u$. {\clb In view of condition (b),} $\overline v,\, \overline v'>0$ for $r$ sufficiently large and $\lim_{r\to\infty} \overline v(r)=\infty$. Fix $\overline r$ so large that $\overline v,\, \overline v'>0$ at $\overline r$, and for all $r\ge \overline r$, {\clb $f'(\overline u(r))<0$}.
 Clearly, $\overline r$ is behind the last critical point of $\overline u$, so $\overline u \uparrow 0$ in $(\overline r, \infty)$. If $\al>\overline \al$ and $\al$ is sufficiently close to $\overline \al$, then $w(r):=u(r)-\overline u(r)$ and $w'(r)$ are both positive at
$\overline r$, and $u(\overline r)<0$. Of course, $u$ has exactly $j$ zeros in $(0, \overline r)$ {\clb and $\mathcal{N}(\al)\ge j$}. Suppose for contradiction that $u$ has no zeros in $(\overline r, \infty)$. Then $u<0$ in $(\overline r, \infty)$.
If $u$ is also a bound state, then $w\to 0$ as $r\tin$; if $u$ is oscillatory, then $u$ oscillates about $-1$ in $(\overline r, \infty)$ to force $w=0$ at a subsequent point. In either case, if follows from $w(\overline r)>0$ and $w'(\overline r)>0$ that
$w$ would take a positive maximum at some $\widehat r>\overline r$, where
{\clb $f'(u(\widehat r))<0$ and $f'(\overline u(\widehat r))<0$}. Thus $w(\widehat r)>0$, $w'(\widehat r)=0$, and $w''(\widehat r)\le 0$. However, by using \equ{eu} we find that, for some $\widetilde u(r)$ between $\overline u(r)$ and $u(r)$,
\ben w''+ \frac{n-1}{r}w' + f'(\widetilde u(r))w = 0 \quad \Rightarrow \quad
w''(\widehat r)=-f'(\widetilde u(\widehat r))w(\widehat r)>0.
 \een
It gives a contradiction and confirms {\clb the jumping property} when $j=\mathcal{N}(\overline\al)$ is odd. If $j$ is even {\clb(including zero)}, then $\overline u>0$ and $v<0$ ultimately with $\lim_{r\to\infty} v(r)=-\infty$. The proof can be completed by reflecting the discussions above.

{\clb Now we apply the non-decreasing, pausing, and jumping property of $\mathcal{N}(\al)$ to prove Theorem \ref{hh2}. Let $\al_0>\al^*$ be such that $u_0=u(r, \al_0)$ is a ground state;
the existence of $\al_0$ is ensured} by the existence theorem of Berestycki and Lions \cite{bl1, bl2}. {\clb Then $\mathcal{N}(\al)\ge 1$ for $\al>\al_0$, and $\mathcal{N}(\al)=0$ for
$\al<\al_0$. Should there exist $\overline \al<\al_0$ such that $u(r, \overline \al)$ is another ground state, the jumping property would yield $\mathcal{N}(\al_0)>0$ and a contradiction. Therefore, $u_0$ is the only ground state, which verifies part (i) of Theorem \ref{hh2}.
If $\al<\al_0$ and $\al\ne 1$, then $u$ is positive in $(0, \infty)$ and oscillates
about $u\equiv 1$; furthermore, we see from Prop.\,\ref{basic12} that $\inf u>0$ and confirm
part (iii) of Theorem \ref{hh2}.

Similarly, let $\al_k>\al_0$, $k\ge 1$, be such that $u_k=u(r, \al_k)$ is a $k$-node bound state; the existence of $\al_k$ is ensured} by the existence of solutions of \equ{ble1} with any prescribed number of zeros (
see Jones and K\"upper \cite{jk}, or McLeod, Troy and Weissler \cite{mtw}).
{\clb Then $\mathcal{N}(\al)\ge k+1$ for $\al>\al_k$, and $\mathcal{N}(\al)\le k$ for
$\al<\al_k$, implying that $u_k$ is the unique $k$-node bound state and}
$\al_0<\al_1<\al_2<\cdots$.  Because $\mathcal{N}(\al)$ is finite for any $\al>0$, it holds that $\lim_{k\tin} \al_k=\infty$.

{\clb We now verify the remaining statements of Theorem \ref{hh2} (ii) and (iv):} For the $k$-node bound state $u_k$, the existence of a unique critical point between each pair of its two consecutive zeros and a unique critical point behind its last zero is asserted in Prop.\,\ref{basic11} (iii). The two limiting properties in \equ{lim0} are given in Prop.\,\ref{basic11} (iv) {\clb with $\zeta=1$. In addition, Prop.\,\ref{basic11} (iii) ensures that $|u_k|>\al_*$ at each of its critical points}. This completes the proof of Theorem
\ref{hh2} (ii). For each $\al\in (\al_k, \al_{k+1})$, the jumping property of $\mathcal{N}(\al)$ implies that $u(r)$ cannot be a bound state, and Prop.\,\ref{basic13} (iv) indicates that
$u(r)$ is oscillatory with $\mathcal{N}(\al)=k+1$. According to Prop.\,\ref{basic12}, it oscillates about $u\equiv 1$ or $u\equiv -1$ behind its last zero. Theorem \ref{hh2} (iv) is thus confirmed.

{\clb To conclude, we present a simple verification of Theorem \ref{hh1}. Its first part, concerning the existence and uniqueness of a $k$-node bound state for any $k\ge 1$, follows directly from Theorem \ref{hh2}. It implies further that $\lim_{\al\to\infty}\mathcal{N}(\al)=\infty$. Consequently, for a given integer $i>0$,
the $i$th zero $z_i(\al)$ of $u(r,\al)$ exists for all sufficiently large $\al$, and  $z_i'(\al)<0$ by \equ{zpal}.} It is well-known that $\lim_{\al\tin}z_i(\al)=0$; see (4.1) in \cite{mtw}, which can be derived from Theorem 2.2 in \cite{n} or Corollary 6.7 in \cite{nn}. Consequently, it is evident that, on a given finite ball in $\mrn$, there exists a unique radial {\clb solution of \equ{ble1} that takes a positive value at its center and has} a prescribed number of sign changes within this ball. The second part of Theorem \ref{hh1} is thus confirmed. \end{proof}

{\clb The remainder of this work is devoted to the verification of conditions (a) and (b) in this lemma. These conditions suffice not only to ensure the uniqueness of the $k$-node bound state and the classification of radial solutions of (1.2), but also to elucidate how the variation $v$ behaves relative to $u$.}{\footnote{{\clb Conditions (a) and (b)} reveal that, hopefully, $u$ and $v$ are not twisted in a tangle, but sway together gracefully like dancing a tango. In an animated version, we may restate {\clb (a) and (b)} by the rules of tango:
(i) Land one foot between the partner's feet; $v=0$ exactly once between two consecutive zeros of $u$. (ii) When landing, they face the same direction; $u'v'>0$ whenever $v=0$. (iii) Never step on the partner's feet; $v\ne 0$ whenever $u=0$. (iv) To conclude the dance, jump higher; $\lim_{r\to\infty} |v(r)|=\infty$.}} {\clb In addition, the divergent behavior exhibited by
the variation $v$ of a bound state $u$, $\lim_{r\to\infty} |v(r)|=\infty$, is of fundamental importance in the analysis of related nonlinear wave equations and Schr\"odinger equations.}
A solution $u$ of \equ{ble} is called non-degenerate if the kernel of the linearization $\Delta +f'(u)$, considered as a self-adjoint operator in $L^2(\mrn)$, is spanned by $\partial u/\partial x_1, \cdots, \partial u/\partial x_n$ \cite{awy, bas, len, nt}. It was noted by Weinstein \cite{we2} that the non-degeneracy of the unique ground state $u$ follows essentially from the divergence of $v$ \cite{fra}.

\section{ {\clb Analysis in Phase 1}}

{\clb In this section, we investigate the behavior of the variation $v(r, \alpha)={\p u(r, \alpha)}/{\p\alpha}$ on the first-phase interval $(0, c_1]$, where $u$ is the solution of
\equ{ble3} with a general nonlinearity $f$ satisfying some conditions to be specified later.}

\subsection {The tendency to change sign} 

We {\clr present a result based on the condition 

(C9) $f''(u)>0$ for $u>1$.}

\begin{lemma}\label{cs} {\clb Assume that (C1) and {\clr (C9) hold}. Let $u=u(r, \alpha)$ be a ground state or a nodal solution of \equ{ble3}. Then we have

(i) $v(r)$ changes sign in $(0, r_1)$.

(ii) If $u(r)$ has $k\ge 2$ zeros and $2\le i\le k$, then $v(r)$ changes sign in $(\overline r_{i-1}, r_i)$. Moreover, if $v(c_{i-1})\cdot v'(c_{i-1})<0$, then $v(r)$ changes sign in $(c_{i-1}, r_i)$. }
\end{lemma}

\begin{proof} (i) Suppose to the contrary that $v(r)$ does not change sign in $(0, r_1)$. Then $v>0$ in $(0, r_1)$. {\clb Consider the Wronskian of $u'$ and $v'$:
\bl{rho}  \varrho(r):=r^{n-1} \left(u''v'-u'v''\right)=r^{n-1} \left[f'(u)u'v-f(u)v'\right]. \ee
Since $u>1$ in $(0, r_1)$, we find from \equ{rhop} that
$ \varrho'(r)= r^{n-1}f''(u)u^{\prime 2}v>0$ in $(0, r_1)$ and $\varrho(r_1)>\varrho(0)=0$.
However, substituting $f(u(r_1))=f(1)=0$, $f'(u(r_1))=f'(1)\ge 0$, $u'(r_1)<0$, and $v(r_1)\ge 0$ into \equ{rho} yields $\varrho(r_1)\le 0$. This leads to a contradiction.}

(ii) {\clb Since $f$ is odd by (C1), it follows that $f'(1)=f'(-1)\ge 0$, and that $f''(u)<0$ for $u<-1$. By using \equ{rho} together with $|u(\overline r_{i-1})|=|u(r_{i})|=1$, we deduce that} \bl{rhor} \varrho(\overline r_{i-1})=\overline r_{i-1}^{n-1}{\clb f'(1)}u'(\overline r_{i-1})v(\overline r_{i-1}),\quad \varrho(r_i)=r_i^{n-1}{\clb f'(1)}u'(r_i)v(r_i).\ee
Suppose for contradiction that $v>0$ in $(\overline r_{i-1}, r_i)$. If $i$ is even,
then $u'(\overline r_{i-1})<0$, $u'(r_i)>0$, and $u<-1$ in $(\overline r_{i-1}, r_i)$.
As a result, \equ{rhor} yields $\varrho(\overline r_{i-1})\le 0$ and $\varrho(r_i)\ge 0$,
but \equ{rhop} gives $\varrho'(r)<0$ in $(\overline r_{i-1}, r_i)$ and a contradiction.
If $i$ is odd, then $u'(\overline r_{i-1})>0$, $u'(r_i)<0$, and $u>1$ in $(\overline r_{i-1}, r_i)$. Consequently, \equ{rhor} yields $\varrho(\overline r_{i-1})\ge 0$ and $\varrho(r_i)\le 0$,
while \equ{rhop} gives $\varrho'(r)>0$ in $(\overline r_{i-1}, r_i)$ and again
a contradiction. The same reasoning shows that it is also impossible to have $v<0$ in $(\overline r_{i-1}, r_i)$.

Suppose $v(c_{i-1})\cdot v'(c_{i-1})<0$. If $v<0$ in $(c_{i-1}, r_i)$, then $v(c_{i-1})<0$ and $v'(c_{i-1})>0$. When $i$ is even, we find $\dsp \varrho(c_{i-1})=-c_{i-1}^{n-1}f(u)v'>0$ from  \equ{rho} and $\dsp \varrho(r_i)\le 0$ from \equ{rhor}, which contradict $\varrho'(r)>0$ in $(c_{i-1}, r_i)$ derived from \equ{rhop}. When $i$ is odd, \equ{rho} yields $\dsp \varrho(c_{i-1})<0$ and \equ{rhor} yields $\dsp \varrho(r_i)\ge 0$, which contradict $\varrho'(r)<0$ in $(c_{i-1}, r_i)$. A similar argument rules out the possibility that $v>0$ in $(c_{i-1}, r_i)$. \end{proof}

\subsection {The resistance to {\clb further sign changes}} For a ground state $u$, {\clb it is rather technical to show that $v$ does not change sign a second time \cite{coffman, fra, k, mc, tao}. Most of our proof below involves determining the signs of }
\bl{qm} Q(r):=r^n\left[u'v'+f(u)v\right]+(n-2)r^{n-1}u'v, \quad M(r):=r^{n-1}(u'v-uv'),\ee
and the following function introduced in \cite{tang03}:
\bl{t1g1} T_1(r):=Q(r)-g_1(u(r))M(r), \quad g_1(u):=2f(u)/[uf'(u)-f(u)].\ee

\begin{lemma}\label{rcs}  {\clb Assume that (C1), (C3), (C5)--(C6), and (C9) hold. In addition, suppose that $uf'(u)-f(u),\, g_1'(u)>0$ for $u>0$.}

(i) If $u(r)$ is a nodal solution of \equ{ble3}, then there exists $\tau_1\in (0, r_1)$ such that $v(r)>0$ in $(0, \tau_1)$, $v(\tau_1)=0$, and $v(r)<0$ in $(\tau_1, z_1]$. {\clb In addition,}
\bl{qmt1++} Q(r), \,\, M(r)>0, \,\,\, r\in (0, z_1]; \quad T_1(r), \,\, T_1'(r)>0, \,\,\, r\in (0, z_1).\ee

(ii) If $u(r)$ is a ground state of \equ{ble3}, then there exists $\tau_1\in (0, r_1)$ such that
\bl{rcs1} v(r)>0 \,\,\,{\rm in}\,\,\, (0, \tau_1), \,\,\, v(\tau_1)=0, \,\,\, v(r)<0 \,\,\,{\rm in}\,\,\, (\tau_1, \infty), \,\,\, \lim_{r\to\infty} v(r)=-\infty.\ee
\end{lemma}
 \begin{proof} (i) According to Lemma \ref{cs}\,(i), there exists $\tau_1 \in (0, r_1)$ such that $u>1$ and $v>0$ in $(0, \tau_1)$, $v(\tau_1)=0$ and $v'(\tau_1)<0$. {\clb From the appendix, we have
 \ben Q'(r)=2r^{n-1}f(u)v,\quad M'(r)=r^{n-1}[uf'(u)-f(u)]v,\quad T_1'(r)= -g_1'(u)u'\cdot M(r).\een
 The assumptions in this lemma, together with $Q=M=T_1=0$ at $r=0$, yield}
\bl{qmt1+} Q'(r),\, M'(r)>0,\,\,\, r\in(0, \tau_1);\quad  Q(r),\, M(r), \, T_1'(r),\, T_1(r)>0, \,\,\, r\in (0, \tau_1].\ee
{\clb Should $v$ have a second zero on $(0, z_1]$, then $M\le 0$ at the second zero. To eliminate this possibility,} it suffices to show that
 \bl{m+} M(r)>0, \qquad r\in (0, z_1].\ee
If \equ{m+} were false, then there would be $\tau_{1m}\in (\tau_1, z_1]$ such that
\bl{t1s}  M(r)>0 \,\,\, {\rm in}\,\,\, (0, \tau_{1m}),\,\,\, M(\tau_{1m})=0; \quad v(r)<0\,\,\, {\rm in}\,\,\, (\tau_1, \tau_{1m}).\ee
Hence {\clb $T_1'(r)>0$ in $(0, \tau_{1m})$, and \equ{t1g1} implies that}
\ben Q(\tau_{1m})=T_1(\tau_{1m})+g_1(u(\tau_{1m}))M(\tau_{1m})=T_1(\tau_{1m})>T_1(0)=0.\een
Thus $\tau_{1m}$ cannot be a zero of $v$ and $v(\tau_{1m})<0$. We then derive
from $M(\tau_{1m})=0$ that $uv'=u'v>0$ and $u'/v'=u/v<0$ at $r=\tau_{1m}$, and for $P_1$ defined in \equ{P1},
\bl{p1tm} P_1(\tau_{1m})&=& \tau_{1m}^n\left[u'v'\cdot \frac{u'}{v'}+f(u)v\cdot\frac{u}v\right]+(n-2)\tau_{1m}^{n-1}uv'\cdot \frac{u'}{v'}\nonumber\\
&=& \left\{\tau_{1m}^n\left[u'v'+f(u)v\right]+(n-2)\tau_{1m}^{n-1}uv'\right\} \cdot \frac{u}{v} \nonumber\\
&=& Q(\tau_{1m})\cdot \frac{u(\tau_{1m})}{v(\tau_{1m})}<0. \ee
This contradicts Prop.\,\ref{basic2} (ii) and confirms \equ{m+}.

{\clb We now verify \equ{qmt1++}. Since $M'(r)>0$ in $(0, \tau_1)$ and $M'(r)<0$ in $(\tau_1, z_1)$, we deduce from $M(0)=0$ and $M(z_1)=z_1^{n-1}u'v>0$ that $M>0$ on $(0, z_1]$. Thus $T_1'(r)>0$ and $T_1(r)>0$ for $r\in (0, z_1)$. As $g_1(u(r_1))=g_1(1)=0$, \equ{t1g1} gives
$Q(r_1)=M(r_1)>0$. Since $Q$ increases from $Q(0)=0$ in $(0,\tau_1)$, decreases in $(\tau_1, r_1)$ with $Q(r_1)>0$, and then increases in $(r_1, z_1)$, we conclude with $Q>0$ in $(0, z_1]$.}

(ii) Let $u(r)$ be a ground state. {\clb By replacing $z_1$ with $\infty$ and repeating the first paragraph in the proof of Part (i), we deduce that $v(r)$ has a unique zero $\tau_1\in (0,\infty)$, with $\tau_1\in (0, r_1)$. Thus} $(r^{n-1}v')'=-r^{n-1}f'(u)v<0$ for $r$ sufficiently large, and $v$ is ultimately monotone. Should $v(r)$ have a finite limit as $r\tin$, {\clb then we would have $\lim_{r\to\infty}Q(r)=\lim_{r\to\infty}M(r)=0$ due to the exponential decay of $u$ at infinity. The second paragraph in the proof of Part (i) shows that $Q(r),\, M(r)>0$ in $(0,\infty)$. Consequently, $Q'(r)>0$ in $(r_1, \infty)$ and
$\lim_{r\to\infty}Q(r)>Q(r_1)>0$.
This leads to a contradiction; hence we must have $\lim_{r\to\infty} v(r)=-\infty$.} \end{proof}

\subsection{A bridge crossing the river} In the {\clb second part $(z_1, c_1]$ of Phase 1, $u$ becomes negative and decreases to its absolute minimum at $c_1$. To elucidate the behavior of $v$ in $(z_1, c_1]$, we focus primarily on determining the signs of $Q$ and $M$, along with the $Q$-family functions $Q_i(r)=Q(r)+ir^{n-1}u'v$ and the ``bridging function"
\bl{ba} B_a(r):=Q(r)-aM(r)-2F_a(u(r))\cdot \frac{r^{n-1}v}{u'},\ee
where $a$ is a real constant and
\bl{Fau} F_a(u):=F(u)-\frac a2\left[uf(u)-2F(u)\right]. \ee
In Phase 1, our analysis needs only the case $a = 0$. }

\begin{lemma}\label{phasel1}  {\clb Assume that all conditions of Lemma \ref{rcs} hold, and that
$u(r)$ is either a solution of \equ{ble3} with multiple zeros or a bound state.} Then there is $\tau_1\in (0, r_1)$ such that $v(r)>0$ in $(0, \tau_1)$, $v(\tau_1)=0$, and $v(r)<0$ in $(\tau_1, c_1]$.{\footnote{This result reveals {\clb the} counterintuitive, somewhat surprising, fact that $v$ stays {\clb entirely negative} when $u$ rolls down from $u(z_1)=0$ to its absolute minimum $u(c_1)$. By symmetry, one might expect {\clb $v$ to change sign exactly} once in $(z_1, c_1)$ as it does in $(0, z_1)$.}} Moreover
\bl{mq1q2} Q(r)>0 \,\, \, {\rm in}\, \, \, (0, z_1]\cup [\overline b_1, c_1], \quad M(r),
\, Q_1(r),\, Q_2(r)>0\,\, \, {\rm in}\, \, \, (0, c_1].\ee
\end{lemma}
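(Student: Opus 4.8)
The plan is to extend Lemma \ref{rcs}(i) — which already supplies $\tau_1\in(0,r_1)$ with $v>0$ on $(0,\tau_1)$, $v(\tau_1)=0$, $v<0$ on $(\tau_1,z_1]$, and $Q,M>0$ on $(0,z_1]$, $T_1,T_1'>0$ on $(0,z_1)$ by \equ{qmt1++} — from $z_1$ out to the first critical point $c_1$. First I would record $Q_1,Q_2>0$ on $(0,z_1]$ by the same energy/zero-counting arguments used in Section 3.2: near $0$ the relevant derivatives are positive, on $(0,\tau_1]$ one uses \equ{q1}--\equ{q2} with $u'<0$, $v'<0$, $f(u)>0$, and on $(\tau_1,z_1]$ one checks that $Q_1$ (hence $Q_2=Q_1+r^{n-1}u'v$, since $u'v>0$ there) cannot vanish, because by \equ{q1} its derivative at any interior zero is strictly negative. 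Next I would rule out blow-up: a solution of \equ{ev} can become unbounded only on an interval where $f'(u)<0$, i.e. where $|u|<p^{-1/(p-1)}$, and there $u$, hence $f'(u)$, is bounded, so no finite-distance singularity can occur on $(z_1,c_1]$ (alternatively, one repeats the argument that disposes of the ``first case'' in the proof of Lemma \ref{rcs}, built on the identity \equ{uvp}). Thus $v$ is continuous on $(0,c_1]$, and it remains to prove $v\ne 0$ on $(z_1,c_1]$; since $v(\tau_1)=0$ and $v'(\tau_1)<0$, this delivers $v<0$ on $(\tau_1,c_1]$.

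The core is a contradiction argument. Suppose $v$ had a zero in $(z_1,c_1)$ and let $\sigma$ be the first one; then $v<0$ on $(\tau_1,\sigma)$, $v(\sigma)=0$, $v'(\sigma)>0$, $u'(\sigma)<0$, and substituting $v(\sigma)=0$ into \equ{qm}, \equ{q1}, \equ{q2}, \equ{qn} gives
\[
Q(\sigma)=Q_1(\sigma)=Q_2(\sigma)=Q_n(\sigma)=\sigma^n u'(\sigma)v'(\sigma)<0 .
\]
So it suffices to show that one of $Q,Q_1,Q_2$ stays positive on $(\tau_1,\sigma)$. The bridge $B_0$ of \equ{bobo} is built to carry the positivity of $Q$ across $z_1$: since $F$ vanishes at $u=0$ and $u=-\alpha_*$, one has $B_0=Q$ whenever $r=z_1$, $r=\overline b_1$, or $v(r)=0$, while on $(z_1,\overline b_1)$, where $|u|<\alpha_*$ and hence $F(u)<0$, identities \equ{bop}, \equ{dvarn} show that the sign of $B_0'$ equals that of $Q_n$. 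Granting $Q_n>0$ on $(z_1,\overline b_1)$ (see below), $B_0$ is increasing there from $B_0(z_1)=Q(z_1)>0$, so $Q>0$ at any zero of $v$ in $(z_1,\overline b_1]$ and $Q(\overline b_1)>0$; and if $\sigma>\overline b_1$, then on $(\overline b_1,c_1)$ one has $u<-\alpha_*$, so $f(u)<0$ and $v<0$, whence $Q'=2r^{n-1}f(u)v>0$ by \equ{qp}, i.e. $Q$ increases from $Q(\overline b_1)>0$. In either case $Q(\sigma)>0$, contradicting the display. Hence $v$ has no zero in $(z_1,c_1)$, so $v<0$ on $(\tau_1,c_1)$.

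It then remains to collect \equ{mq1q2} and settle the endpoint $c_1$. For $Q_1,Q_2$ on $(z_1,c_1]$: on $[\overline b_1,c_1]$ one has $r^{n-1}u'v>0$ and $Q>0$, so \equ{q1}--\equ{q2} give $Q_1,Q_2>0$; on $(z_1,\overline b_1)$ one combines $B_0>0$ with $E>0$ (Proposition \ref{basic2}(i)): since $u'^2=2E(r)-2F(u)>-2F(u)$ there, the inequality $B_0>0$ forces $Q+r^{n-1}u'v>0$ and $Q+2r^{n-1}u'v>0$. For $M$ on $(0,c_1]$: by \equ{mp}, $M'=(p-1)r^{n-1}u|u|^{p-1}v$ is negative on $(\tau_1,z_1)$ and positive on $(z_1,c_1)$, so $M$ attains its minimum over $(\tau_1,c_1]$ at $z_1$, where $M(z_1)>0$. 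For the endpoint: if $v(c_1)=0$, then $u'(c_1)=0$ forces $Q(c_1)=c_1^n f(u(c_1))v(c_1)=0$ by \equ{qm}; but $Q(\overline b_1)>0$ together with $Q'=2r^{n-1}f(u)v>0$ on $(\overline b_1,c_1)$ force $Q(c_1)>0$, a contradiction, so $v(c_1)<0$ and $Q(c_1)>0$. Assembling these gives \equ{mq1q2}.

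I expect the genuine obstacle to be the \emph{renewability}, i.e. the claim $Q_n>0$ (equivalently $\varphi_n>0$, equivalently $B_0$ increasing) on $(z_1,\overline b_1)$. It is delicate because $Q$ has a local maximum at $z_1$ and decreases just past it (the factor $f(u)$ in $Q'=2r^{n-1}f(u)v$ changes sign at $z_1$), the devices $W_a$ and $T_1$ that worked on $(0,z_1]$ break down there ($T_1$ is not even defined at $z_1$), and $Q_n$ is not visibly positive on $(z_1,\overline b_1)$. Proving it should require a careful sign analysis resting on a connection identity relating $Q$, $M$, $Q_n$ to the universally positive functions $E,P,P_1,P_2$ of Proposition \ref{basic2}, supplemented if necessary by further bridge-type functions; this is the part I expect to consume the bulk of the work. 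A minor additional caveat: the derivative formula \equ{rhop} for the Wronskian $\varrho$ of $u'$ and $v'$ is singular at $z_1$ when $1<p<2$, so any estimate near $z_1$ that invokes $\varrho'$ must be arranged to avoid that point.
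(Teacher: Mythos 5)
Your architecture is the paper's: carry the positivity of $Q$ across the ``river'' $(z_1,\overline b_1)$ with the bridge $B_0$ of \equ{bobo}, whose monotonicity is governed by the sign of $Q_n$ through \equ{dvarn} and \equ{bop}, then climb the uphill stretch $[\overline b_1,c_1]$ where $Q'=2r^{n-1}f(u)v>0$. But the proposal has a genuine gap exactly where you flag one: you \emph{grant} $Q_n>0$ on $(z_1,\overline b_1)$, defer its proof, and predict it will need the connection identity and the delicate case analysis of the later sections. That prediction is mistaken for Phase 1, and without the claim your contradiction argument does not close. The point you are missing is that $Q_n>0$ need not be established a priori: on $(z_1,\overline b_1)$ one has $u'<0$ and, as long as $v<0$, $u'v>0$, so \equ{q1} and \equ{qn} give $Q_n=Q_1+(n-1)r^{n-1}u'v>Q_1$. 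The positivity of $Q_n$ therefore comes for free on any subinterval where $Q_1>0$, and the correct move is to bootstrap on $Q_1$, not on $Q_n$.

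Concretely, the paper argues: suppose $Q_1$ had a first zero $\widetilde q_1\in(z_1,\overline b_1]$ (note $Q_1(z_1)>Q(z_1)>0$ since $u'v>0$ at $z_1$). On $[z_1,\widetilde q_1)$ we have $v<0$, $u'v>0$, hence $Q_n>Q_1>0$, hence $\varphi_n>0$ and, since $F(u)<0$ there, $B_0'>0$ by \equ{bop}; therefore $B_0(\widetilde q_1)>B_0(z_1)=Q(z_1)>0$. On the other hand, the identity $B_0=Q_1-\frac{2r^{n-1}v}{u'}\,E(r)$ (immediate from \equ{bobo}, \equ{q1} and \equ{ef}) evaluated at $\widetilde q_1$, where $Q_1=0$, gives $B_0(\widetilde q_1)=-\frac{2\widetilde q_1^{n-1}v}{u'}E(\widetilde q_1)<0$, because $v/u'>0$ and $E>0$ by Prop.~\ref{basic2}\,(i). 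This contradiction yields $Q_1>0$ on $[z_1,\overline b_1]$, which simultaneously rules out a zero of $v$ there (at the first such zero $Q_1=r^nu'v'<0$), delivers $Q_n>Q_1>0$ and hence $Q(\overline b_1)=B_0(\overline b_1)>B_0(z_1)=Q(z_1)>0$, and feeds your (correct) arguments on $[\overline b_1,c_1]$ and for \equ{mq1q2}. No restriction on $n$ or $p$ enters; the machinery of Lemma~\ref{b14}, including the residual case $n=3$, $1<p<2$, is needed only later for the renewability of $T_2$ in Lemma~\ref{t2c1}, not for Lemma~\ref{phasel1}. (Indeed $Q_n<0$ for small $r>0$, so a global a priori positivity of $Q_n$ is not even available.) The remainder of your write-up --- the collection of $Q_1,Q_2,M$, the endpoint $c_1$, and the boundedness of $v$ --- is sound once this step is supplied.
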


 \begin{proof} According to Lemma \ref{rcs}, there exists $\tau_1\in (0, r_1)$ such that $v(r)>0$ in $(0, \tau_1)$, $v(\tau_1)=0$, and $v(r)<0$ in $(\tau_1, z_1]$.

{\clb We first show that $v<0$ on $[z_1, \overline b_1]$. It suffices to prove that
\bl{q1+} Q_1(r)=Q(r)+r^{n-1}u'v>0,\qquad r\in [z_1, \overline b_1],\ee
because $Q_1=r^n u'v'<0$ at the first presumed zero of $v$ in $(z_1, \overline b_1]$.
 As $u'v>0$ at $z_1$, it follows from \equ{qmt1++} that $Q_1(z_1)>Q(z_1)>0$.} If \equ{q1+} were false, then there would be $\widetilde q_1\in (z_1, \overline b_1]$ such that, for $Q_n(r)=Q(r)+nr^{n-1}u'v$,
\bl{tq1} v<0\,\,\,{\rm on}\,\,\, [z_1, \widetilde q_1], \quad
Q_n(r)>Q_1(r)>0\,\,\,{\rm in}\,\,\, [z_1, \widetilde q_1), \quad Q_1(\widetilde q_1)=0.\ee
{\clb With $a=0$, we have $F_0(u)=F(u)$, $B_0(z_1)=Q(z_1)$, and $B'_0(r)=-2F(u)Q_n(r)/(ru'^2)$ by \equ{bap}.} As $F(u)<0$ in $(z_1, \overline b_1)$, {\clb $B_0'(r)>0$ in $(z_1, \widetilde q_1)$. It follows} that
\bl{bqz} B_0(\widetilde q_1)>B_0(z_1)=Q(z_1)>0.\ee
However, if we evaluate $B_0(\widetilde q_1)$ by using \equ{ba} directly, then we find that
\ben B_0(\widetilde q_1)
&=& -\widetilde q_1^{n-1}u'v-2F(u)\cdot \frac{\widetilde q_1^{n-1}v}{u'} \quad \quad\qquad\qquad{\rm by}  \,\equ{tq1}, \nonumber\\&=& -\frac{2\widetilde q_1^{n-1}v(\widetilde q_1)}{u'(\widetilde q_1)}\cdot E(\widetilde q_1)<0, \quad
{\rm by}\,\,\equ{ef}, \,\, \equ{tq1},\,\, {\rm Prop.}\,\, \ref{basic1}\,{\rm (i)}, \een
which contradicts \equ{bqz} and confirms \equ{q1+}. Furthermore, it implies that
$v<0$ and $Q_n(r)>Q_1(r)>0$ on $[z_1, \overline b_1]$, and $B'_0(r)>0$ in $(z_1, \overline b_1)$.
By substituting $F(u(\overline b_1))=0$ and $F(u(z_1))=0$ into \equ{ba}, we also obtain
\bl{qb1z1} Q(\overline b_1)=B_0(\overline b_1)>B_0(z_1)=Q(z_1)>0. \ee

With $Q(\overline b_1)>0$ just {\clb established, it is straightforward} to show that $v<0$ on $[\overline b_1, c_1]$. On this interval, we have $u(r)\le -\alpha_*$, $f(u)<0$, and $Q'(r)=2r^{n-1}f(u)v>0$ as long as $v(r)<0$. Should $v$ stop being negative the first time at $\widetilde \tau\in (\overline b_1, c_1]$, then $v(\widetilde \tau)=0$, $v'(\widetilde \tau)>0$, and
$Q(\widetilde \tau)>Q(\overline b_1)>0$, which contradict $u'(\widetilde \tau)\le 0$.

To conclude, we verify \equ{mq1q2}. Recall from \equ{qmt1++} that $Q, \, M>0$ in $(0, z_1]$. Since $Q(\overline b_1)>0$ and $Q'(r)>0$ on $[\overline b_1, c_1]$, the first part of \equ{mq1q2} is verified. In $(z_1, c_1]$, {\clb we have $uf'(u)-f(u)<0$ and so $M'(r)>0$; thus $M(r)>M(z_1)>0$.} For $r\in (0, \tau_1)$, we have $f(u),\,f'(u),\,v>0$; hence $v'(r)<0$ by \equ{rnvp}, and $Q_1', \, Q_2'>0$ by \equ{q1} and \equ{q2}; thus $Q_1,\, Q_2>0$. For $r\in (\tau_1, z_1]\cup [\overline b_1, c_1)$, $u'v>0$ and $Q>0$ imply that $Q_2>Q_1>Q>0$. Also $Q_2(c_1)=Q_1(c_1)=Q(c_1)>0$. Finally, we see from \equ{q1+} that $Q_2>Q_1>0$ on $[z_1, \overline b_1]$. Thus \equ{mq1q2} is completely verified.{\footnote{Within $(b_1, \overline b_1)$,
$u$, $f(u)$, and $f'(u)$ all change {\clb sign} and $Q(r)$ may decrease, {\clb making it uncertain} whether $Q(r)>0$ for all $r\in (b_1, \overline b_1)$. {\clb The bridging function $B_0(r)$ serves as a bridge, enabling us} to drive $Q$ from $Q(b_1)>0$ to $Q(\overline b_1)>Q(b_1)>0$ without swimming in the ``river" $(b_1, \overline b_1)$. The remaining section $[\overline b_1, c_1]$ is an uphill street for $Q$, along which we readily see that $Q$ {\clb stays positive}.}} \end{proof}

\section{{\clb Transition to later phases}}

{\clr We now restrict our attention to equation \equ{eu} with $f(u)=-u+|u|^{p-1}u$ and $n\ge 3$.
From Table 1 in the appendix, we see that all the assumptions specified in Sections 2 and 4 are satisfied.}

\subsection{{\clb The phase transition lemma}} Inspired by {\clb our proof that the variation $v(r)$ corresponding to a nodal solution $u(r)$ vanishes exactly once in Phase 1, we attempt to prove the same for} each subsequent phase. Our major tool is the function
{\clb \bl{g2} T_2(r):=Q(r)-g_2(u(r))M(r), \quad g_2(u):=2F(u)/[uf(u)-2F(u)]. \ee
For the model nonlinearity $f$, there is a simple relation $g_1(u)=g_2(u)+|u|^{1-p}$, from which
we derive the crucial identity for $T_1$ and $T_2$:
\bl{t2} T_2(r)=T_1(r)+|u|^{1-p}M(r).\ee}
{\clb The function $T_2$, together with $Q$ and $M$,} serves our purpose for two reasons:

(i) {\it Sufficiency}: The positivity of $Q$, $M$, and $T_2$ at $c_{i-1}$,  the starting point of Phase $i$ interval $(c_{i-1}, c_i]$, is sufficient to ensure that $v$ changes sign exactly once in Phase $i$.

(ii) {\it Renewability}: Their positivity will be regained at the right end $c_i$.

Now we state the principle result of this section:

\begin{lemma}\label{ptl} {\bf (Phase Transition Lemma)} Let $u$ be a nodal solution of \equ{eu} and $k\ge 1$. Assume that $u$ has $k$ zeros if $u$ {\clb is a bound state}, or $k+1$ zeros if $u$ {\clb is not a bound state. Then $Q(c_{1}), M(c_{1}), T_2(c_{1})>0$. Furthermore, we have

(i) If $i\in \{2,  \cdots, k\}$ and $Q,\, M,\, T_2>0$ at $c_{i-1}$, then
$v$ has a unique zero $\tau_i$ in $[c_{i-1}, c_i]$ with $\tau_i\in (c_{i-1}, r_i)$, and
$Q,\, M,\, T_2>0$ at $c_{i}$.

(ii) If $u$ is not a bound state and $Q(c_{k}), M(c_{k}), T_2(c_{k})>0$, then
$v$ has a unique zero $\tau_{k+1}$ on $[c_{k}, z_{k+1}]$ with $\tau_{k+1}\in (c_{k}, r_{k+1})$.

(iii) If $u$ is a bound state and $Q(c_{k}), M(c_{k}), T_2(c_{k})>0$, then $v$ has a unique zero $\tau_{k+1}$ in $[c_k, \infty)$ with $\tau_{k+1}\in (c_{k}, r_{k+1})$. Moreover, $v$ is strictly monotone for $r$ sufficiently large, and $\lim_{r\to\infty} |v(r)|=\infty$.}
\end{lemma}

{\clb We devote most of this section to proving this lemma. Throughout the process, we maintain its basic assumption that $u$ has $k\ge 1$ zeros if $u$ is a bound state of \equ{eu}, or $k+1$ zeros if $u$ is not a bound state. In either case, $c_k$ is well-defined and $|u(c_k)|>\al_*$. The primary challenge is to establish the positivity of $T_2(c_1)$, and the positivity of $T_2(c_i)$ for $i\in \{2,  \cdots, k\}$ if $Q,\, M,\, T_2>0$ at $c_{i-1}$. Our proof requires crucial assistances from the bridging function $B_a(r)$, and also} from the following connection identity
\bl{conn} Q(r)-P(r)\cdot \frac vu=
\omega(r) \left[{M(r)}- \va(r)\right], \quad \va(r):=\frac{p-1}{p+1}\cdot \frac {r^{n-1}v}{u'}\cdot {|u|^{p+1}},  \ee
{\clb which connects $Q$, $M$, $P$, $\omega$, and $\va$. Through the important equalities}
 \be T_2'(r)&=& (p-1)r^{n-1}uv-\frac {(p+1)uu'}{|u|^{p+1}}\cdot M(r) \label{t2p}\\
&=& -(p+1)\cdot \frac{uu'}{|u|^{p+1}}\cdot \left[{M(r)}-\va(r) \right], \label{t2p2} \ee
we can connect $T_2$ with $Q$, $M$, and $P$. {\clb In Lemma \ref{b11},
we show that $v$ changes sign in Phase $i$ with its first zero at $\tau_i\in (c_{i-1}, r_i)$.
When $\tau_i\ge b_i$, a direct calculation using \equ{t2p} yields $T_2>0$ on $[c_{i-1}, b_i]$.
The same is true when $\tau_i< b_i$, but its proof requires a tremendous effort, beginning with the crucial step to show that $v$ has no zeros in $(\tau_i, b_i]$ by using the connection identity. Assume that $T_2$ attains its minimum on $[\overline b_i, c_i]$ at $\overline t_i$, and define $t_i\in (c_{i-1}, b_i]$ by $|u(t_i)|=|u(\overline t_i)|$.
To reach our goal $T_2(c_i)>0$, it suffices to show that $T_2(\overline t_i)\ge T_2(t_i)$.
As $T_2$ is undefined at
$z_i\in (t_i, \overline t_i)$, we transform $T_2(\overline t_i)\ge T_2(t_i)$ into an integral condition of $B'_a(r)$ on $[t_i, \overline t_i]$. With additional conditions imposed, Lemmas \ref{b14}-\ref{qrn} provide several delicate estimates that facilitate our validation of the integral condition in Lemma \ref{t2c1}. Finally, we obtain $T_2(c_i)>0$ by verifying all the conditions required in Lemmas \ref{b14}-\ref{t2c1}.}

\subsection{Locating the zeros of $v(r)$ and keeping a record of signs}

\begin{lemma}\label{b11} Suppose $Q(c_{i-1}), \, M(c_{i-1})>0$ for a number $i\in \{2,  \cdots, k, {\clb k+1}\}$. Then $v(r)$ changes sign in $(c_{i-1}, r_i)$. Let $\tau_i$ be its first zero in $(c_{i-1}, r_i)$. Then
\bl{cip} uf(u),\, f'(u),\, uv,\, u'v'>0, \quad uu',\, uv',\, u'v,\, vv'<0, \quad  r\in (c_{i-1}, \tau_i).\ee
In addition, \equ{cip} holds for $i=1$ {\clb with $c_0=0$}.
\end{lemma}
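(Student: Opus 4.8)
The plan is to reduce the statement to the signs of $v$ and $v'$ at the left endpoint $c_{i-1}$ (or at $r=0$ when $i=1$), and then to propagate those signs across $(c_{i-1},\tau_i)$ using the phase decomposition \equ{oi}--\equ{ei} together with the linear equation in the form \equ{rnvp}. For $i\ge 2$, I would first read off what $Q(c_{i-1}),M(c_{i-1})>0$ says at the critical point: since $u'(c_{i-1})=0$, the definitions \equ{qm} collapse to $Q(c_{i-1})=c_{i-1}^{n}f(u(c_{i-1}))v(c_{i-1})$ and $M(c_{i-1})=-c_{i-1}^{n-1}u(c_{i-1})v'(c_{i-1})$, and by Prop.\,\ref{basic11}\,(iii) we have $|u(c_{i-1})|>\alpha_*>1$, so $f(u(c_{i-1}))=u(c_{i-1})\big(|u(c_{i-1})|^{p-1}-1\big)$ has the same sign as $u(c_{i-1})$. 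Hence $Q(c_{i-1})>0$ forces $u(c_{i-1})v(c_{i-1})>0$ and $M(c_{i-1})>0$ forces $u(c_{i-1})v'(c_{i-1})<0$, with all factors nonzero; in particular $v(c_{i-1})v'(c_{i-1})<0$. I would also note that $v$ is bounded, hence well defined, on $[c_{i-1},r_i]$: there $|u|\ge 1$, so $f'(u)\ge p-1>0$, and the monotonicity of $r^{n-1}v'$ from \equ{rnvp} rules out blow-up, exactly as in the first paragraph of the proof of Lemma \ref{cs}. With $v(c_{i-1})v'(c_{i-1})<0$ in hand, the last assertion of Lemma \ref{cs}\,(ii) gives that $v$ changes sign in $(c_{i-1},r_i)$; I let $\tau_i$ be its first zero there, which is genuinely $>c_{i-1}$ since $v(c_{i-1})\ne 0$.

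Next I would run the sign bookkeeping on $(c_{i-1},\tau_i)$. Write $\sigma:=\mathrm{sgn}\,u$ on $(c_{i-1},z_i)$, which by \equ{oi}--\equ{ei} is the constant $(-1)^{i-1}$ and coincides with $\mathrm{sgn}\,u(c_{i-1})$; on that interval $u$ has no critical point by Prop.\,\ref{basic11}\,(iii), so $u'$ has the constant sign $-\sigma$, i.e.\ $uu'<0$. Since $|u|>1$ on $(c_{i-1},r_i)$, we also get $f'(u)>0$ and $uf(u)>0$ there. Because $v(c_{i-1})$ has sign $\sigma$ and $\tau_i$ is the first zero of $v$, $v$ keeps the sign $\sigma$ on $(c_{i-1},\tau_i)$, so $uv>0$ and $u'v<0$. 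Finally, on $(c_{i-1},\tau_i)$ the right-hand side of \equ{rnvp} equals $-r^{n-1}f'(u)v$, of constant sign $-\sigma$, so $r^{n-1}v'$ is strictly monotone in the direction $-\sigma$; since it already has sign $-\sigma$ at $c_{i-1}$ (from $u(c_{i-1})v'(c_{i-1})<0$), it keeps that sign throughout, so $v'$ has sign $-\sigma$. Combining, $u'v'>0$, $uv'<0$, $vv'<0$, which together with $uf(u)>0$, $f'(u)>0$, $uv>0$, $uu'<0$, $u'v<0$ is precisely \equ{cip}.

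For $i=1$ the first step does not apply, since $c_0=0$ and $Q(0)=M(0)=0$; instead I would invoke Lemma \ref{rcs}\,(i), which already furnishes $\tau_1\in(0,r_1)$ with $v>0$ on $(0,\tau_1)$ and $v(\tau_1)=0$. On $(0,z_1)$ one has $u>0$ and $u'<0$ (Prop.\,\ref{basic11}), so $\sigma=+1$, and the bookkeeping above carries over verbatim; the only adjustment is that the monotonicity argument for $v'$ is started from $v'(0)=0$ together with $(r^{n-1}v')'=-r^{n-1}f'(u)v<0$, which forces $v'<0$ on $(0,\tau_1)$.

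There is no genuinely hard estimate involved: the lemma is a disciplined chase of signs, and the only place where the particular structure enters is the translation at $c_{i-1}$ --- exploiting $u'(c_{i-1})=0$ and $|u(c_{i-1})|>1$ to convert $Q(c_{i-1}),M(c_{i-1})>0$ into the opposite-sign relation between $v(c_{i-1})$ and $v'(c_{i-1})$ --- together with the availability of $|u|>1$ on the whole of $(c_{i-1},r_i)$, on which both $f'(u)>0$ and $uf(u)>0$, and the non-blow-up of $v$, rely. Once these are secured, the two parities $i$ odd and $i$ even run in exact parallel.
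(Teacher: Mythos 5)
Your proposal is correct and follows essentially the same route as the paper: translate $Q(c_{i-1}),M(c_{i-1})>0$ into $u v>0$ and $uv'<0$ at $c_{i-1}$ using $u'(c_{i-1})=0$ and $|u(c_{i-1})|>\alpha_*$, invoke Lemma \ref{cs}\,(ii) for the sign change in $(c_{i-1},r_i)$, and propagate the signs of $v$ and $v'$ across $(c_{i-1},\tau_i)$ via the monotonicity of $r^{n-1}v'$ from \equ{rnvp}. The only cosmetic differences are your unified parity bookkeeping via $\sigma=(-1)^{i-1}$ and citing Lemma \ref{rcs}\,(i) instead of Lemma \ref{phasel1} for the case $i=1$, both of which are harmless.
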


\begin{proof} Lemma \ref{phasel1} indicates that $v(r)$ has a unique zero $\tau_1$ in the first phase $[0, c_1]$ and $\tau_1\in (0, r_1)$. For $r\in (c_0, \tau_1)=(0, \tau_1)$, we have $u>1$, $u'<0$, $v>0$, and also \equ{rnvp} implies that $v'<0$. It is then clear that \equ{cip} holds for $i=1$.

Let $i\in \{2,  \cdots, k, {\clb k+1}\}$. {\clb At $c_{i-1}$, we have $u'=0$ and $|u|>\alpha_*$.  It follows from $Q(c_{i-1})>0$ and $M(c_{i-1})>0$} that
\bl{ci1p}  f(u)v>0, \quad uv'<0, \quad \Rightarrow \quad uv>0, \quad vv'<0, \quad {\rm at}\quad r=c_{i-1}.  \ee
According to Lemma \ref{cs}\,(ii), {\clb $v(r)$ changes sign in $(c_{i-1}, r_i)$ if $i\in \{2,  \cdots, k\}$, or $i=k+1$ and $u$ is not a bound state. If $i=k+1$ and $u$ is a bound state, then $u$ has only $k$ zeros and Lemma \ref{cs} does not apply directly; nevertheless, there is a unique $r_{k+1}>c_k>z_k$ such that $|u(r_{k+1})|=1$. By computing $\varrho(c_k)$ and $\varrho(r_{k+1})$ and evaluating $\varrho'$ in $(c_k, r_{k+1})$, it is seen that $v(r)$ must change sign in $(c_{k}, r_{k+1})$.} Let $r\in (c_{i-1}, \tau_i)$. Then $|u|>1$, {\clb which in turn implies} $uf(u), \, f'(u)>0$; {\clb moreover, $uu'<0$ since
$u<0$ and $u'>0$ for even $i$, and $u>0$ and $u'<0$ for  odd $i$.}

Note that $u$, $u'$, and $v$ do not change sign in $(c_{i-1}, \tau_i)$. We show that the same is true for $v'$. Indeed, if $v'(c_{i-1})>0$, then $v(c_{i-1})<0$ by \equ{ci1p} and so $v(r)<0$ in $(c_{i-1}, \tau_i)$. It follows from \equ{rnvp} that $r^{n-1}v'$ increases and so $v'(r)>0$ in this interval. Similarly, if $v'(c_{i-1})<0$, then $v(r)>0$ and \equ{rnvp} gives $v'(r)<0$ in $(c_{i-1}, \tau_i)$. Consequently, the {\clb assertions} for $uv$, $uv'$, and $vv'$ in \equ{cip} follow from \equ{ci1p}.  Finally, $u'v'>0$ in $(c_{i-1}, \tau_i)$ since $u^2(u'v')=(uu')(uv')>0$, and $u'v<0$ since $(uv)(u'v)=(uu')v^2<0$.  \end{proof}

\subsection{Positivity of $Q_n(r)$ {\clb conditional on the positivity of $Q_1(r)$}}
\begin{lemma}\label{b14} {\clb Assume that}
\bl{qmcq1} Q(c_{i-1}), \, M(c_{i-1})>0;\quad Q_1(r)>0,\,\,r\in [c_{i-1}, c_i],\quad i\in \{2,  \cdots, k\}. \ee
{\clb Then $v(r)$ has a unique zero $\tau_i$ on $[c_{i-1}, c_i]$ and $\tau_i\in (c_{i-1}, r_i)$.
Moreover, if $n=3$ and $p\in [2, 5)$, or $n\ge 4$, then $Q_n(r)=Q(r)+r^{n-1}u'v>0$ on $[b_i, c_i]$ for $i\in \{1, 2, \cdots, k\}$.}{\footnote{We see from \equ{qn} that $Q_n(r)<0$ for small $r>0$; hence $[b_i, c_i]$ may not be extended to the whole phase $(c_{i-1}, c_i]$ in general.}} \end{lemma}

\begin{proof} {\clb By Lemma \ref{b11}, $v(r)$ changes sign in $(c_{i-1}, r_i)$
and $u'v'>0$ at its first zero $\tau_i$ in $(c_{i-1}, r_i)$.} If $v$ has another zero, say $\widetilde \tau_i$, next to $\tau_i$ in $(\tau_i, c_i]$, then $u'(\widetilde \tau_i)\cdot v'(\widetilde \tau_i)< 0$ {\clb and $Q(\widetilde \tau_i)<0$ if $\widetilde \tau_i<c_i$, or $u'(\widetilde \tau_i)\cdot v'(\widetilde \tau_i)= 0$ and $Q(\widetilde \tau_i)=0$ if $\widetilde \tau_i=c_i$, either of which contradicts \equ{qmcq1}. Thus $v(r)$ has a unique zero $\tau_i$ on $[c_{i-1}, c_i]$.

When $\tau_i \le b_i$, it is straightforward to prove $Q_n(r)>0$ on $[b_i, c_i]$. Indeed,}
because $v$ changes sign exactly once at $\tau_i$ and $u'$ does not change sign in $(c_{i-1}, c_i)$, it follows from \equ{cip} that $u'v>0$ in $(\tau_i, c_i)$.
{\clb Consequently, $\dsp Q_n(r)\ge Q_1(r)>0$ on $[\tau_i, c_i]\supset [b_i, c_i]$.}

We now deal with the subtle case that $b_i<\tau_i$. Define
 \bl{db1} s_i=\inf \{s:\, s\in [c_{i-1}, c_i],\,\,\, Q_n(r)>0\, \, {\rm for}\,\, r\in (s, c_i]\}.\ee
Then $s_i<\tau_i$. In Phase 1, we have $s_1>c_0=0$. When $i>1$, it may happen that $Q_n(r)>0$ for all $r\in [c_{i-1}, c_i]$ because $Q_n(c_{i-1})=Q(c_{i-1})>0$. Of course, if this does happen, then $s_i=c_{i-1}$ and we have nothing to prove.

In what follows, we only need to consider the case $s_i>c_{i-1}$. {\clb By definition \equ{db1},}  $Q_n(s_i)=0$ and $Q_n(r)>0$ in $(s_i, c_i]$. We claim that
\bl{ffp} \frac {2(n-1)f(u)}{uf'(u)}|_{u=u(s_i)}>\omega(s_i)=-\frac{s_iu'}{u}.\ee
As $s_i\in (c_{i-1}, \tau_i)$, we can use \equ{cip} to check that both quantities in the two sides of \equ{ffp} are positive. As $Q_n(s_i)=0$, from the definition of $Q_n$ we find
\ben s_i[u'v'+f(u)v]=-2(n-1)u'v \quad \Rightarrow \quad -u'v|_{r=s_i}=\frac {s_i[u'v'+f(u)v]}{2(n-1)}.\een
{\clb Note that $\rho(0)=0$, and $\rho(c_{i-1})=-c_{i-1}^{n-1}f(u)v'>0$ for $i>1$ by \equ{rho}.
Since \equ{rhop} implies $\dsp \rho'(r)=p(p-1)r^{n-1}u|u|^{p-3}u^{\prime 2}v>0$ in $(c_{i-1}, \tau_i)$, we obtain}
\bl{rsi} \rho(s_i)=s_i^{n-1} \left[ f'(u)u'v-f(u)v'\right]>\rho(c_{i-1})\ge 0. \ee
With a careful examination on the signs of relevant quantities in \equ{cip}, we then find
\ben -\,\frac {f(u)v'}{f'(u)}|_{r=s_i}>-u'v|_{r=s_i}=\frac {s_i[u'v'+f(u)v]}{2(n-1)}>\frac {s_iu'v'}{2(n-1)}.\een
{\clb Multiplying these terms by $-\,2(n-1)/(uv')>0$ yields \equ{ffp}.}

Recall from Prop.\,\ref{basic2}\,(i) that $P(r)$ is universally positive in $(0, c_k]$. Since $uu'<0$ at $r=b_i$, we obtain an important {\clb inequality} $\omega(b_i)>n-2$ from
\bl{pb1n2} 0<P(b_i)=b_i^n u'^2+(n-2)b_i^{n-1}uu'=b_i^{n-1}|uu'|\cdot [\omega(b_i)-(n-2)].\ee

Apparently, $Q_n(r)>0$ on $[b_i, c_i]$ if and only if $s_i<b_i$. Suppose for contradiction that $s_i\ge b_i$. Then $\omega(s_i)\ge \omega(b_i)>n-2$ according to
Prop.\,\ref{basic2}\,(ii). By using \equ{ffp} and the exact forms of $f(u)$, we derive
\bl{b1} \frac {2(n-1)f(u)}{uf'(u)}|_{u=u(s_i)}>n-2 \quad \Rightarrow \quad [2(n-1)-p(n-2)]|u(s_i)|^{p-1}>n. \ee
If $n\ge 4$ and $1<p<(n+2)/(n-2)$, then
$2(n-1)-p(n-2)>0$. Since $|u(r)|$ decreases in $(c_{i-1}, \tau_i)$ and $s_i\in [b_i, \tau_i)$, it holds that $|u(s_i)|\le |u(b_i)|=\alpha_*$. The second inequality in \equ{b1} and $\al_*^{p-1}=(p+1)/2$ then lead to
\ben (p+1)[2(n-1)-p(n-2)]>2n \quad \Rightarrow \quad  [(n-2)p-2](p-1)<0, \een
which is apparently impossible if $p>1$ and $n\ge 4$. When $n=3$, \equ{b1} becomes $\dsp (4-p)|u(s_i)|^{p-1}>3$, which cannot be true if $p\ge 4$. If $2\le p<4$, then
substituting $|u(s_i)|\le \alpha_*$ into \equ{b1} leads to
\ben  (4-p)(p+1)>6\quad \Rightarrow \quad p^2-3p+2=(p-1)(p-2)<0.\een
It contradicts $p\ge 2$ and completes our proof.  \end{proof}

\subsection{{\clb Positivity of an integral} for the residual case $n=3$ and $p\in (1, 2)$}
It is unclear if the conclusion that $Q_n(r)>0$ on $[b_i, c_i]$ in Lemma \ref{b14}
 {\clb can be extended to} this residual case, because no contradictions {\clb follow immediately} from \equ{b1}. {\clb By the definition of $s_i$, this conclusion may fail only if}
\bl{sbt1} b_i<s_i<\tau_i.\ee
{\clb To pave the way for our later proof that $T_2(c_i)>0$, we establish the positivity of an integral involving $Q_3(r)$ over $[b_i, \tau_i]$.} Our proof {\clb relies on} a variety of delicate estimates such as $3(\sqrt e-1)\approx 1.9462<2$ and reveals more subtle properties of $u$ and $v$.

\begin{lemma}\label{b13} Let $n=3$ and $p\in (1, 2)$. Suppose \equ{qmcq1} holds and
$\tau_i>b_i$. Then
\bl{I13} I_{i}:=\int_{b_i}^{\tau_i}{u^2(r)} \left(1-\left|\frac {u(r)}{\widetilde u}\right|^{p-1}\right) \frac{Q_3(r)}{ru'^2(r)}\, dr>0\ee
for any number $\widetilde u \ge |u(b_i)|=\al_*$. \end{lemma}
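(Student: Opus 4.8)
The plan is to split the integral in \equ{I13} at the point $s_i$ defined in \equ{db1} and exploit the opposite signs of the integrand on $(b_i,s_i)$ and $(s_i,\tau_i)$. By Lemma \ref{b14}'s proof (the definition of $s_i$ and estimate \equ{sbt1}), we have $Q_3(r)<0$ on $(b_i,s_i)$ and $Q_3(r)>0$ on $(s_i,\tau_i)$, with $Q_3(s_i)=0$. On $(b_i,\tau_i)\subset (c_{i-1},\tau_i)$ the factor $u'^2>0$, $r>0$, and by \equ{cip} all the relevant signs are fixed; moreover $|u(r)|$ decreases from $|u(b_i)|=\al_*$ on this interval, so for any $\widetilde u\ge\al_*$ the factor $1-|u/\widetilde u|^{p-1}$ is $\ge 0$ on $(b_i,\tau_i)$ and is positive except possibly at the left endpoint. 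Thus the integrand is $\le 0$ on $(b_i,s_i)$ and $\ge 0$ on $(s_i,\tau_i)$, and it suffices to show the positive part dominates. First I would reduce to the ``worst'' weight $\widetilde u=\al_*$: enlarging $\widetilde u$ only shrinks $1-|u/\widetilde u|^{p-1}$ uniformly, hence shrinks both $\int_{b_i}^{s_i}$ (a negative contribution, so its absolute value decreases) — wait, this requires care, since shrinking the weight on the negative part helps but shrinking it on the positive part hurts; so instead I would prove the bound for the single worst case and note monotonicity in the direction that matters. In fact the cleanest reduction is: it is enough to prove $I_i>0$ for $\widetilde u=\al_*$, because for larger $\widetilde u$ one writes $1-|u/\widetilde u|^{p-1} = (1-|u/\al_*|^{p-1}) + (|u/\al_*|^{p-1}-|u/\widetilde u|^{p-1})$ and the second bracket is $\ge 0$ everywhere on $(b_i,\tau_i)$ while it multiplies $Q_3/(ru'^2)$ which is negative only on $(b_i,s_i)$ — so I must actually handle this split more carefully, or alternatively observe that at $r=b_i$ we have equality $|u(b_i)|=\al_*$ so the correction vanishes where $Q_3$ is most negative; I will organize the argument so the case $\widetilde u=\al_*$ is the binding one.

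For the core estimate with $\widetilde u=\al_*$, I would change variables from $r$ to $u$ on each monotone piece. On $(b_i,\tau_i)$, $u'$ has a fixed sign (say $u'<0$ for $i$ odd, $u'>0$ for $i$ even — treat one case, the other by reflection $u\mapsto-u$), so $r=r(u)$ is a smooth inverse and $dr = du/u'$. Writing $\psi(u):=u^2(1-|u/\al_*|^{p-1}) = -(p+1)F(u)\cdot 2/(p+1)\cdot$(a positive constant) — more precisely $u^2(1-|u/\al_*|^{p-1})= -\frac{p+1}{\al_*^{p-1}}\cdot$ hmm; rather use \equ{Fb}: $F(u)=\frac{u^2}{p+1}(|u|^{p-1}-\al_*^{p-1})$, so $u^2(1-|u/\al_*|^{p-1}) = -\frac{p+1}{\al_*^{p-1}}F(u)$, a clean multiple of $-F(u)$. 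Then $I_i = -\frac{p+1}{\al_*^{p-1}}\int \frac{F(u)Q_3(r)}{r u'^3}\,du$ with the sign of $u'^3$ tracked. The key structural tool is the connection identity \equ{conn} together with \equ{dvarn}: $\varphi_3(r)=Q_3(r)/(ru'^2)$ and $B_0'(r)=-2F(u)\varphi_3(r)$, so in fact $\int_{b_i}^{\tau_i} \frac{-2F(u)Q_3}{ru'^2}\,dr = B_0(\tau_i)-B_0(b_i)$. Hence, up to a positive constant, $I_i$ equals $B_0(\tau_i)-B_0(b_i)$! This is the decisive simplification: I would rewrite $I_i = \frac{p+1}{2\al_*^{p-1}}\big(B_0(\tau_i)-B_0(b_i)\big)$ and then prove $B_0(\tau_i)>B_0(b_i)$. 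Using the definition \equ{bobo}, $B_0(r)=Q(r)-2F(u)\,r^{n-1}v/u'$, evaluate at the two endpoints. At $r=\tau_i$, $v(\tau_i)=0$, so $B_0(\tau_i)=Q(\tau_i)$. At $r=b_i$, $u(b_i)=-\al_*$ (for $i$ odd; $+\al_*$ even) so $F(u(b_i))=0$, giving $B_0(b_i)=Q(b_i)$. Therefore $I_i>0 \iff Q(\tau_i)>Q(b_i)$.

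Finally I would prove $Q(\tau_i)>Q(b_i)$ directly from $Q'=2r^{n-1}f(u)v$ (\equ{qp}). On the subinterval $(b_i,\tau_i)$ we have $u'v<0$ and $v$ does not change sign (it is nonzero until $\tau_i$), with $v$ of the sign opposite to $u'$; by \equ{cip}, $uv>0$. On $(b_i,r_i)$, $f(u)$ and $u$ have the same sign so $f(u)v$ has the sign of $uv>0$, whence $Q'>0$ there; on $(r_i,\tau_i)$ (if $\tau_i>r_i$ — but recall $\tau_i\in(c_{i-1},r_i)$ by Lemma \ref{b11}, so actually $\tau_i<r_i$ and we never leave the region $|u|>1$!). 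Since $\tau_i<r_i$, on all of $(b_i,\tau_i)$ we have $|u|\in(\al_*,\ldots)\supset$ at least $|u|>1$, so $f(u)$ has the sign of $u$, $f(u)v$ has the sign of $uv>0$, and $Q'(r)>0$ throughout $(b_i,\tau_i)$. Hence $Q(\tau_i)>Q(b_i)$, giving $I_i>0$ for $\widetilde u=\al_*$. For general $\widetilde u\ge\al_*$ I then use the decomposition noted above: writing the integrand with weight $\widetilde u$ as (weight with $\al_*$) $+$ (nonnegative correction)$\cdot Q_3/(ru'^2)$; on $(b_i,s_i)$ the correction is multiplied by $Q_3<0$, hurting us, but the correction vanishes at $b_i$ and is controlled — here a short direct estimate comparing $\int_{b_i}^{s_i}$ against part of $\int_{s_i}^{\tau_i}$ is needed. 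The main obstacle is precisely this last step — propagating positivity from the worst weight $\widetilde u=\al_*$ to all larger $\widetilde u$ — and the small numerical inequalities (e.g. $3(\sqrt e-1)<2$) alluded to in the text; I expect these to enter when bounding the correction term on $(b_i,s_i)$ by quantities on $(s_i,\tau_i)$, using the explicit form of $Q_3$ and the estimate $\omega(b_i)>n-2=1$ from \equ{pb1n2}. Everything before that reduces, pleasingly, to the identity $I_i \propto Q(\tau_i)-Q(b_i)$ and the sign of $Q'$.
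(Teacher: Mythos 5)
Your endpoint computation for $\widetilde u=\al_*$ is correct and is genuinely slicker than anything in the paper: since $-2F(u)=\frac{2\al_*^{p-1}}{p+1}\,u^2\bigl(1-|u/\al_*|^{p-1}\bigr)$, identity \equ{bop} gives $I_i=\frac{p+1}{2\al_*^{p-1}}\bigl(B_0(\tau_i)-B_0(b_i)\bigr)=\frac{p+1}{2\al_*^{p-1}}\bigl(Q(\tau_i)-Q(b_i)\bigr)$ (using $v(\tau_i)=0$ and $F(u(b_i))=0$), and $Q'=2r^{2}f(u)v>0$ on $(b_i,\tau_i)$ by \equ{cip}, so $I_i>0$ in that single case. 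But the passage to general $\widetilde u\ge\al_*$ is not the ``short direct estimate'' you defer to the end; it is the entire content of the lemma, and your proposal leaves it open. Note also that the lemma is invoked in Lemma \ref{t2c1} with $\widetilde u=|u(t_i)|$, which is strictly larger than $\al_*$ precisely in the hard case $t_i<b_i$, so the endpoint value is not the one that is needed.

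Two concrete problems with the extension step. First, your stated reason for optimism is false: the correction $|u/\al_*|^{p-1}-|u/\widetilde u|^{p-1}$ does \emph{not} vanish at $r=b_i$; it equals $1-(\al_*/\widetilde u)^{p-1}>0$ there. It is the base weight $1-|u/\al_*|^{p-1}$ that vanishes at $b_i$, so the correction is not small where $Q_3$ is most negative. Second, and more seriously, writing $I_i(\widetilde u)=A-\widetilde u^{1-p}B$ with $A=\int_{b_i}^{\tau_i}u^2\varphi_3\,dr$ and $B=\int_{b_i}^{\tau_i}|u|^{p+1}\varphi_3\,dr$ shows that $I_i$ is \emph{linear} in $t=\widetilde u^{1-p}\in(0,\al_*^{1-p}]$; positivity on this whole interval requires, besides your endpoint $t=\al_*^{1-p}$, the limiting endpoint $A\ge 0$ as $t\to 0^{+}$. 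The quantity $A$ admits no bridge-function evaluation: integrating by parts via \equ{dvarn} gives $A=\bigl[u^2r^{2}v/u'\bigr]_{b_i}^{\tau_i}-2\int_{b_i}^{\tau_i}r^{2}uv\,dr$, a positive boundary term competing with a negative interior term, so one is back to showing that the positive mass of $\varphi_3$ on $(s_i,\tau_i)$ dominates the negative mass on $(b_i,s_i)$ against a $\widetilde u$-independent weight. That is exactly what the paper's proof does (freeze the weight at $s_i$, reduce to $\int_{b_i}^{\tau_i}Q_3/\omega^2\,dr>0$, and establish this via the observations on monotonicity of $Q_3$, $|u'|$, $r^2|v'|$, the reflection $s_i^{-}\mapsto s_i^{+}$, and the numerical bounds such as $3(\sqrt e-1)<2$). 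None of that work is bypassed by your reduction, so the proposal as written has a genuine gap at its decisive step.
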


\begin{proof} {\clb By the definition of $s_i$, $Q_3(r)>0$ in $(b_i, \tau_i)$ and \equ{I13} holds trivially if $s_i\le b_i$.} Hence we only need to deal with the case when \equ{sbt1} holds.
We first present several supporting observations over $(b_i, \tau_i)$, listed in {\it S1 -- S8}:

\noindent
{\it S1: $Q(r),\, M(r)>0$ and $\dsp -rv'(r)/v(r)>1$ in $(b_i, \tau_i)$}. Lemma \ref{phasel1} assures that $Q,\,M>0$ in $(b_1, \tau_1)$. For $i>1$, \equ{qmcq1} gives $Q(c_{i-1}),\, M(c_{i-1})>0$. By checking $Q'$ and $M'$, we see that $Q$ and $M$ increase in $(c_{i-1}, \tau_i)$. Hence $Q,\, M>0$ in $(b_i, \tau_i)\subset
(c_{i-1}, \tau_i)$. {\clb Since the universal positivity of $P(r)$ implies $\omega(b_i)>n-2=1$; see \equ{pb1n2}, the monotonicity of $\omega$ further implies that $\omega(r)>\omega(b_i)>1$ for $r\in (b_i, \tau_i)$.} Thus $M(r)>0$ leads to $\dsp u'v>uv'$, and in turn,
$\dsp -rv'(r)/v(r)>\omega(r)>1$ {\clb for $r\in (b_i, \tau_i)$}.

\noindent
{\it S2: The ratio $f(u)v/(u'v')$ {\clb decreases strictly with $r$} in $(b_i, \tau_i)$}.
Let $p\in (1, 2)$, and $r\in (b_i, \tau_i)$. Then $\omega(r)>1$, $\dsp |u(r)|^{p-1}< |u(b_i)|^{p-1}=(p+1)/2<2/(3-p)$, and
\ben  \frac{3f(u)}{uf'(u)}-\omega(r)<\frac{3f(u)}{uf'(u)}-1
=\frac{3-p}{f'(u)}\left(|u|^{p-1}-\frac2{3-p}\right)<0. \een
{\clb With the aid of {\it S1}, a direct computation yields}
\ben u'v'\cdot\frac{d}{dr}\left(\frac{f(u)v}{u'v'}\right)&=&
f'(u)u'v+f(u)v'+\frac{f(u)v}{u'v'}\left(\frac{4}ru'v'+f(u)v'+f'(u)u'v\right)\\
&<& f'(u)u'v+f(u)v'+{4}f(u)v/r\\
&=& \frac{f(u)v}{r}\left(\frac{rv'}v+1\right)+\frac 1rf'(u)uv\left(\frac{3f(u)}{uf'(u)}-\omega(r)\right)<0.\een

\noindent
{\it S3: $Q'_3(r)>0$ in $(b_i, \tau_i)$}.  {\clb Since}
\bl{vnp} Q'_3(r)=r^{2} \left[3u'v'-f(u)v\right]=r^{2}u'v' \left[3-{f(u)v}/(u'v')\right],\ee
$Q'_3(\tau_i)=3\tau_i^{2}u'v'>0$. If {\it S3} were false, then there would be $r_*\in (b_i, \tau_i)$ such that $Q'_3(r_*)=0$. Hence $f(u)v/(u'v')=3$ at $r_*$, and {\it S2} implies that $Q_3(r)$ takes the absolute minimum value on $[b_i, \tau_i]$ at $r_*$. However, {\clb the definition of $Q_n(r)$ gives}
\ben {Q_3(r_*)}/{r_*^{2}}=r_*[u'v'+f(u)v]+4u'v=4r_*u'v'+4u'v=4u'v(1+r_*v'/v)>0. \een
It results in $Q_3(r)\ge Q_3(r_*)>0$ on $[b_i, \tau_i]$, which violates \equ{sbt1} and confirms {\it S3}.

\noindent
{\it S4: $u'^2(r)$ decreases (strictly) in $(b_i, \tau_i)$}. Prop.\,\ref{basic21} shows that $u''$ changes sign exactly once in $(c_{i-1}, z_i)$. Since $u'(z_i)u''(z_i)=-2u'^2(z_i)/z_i<0$
and $u'$ does not change sign in $(c_{i-1}, z_i)$, there exists $r_i^*\in (c_{i-1}, z_i)$ such that $u'u''>0$ in $(c_{i-1}, r_i^*)$, and $u'u''<0$ in $(r_i^*, z_i]$. If {\it S4} were not true, then $b_i< r_i^*$ and
\ben u''(b_i)v(b_i)=-\frac{2u'v}{b_i}-f(u)v< 0 \quad \Rightarrow \quad -\frac{2u'v}{b_i}< f(u)v|_{r=b_i}. \een
As {\it S3} gives {\clb $Q_3(b_i)< Q_3(s_i)=0$, we deduce} that, at $r=b_i$,
\bl{s1up} u'v'+f(u)v<-\frac{4u'v}{b_i}< 2f(u)v \quad \Rightarrow\quad 2u'v'<u'v'+f(u)v.\ee
Since {\clb $\rho(c_{i-1})\ge 0$ and $\dsp \rho'(r)>0$ in $(c_{i-1}, \tau_i)$,}
$\dsp \rho(b_i)=b_i^{n-1} \left[ f'(u)u'v-f(u)v'\right]>0$, which implies that, with the help of the first inequality in \equ{s1up},
\ben -\,f(u)v'|_{r=b_i}>\frac{b_if'(u)}{4}\cdot \left(\frac{-4u'v}{b_i}\right)
>\frac{b_if'(u)}{4}\cdot [u'v'+f(u)v].\een
{\clb Together with the second part of \equ{s1up}, this implies} that, again at $r=b_i$,
\ben 2b_iu'v'< b_i[u'v'+f(u)v] < -\frac{4f(u)v'}{f'(u)} \quad\Rightarrow\quad
 \frac{2f(u)}{uf'(u)}>
-\frac {b_iu'}u=\omega(b_i)>1. \een
However, continuing from the last inequality leads to
\ben 1<\frac{2f(u)}{uf'(u)}|_{r=b_i}=\frac {2(-1+|u(b_i)|^{p-1})}{-1+p|u(b_i)|^{p-1}}=\frac{p-1}{p(p+1)/2-1}, \een
which is invalid for any $p>1$. Thus {\it S4} is verified.

\noindent
{\clb {\it S5: $0<|u(b_i)|-|u(s_i)|<\sqrt e-\sqrt[3]{e}\approx 0.2531$.} Since $Q_3(s_i)=0$, $\rho(s_i)>\rho(b_i)>0$, and $\omega(s_i)>\omega(b_i)>1$, it follows that $\dsp |u(s_i)|>\left[3/(4-p)\right]^{1/(p-1)}$; see also \equ{b1}. Observing that
$\dsp |u(b_i)|=\left[(p+1)/2\right]^{1/(p-1)}\to \sqrt e$ as $p\downarrow 1$ and decreases in $p>1$, and $\left[3/(4-p)\right]^{1/(p-1)}\to \sqrt[3]{e}$ as $p\downarrow 1$ and increases in $p\in (1, 2)$, we obtain {\it S4}.}

\noindent
{\it S6:} $\dsp 3|u(s_i)-u(\tau_i)|>\tau_i|u'(\tau_i)|$. {\clb Note that $f'(u)>0$ and $vv'<0$ in $(c_{i-1}, \tau_i)$. We have $\dsp (r^{2}|v'|)'=r^{2}f'(u)v>0$ if
$v>0$ and $v'<0$, or $\dsp (r^{2}|v'|)'=-r^{2}f'(u)v>0$ if
$v<0$ and $v'>0$ in $(c_{i-1}, \tau_i)$. By integrating \equ{vnp}, we estimate}
 \ben && Q_3(\tau_i)=3\int_{s_i}^{\tau_i} r^{2} u'v'\, dr-\int_{s_i}^{\tau_i} r^{2}f(u)v\, dr
 < 3\int_{s_i}^{\tau_i} r^{2} u'v'\, dr\\
 &<&3\tau_i^{2}|v'(\tau_i)|\int_{s_i}^{\tau_i}|u'|\, dr=
 3\tau_i^{2}|v'(\tau_i)|\cdot|u(s_i)-u(\tau_i)|. \een
{\clb Given that $Q_3(\tau_i)=\tau_i^3|u'(\tau_i)||v'(\tau_i)|$, {\it S6} follows at once.}

\noindent
{\it S7: The differences $u(\tau_i)-u(s_i)$ and $u(s_i)-u(b_i)$ obey}
\bl{btsb} \frac{u(\tau_i)-u(s_i)}{u(s_i)-u(b_i)}>\omega^2_i, \quad{\rm where}\quad
\omega_i:=\frac {\omega(\tau_i)} {\omega(b_i)}>1.\qquad \ee
To {\clb prove this, we use {\it S6}, $|u(\tau_i)|>1$, and $\omega(b_i)>1$ to estimate}
\ben 3|u(s_i)-u(\tau_i)|>\tau_i|u'(\tau_i)|=|u(\tau_i)|\cdot \omega (\tau_i)>{\omega(\tau_i)}={\omega_i\omega(b_i)}>{\omega_i}.\een
Since $1<|u(\tau_i)|<|u(s_i)|<|u(b_i)|<\sqrt e$ (see {\it S5}), we obtain
\ben  |u(b_i)-u(s_i)|<\sqrt e-1-|u(s_i)-u(\tau_i)|<\sqrt e-1-{\omega_i}/{3},\een
and also $\dsp \omega_i<3(\sqrt e-1)\approx 1.9462<2.$ We thus conclude
\ben \frac{u(\tau_i)-u(s_i)}{u(s_i)-u(b_i)}=\frac{|u(s_i)-u(\tau_i)|}{|u(b_i)-u(s_i)|}>\frac{\omega_i}{3(\sqrt e-1)-\omega_i}>\frac{\omega_i}{2-\omega_i}>\omega_i^2. \een

\noindent
{\it S8: For each $s^{-}_i\in [b_i, s_i)$, define $s_i^+\in (s_i, \tau_i)$ as the unique number in $(s_i, \tau_i)$ such that $u(s^{-}_i)-u(s_i)=u(s_i)-u(s_i^+)$.
Then}
 \bl{r1r1} Q_3(s_i^+)>|Q_3(s^{-}_i)|.\ee
The existence of $s_i^+$ follows from {\it S7}, and the uniqueness follows from the monotonicity of $u$ in $(b_i, \tau_i)$, in which $f(u)v/(u'v')<3$ and decreases by {\it S2} and {\it S3}, and
 $r^2|v'|$ increases by {\it S6}.
By integrating \equ{vnp} over $(s^{-}_i, s_i)$, we derive
\ben |Q_3(s^{-}_i)|&=& \int_{s^{-}_i}^{s_i}r^2[3u'v'-f(u)v]\, dr
<(3-\varepsilon_i)\int_{s^{-}_i}^{s_i}r^2u'v'\, dr,\quad  \varepsilon_i=\frac{f(u)v}{u'v'}|_{r=s_i}<3\\
&<& (3-\varepsilon_i)s_i^2|v'(s_i)|\int_{s^{-}_i}^{s_i}|u'|\, dr=(3-\varepsilon_i)s_i^2|v'(s_i)|\cdot|u(s^{-}_i)-u(s_i)|.\een
A similar calculation over $(s_i, s_i^+)$ leads to the confirmation of \equ{r1r1}:
\ben Q_3(s_i^+)&=&\int_{s_i}^{s_i^+}r^2[3u'v'-f(u)v]\, dr>(3-\varepsilon_i)\int_{s_i}^{s_i^+}r^2u'v'\, dr\\
&>& (3-\varepsilon_i)s_i^2|v'(s_i)|\cdot |u(s_i)-u(s_i^+)|>|Q_3(s^{-}_i)|.\een

We now work with $I_i$ directly. As $|u(r)|$ decreases and $\widetilde u \ge |u(b_i)|>|u(r)|$ in $(b_i, \tau_i)$, by using the definition of $\omega(r)$ {\clb and {\it S3}}, we find that
\ben I_i=\int_{b_i}^{\tau_i} \left(1-\left|\frac {u(r)}{\widetilde u}\right|^{p-1}\right)\frac{rQ_3(r)}{\omega^2(r)}   \, dr
> s_i\left(1-\left|\frac {u(s_i)}{\widetilde u}\right|^{p-1}\right)\int_{b_i}^{\tau_i} \frac{Q_3(r)}{\omega^2(r)}\, dr. \een
As $\omega(r)>1$ increases in $(s_i, \tau_i)$, we use $\omega_i={\omega(\tau_i)}/{\omega(b_i)}$ to derive
\bl{j13} && \int_{b_i}^{\tau_i} \frac{Q_3(r)}{\omega^2(r)}\, dr=
\int_{s_i}^{\tau_i} \frac{Q_3(r)}{\omega^2(r)}\, dr-\int_{b_i}^{s_i}\frac{|Q_3(r)|}{\omega^2(r)}\, dr
\nonumber \\
&>& 
\frac 1 {\omega^2(\tau_i)}\underbrace{\left[\int_{s_i}^{\tau_i} Q_3(r)\, dr   -\omega^2_i\int_{b_i}^{s_i}|Q_3(r)|\, dr\right]}_{J_{i}}. \ee

It suffices to prove $J_{i}>0$. As $u(r)$ is monotone in $(b_i, \tau_i)$, its inverse,
denoted by $r_u$ with $u(r_u)=u$, is well-defined. As $|u'(r)|$ decreases in $(b_i, \tau_i)$ by {\it S4}, we find
\bl{bsq} \int_{b_i}^{s_i} |Q_3(r)|\, dr< \frac 1{u'(s_i)}  \int_{b_i}^{s_i} |Q_3(r)|\cdot u'(r)\, dr
=\frac 1{u'(s_i)} \int_{u(b_i)}^{u(s_i)}{|Q_3(r_u)|}\, du.
\ee
For each $\mu$ between $u(s_i)$ and $u(b_i)$, there is a unique $x\in [0, 1]$ such that
\bl{mux} \mu=u(r_\mu)=u(s_i)+[u(b_i)-u(s_i)]x. \ee
It defines a bijection between $[0, 1]$ and $[u(b_i), u(s_i)]$ when $i$ is even and $u'>0$ in $(b_i, \tau_i)$, or $[u(s_i), u(b_i)]$ when $i$ is odd and $u'<0$ in $(b_i, \tau_i)$.
For $x$ and $\mu$ satisfying \equ{mux}, we define $\widetilde r_x=r_\mu$.
Then $\widetilde r_0=s_i$, $\widetilde r_1=b_i$, and
\bl{sbtb} \int_{b_i}^{s_i} |Q_3(r)|\, dr<\frac 1{u'(s_i)} \int_{u(b_i)}^{u(s_i)}{|Q_3(r_u)|}\, du=\frac{u(s_i)-u(b_i)}{u'(s_i)}{\int_0^1 {|Q_3(\widetilde r_x)|}\, dx}.\ee
Similarly, for each $\mu$ between $u(s_i)$ and $u(\tau_i)$, there is a unique $x\in [0, 1]$ such that
\bl{mux2} \mu=u(r_\mu)=u(s_i)+[u(\tau_i)-u(s_i)]x. \ee
For $x$ and $\mu$ satisfying \equ{mux2}, we define $\widehat r_x=r_\mu$. Then
\bl{sbtb2} \int_{s_i}^{\tau_i} Q_3(r)\, dr>\frac 1{u'(s_i)} \int_{u(s_i)}^{u(\tau_i)}{Q_3(r_u)}\, du=\frac{u(\tau_i)-u(s_i)}{u'(s_i)}\int_0^1 {Q_3(\widehat r_{x})}\, dx.
\ee
For a given $x\in (0, 1]$, write $s^{-}_i=\widetilde r_x\in [b_i, s_i)$. For the corresponding $s_i^+\in (s_i, \tau_i)$ defined in {\it S8}, it follows from \equ{mux}, \equ{mux2}, and {\it S7} that \ben && |u(s_i)-u(s_i^+)|=|u(s^{-}_i)-u(s_i)|=|u(\widetilde r_x)-u(s_i)|\\
&& =|u(b_i)-u(s_i)|x<|u(s_i)-u(\tau_i)|x=|u(s_i)-u(\widehat r_x)|.\een
Hence $\widehat r_x>s_i^+$. As $Q_3(r)$ increases in $(b_i, \tau_i)$ by {\it S3}, applying {\it S8} leads to
\ben Q_3(\widehat r_x)>Q_3(s_i^+)>|Q_3(s^{-}_i)|=|Q_3(\widetilde r_x)|,\qquad x\in (0, 1].\een
By combining this with \equ{sbtb}, \equ{sbtb2}, and {\it S7}, we finally derive
\ben J_{i} &=& \int_{s_i}^{\tau_i} Q_3(r)\, dr-\omega^2_i\int_{b_i}^{s_i}|Q_3(r)|\, dr\\
&>& \frac{u(\tau_i)-u(s_i)}{u'(s_i)}\int_0^1 {Q_3(\widehat r_{x})}\, dx-\omega^2_i\cdot\frac{u(s_i)-u(b_i)}{u'(s_i)}{\int_0^1 {|Q_3(\widetilde r_x)|}\, dx}\\
&>& \frac{u(s_i)-u(b_i)}{u'(s_i)}\left(\frac{u(\tau_i)-u(s_i)}{u(s_i)-u(b_i)}-\omega^2_i\right)
\int_0^1 {Q_3(\widehat r_{x})}\, dx>0.\een
This completes the proof of the lemma. \end{proof}

\subsection{A comparison lemma}
\begin{lemma}\label{qrn} Assume that \equ{qmcq1} holds and that, for a number $i\in \{2, \cdots, k\}$,
\bl{qrna} Q(r)>0, \,\,\, r\in (c_{i-1}, b_i]\cup [\overline b_i, c_i]; \qquad Q(\overline b_i)>Q(b_i).\ee
Define
$r_{i\mu}\in (c_{i-1}, b_i]$ and $\overline r_{i\mu}\in [\overline b_i, c_i)$ by $|u( r_{i\mu})|=| u(\overline r_{i\mu})|=\mu$, $\mu \in [\alpha_*, |u(c_i)|)$. Then
\bl{var} \varphi(\overline r_{i\mu})>\varphi(r_{i\mu}), \qquad   \varphi(r):=\frac{Q(r)}{r^{n-1}|u'(r)|}.\ee
 In particular, {\clb both \equ{qrna} and \equ{var} hold for $i=1$.}
 \end{lemma}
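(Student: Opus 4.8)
\emph{Setup.} Since $u'$ keeps a constant sign $\epsilon_i:=\operatorname{sgn}u'$ on the whole phase $(c_{i-1},c_i)$ (negative for odd $i$, positive for even $i$), write $\varphi=\epsilon_i\psi$ with $\psi(r):=Q(r)/(r^{n-1}u'(r))$. A short computation using \eqref{qp}, \eqref{rnup} and the definition \eqref{q2} of $Q_2$ gives the clean identity
\[
\psi'(r)=\frac{f(u)\,Q_2(r)}{r^{n-1}u'(r)^2}.
\]
Moreover $Q_2>0$ on $[c_{i-1},c_i]$: for $i=1$ this is in Lemma \ref{phasel1}, and for $i\ge2$ one has $Q_2'=2r^{n-1}u'v'>0$ on $(c_{i-1},\tau_i)$ by \eqref{cip} while $Q_2\ge Q_1>0$ on $[\tau_i,c_i]$ by \eqref{qmcq1} and the fact that $u'v>0$ there. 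Hence on the two ``banks'' $(c_{i-1},b_i]$ and $[\overline b_i,c_i)$, where $|u|\ge\alpha_*>1$, we have $\operatorname{sgn}\psi'=\operatorname{sgn}f(u)=\operatorname{sgn}u$, so $\varphi$, viewed as a function of $\mu=|u|$, is strictly increasing on each bank, with $\varphi(r)\to+\infty$ as $r\to c_i^-$ (note $Q(c_i)>0$ by \eqref{qrna}).

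\emph{Reduction.} Parametrize by $\mu$ and set $r=r_{i\mu}$, $\bar r=\overline r_{i\mu}$, $G(\mu):=\varphi(\bar r)-\varphi(r)$. Using $dr_{i\mu}/d\mu=-1/|u'(r_{i\mu})|$, $d\overline r_{i\mu}/d\mu=1/|u'(\overline r_{i\mu})|$ and the identity above,
\[
G'(\mu)=\beta\Big(\tfrac{\varphi_2(\bar r)}{u'(\bar r)^2}-\tfrac{\varphi_2(r)}{u'(r)^2}\Big),\qquad \varphi_2:=\tfrac{Q_2}{r^{n-1}|u'|}=\varphi+2\epsilon_i v,\quad \beta:=\mu(\mu^{p-1}-1)>0.
\]
Since $G(\alpha_*)=\varphi(\overline b_i)-\varphi(b_i)$, it suffices to prove (i) $G(\alpha_*)>0$ and (ii) $G(\mu_0)=0\Rightarrow G'(\mu_0)>0$: indeed, if $G$ had a zero, the smallest one $\mu_1>\alpha_*$ would satisfy $G'(\mu_1)\le0$ (as $G>0$ on $[\alpha_*,\mu_1)$), contradicting (ii), so $G>0$ throughout $[\alpha_*,|u(c_i)|)$.

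\emph{Proof of (i) and (ii).} For (i): at $b_i,\overline b_i$ one has $F(u)=0$, hence $E=\tfrac12u'^2$ and the Pohozaev function factors,
$P(b_i)=b_i^{n-1}|u'(b_i)|\bigl(b_i|u'(b_i)|-(n-2)\alpha_*\bigr)$ and $P(\overline b_i)=\overline b_i^{n-1}|u'(\overline b_i)|\bigl(\overline b_i|u'(\overline b_i)|+(n-2)\alpha_*\bigr)$ (the sign of $uu'$ differs at the two points); combining $P'<0$ on $[b_i,\overline b_i]$ (by \eqref{prd}, as $|u|\le\alpha_*<\alpha^*$ there), $\omega(b_i)>n-2$ (from \eqref{pb1n2}), and the hypothesis $Q(\overline b_i)>Q(b_i)>0$ from \eqref{qrna}, a direct computation yields $\varphi(\overline b_i)>\varphi(b_i)$. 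For (ii): at a zero $\mu_0$ we have $\varphi(\bar r)=\varphi(r)=:c_0>0$, so $\varphi_2=c_0+2\epsilon_i v$ at both points, and since $|u'(\bar r)|<|u'(r)|$ by Prop.\,\ref{basic1}(ii) and $\varphi_2>0$, it is enough to show $\varphi_2(\bar r)\ge\varphi_2(r)$. If $r_{i\mu_0}<\tau_i$ (equivalently $\mu_0>|u(\tau_i)|$), then $\epsilon_i v(\bar r)>0>\epsilon_i v(r)$ by \eqref{cip} and $u'v>0$ on $(\tau_i,c_i)$, so $\varphi_2(r)<c_0<\varphi_2(\bar r)$ and $G'(\mu_0)>0$. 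In the remaining case $\tau_i\le r_{i\mu_0}\le b_i$ — which forces $\tau_i<b_i$, i.e.\ $|u(\tau_i)|>\alpha_*$ — $v$ has the constant sign $\epsilon_i$ on $(\tau_i,c_i]$, so $\varphi_2=c_0+2|v|$ at both points and $G'(\mu_0)>0$ reduces to $|v(\overline r_{i\mu_0})|\ge|v(r_{i\mu_0})|$; for this I would use that $\Gamma(r):=r^{n-1}v(r)/u'(r)$ is strictly increasing on $[\tau_i,c_i]$ — by \eqref{dvarn} and $Q_n\ge Q_1>0$ there — together with the relations $r_{i\mu_0}^{\,n-1}|u'(r_{i\mu_0})|=Q(r_{i\mu_0})/c_0$, $\overline r_{i\mu_0}^{\,n-1}|u'(\overline r_{i\mu_0})|=Q(\overline r_{i\mu_0})/c_0$ and the sign information of \eqref{cip}.

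\emph{Main obstacle and remarks.} The hard part is precisely this last case: across the ``river'' $(b_i,\overline b_i)$ neither $Q$ nor $r^{n-1}|u'|$ is monotone, so at a possible tangency $\mu_0$ of the two branches of $\varphi$ the sign of $Q(\overline r_{i\mu_0})-Q(r_{i\mu_0})$ is not controlled by the energy alone; introducing the monotone quantity $\Gamma$ (hence the identity \eqref{dvarn} and the positivity $Q_n>0$ on $[\tau_i,c_i]$ established on the way to Lemma \ref{b14}) is what lets one compare $|v|$ at the two banks without walking through the river. The case $i=1$ is identical, using Lemma \ref{phasel1} in place of \eqref{qmcq1}--\eqref{qrna}, and the even-$i$ case follows from the odd-$i$ case by the substitution $u\mapsto-u$.
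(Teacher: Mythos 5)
Your overall architecture is the paper's: you rediscover the key identity $\frac{d}{dr}\frac{Q}{r^{n-1}u'}=f(u)\,\frac{Q_2}{r^{n-1}u'^2}$ (this is \equ{varp}), reduce \equ{var} to the endpoint comparison at $\mu=\alpha_*$ plus the implication ``equality of the two branches of $\varphi$ at some $\mu_0$ forces the $\mu$-derivative of the gap to be positive'' (the paper's \equ{var2} combined with \equ{vava}), and split that implication into the cases $r_{i\mu_0}<\tau_i$ and $r_{i\mu_0}\ge\tau_i$. Your case $r_{i\mu_0}<\tau_i$ is correct and matches the paper. But two steps are genuinely gapped.

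First, the base case $\varphi(\overline b_i)>\varphi(b_i)$. Since $Q(\overline b_i)>Q(b_i)>0$ is hypothesized, what you need is $\overline b_i^{\,n-1}|u'(\overline b_i)|\le b_i^{\,n-1}|u'(b_i)|$, i.e.\ monotonicity of $r^{2(n-1)}u'^2$ between the two points where $F(u)=0$. Your route via $P'<0$ controls the wrong power of $r$: writing $A=b_i^{n-1}|u'(b_i)|$, $B=\overline b_i^{\,n-1}|u'(\overline b_i)|$, the inequality $P(\overline b_i)<P(b_i)$ only yields $\overline b_i^{\,n}u'(\overline b_i)^2<b_i^{\,n}u'(b_i)^2$ (after absorbing the sign-changing cross term $(n-2)r^{n-1}uu'$), and since $\overline b_i>b_i$ this is strictly weaker than $B\le A$; one can choose admissible numbers with $P$ decreasing, $\omega(b_i)>n-2$, and yet $B>A$ by a factor large enough to reverse \equ{var} at $\mu=\alpha_*$. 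The correct tool is $\widehat E(r):=r^{2(n-1)}E(r)$ with $\widehat E'=2(n-1)r^{2n-3}F(u)<0$ on $(b_i,\overline b_i)$, which gives exactly $B<A$ because $E=\frac12u'^2$ at both endpoints; see \equ{H}. Second, the case $r_{i\mu_0}\ge\tau_i$. You reduce to $|v(\overline r_{i\mu_0})|\ge|v(r_{i\mu_0})|$ (which is stronger than needed and not what the paper proves) and propose to get it from the monotonicity of $\Gamma(r)=r^{n-1}v/u'$ via \equ{dvarn}. That monotonicity only gives $|v(\overline r_{i\mu_0})|>|v(r_{i\mu_0})|\cdot\bigl(r_{i\mu_0}/\overline r_{i\mu_0}\bigr)^{n-1}\,|u'(\overline r_{i\mu_0})|/|u'(r_{i\mu_0})|$, and after substituting your relations $r^{n-1}|u'|=Q/c_0$ the correction factor becomes $Q(r_{i\mu_0})/Q(\overline r_{i\mu_0})$, whose comparison with $1$ is precisely what is unknown across the ``river''. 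The fix is to drop the $r^{n-1}$: by \equ{dvup}, $(v/u')'=Q_1/(r^nu'^2)>0$, so $|v|/|u'|$ increases on $(\tau_i,c_i)$, and dividing once more by $|u'(\overline r_{i\mu_0})|<|u'(r_{i\mu_0})|$ gives the needed $|v(\overline r_{i\mu_0})|/u'^2(\overline r_{i\mu_0})>|v(r_{i\mu_0})|/u'^2(r_{i\mu_0})$ directly. With those two repairs your argument coincides with the paper's proof.
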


\begin{proof} {\clb By Lemma \ref{phasel1}, the first part of \equ{qrna} holds in Phase 1. We claim that the second part, $Q(\overline b_1)>Q(b_1)$, also holds in Phase 1.}
If {\clb $b_1 \ge s_1$, then $Q_n(r)>0$ in $(b_1, \overline b_1)$ by the definition of $s_i$. Consequently, $B_0'(r)=-2F(u)Q_n(r)/(ru'^2)>0$} in $(b_1, \overline b_1)$, {\clb and $\dsp Q(\overline b_1)=B_0(\overline b_1)>B_0(b_1)=Q(b_1)$. If $b_1<s_1$, then $b_1<\tau_1$; thus
$Q_n(r)>Q_1(r)>0$ by Lemma \ref{phasel1}, and so $B_0'(r)>0$ in $(\tau_1, \overline b_1)$. As
$Q'(r)=2r^{n-1}f(u)v>0$ in $(b_1, \tau_1)$, we obtain
$\dsp Q(\overline b_1)=B_0(\overline b_1)>B_0(\tau_1)=Q(\tau_1)>Q(b_1)$ and confirm the claim.}

Since $u(r)$ is monotone in $(c_{i-1}, c_i)$ and $|u(c_{i-1})|>|u(c_i)|>\alpha_*$ by
 Prop.\,\ref{basic11} (iii), $r_{i\mu}$ and $\overline r_{i\mu}$ are well-defined. {\clb As $\mu \uparrow |u(c_i)|$,} one has $\overline r_{i\mu}\uparrow c_i$ and $\varphi(\overline r_{i\mu}) \to \infty$. Hence \equ{var} {\clb holds trivially for $\mu$} sufficiently close to $|u(c_i)|$. When $\mu=\alpha_*$, we have
$r_{i\mu}=r_{i\alpha_*}=b_i$ and $\overline r_{i\mu}=\overline r_{i\alpha_*}=\overline b_i$.
From \equ{H} we see that $\widehat E(r)$ decreases in $(b_i, \overline b_i)$ and $\widehat E(\overline b_i)<\widehat E(b_i)$.
Because $F(u(b_i))=F(u(\overline b_i))=0$, we obtain $\overline b_i^{n-1}|u'(\overline b_i)|<{b_i^{n-1}|u'(b_i)|}$. By combining this
{\clb with \equ{qrna}}, we see clearly that
$\varphi(\overline b_i)>\varphi(b_i)$ and verify \equ{var} at $\mu=\alpha_*$.

{\clb To proceed, we} claim that, for $\mu \in [\alpha_*, |u(c_i)|)$,
\bl{var2}  \varphi(\overline r_{i\mu})\ge \varphi(r_{i\mu})
\quad \Rightarrow \quad \varphi_2(\overline r_{i\mu})> \varphi_2(r_{i\mu}), \quad \varphi_2(r):=\frac {Q_2(r)}{r^{n-1}{|u'^3(r)}|}.\ee
To prove this, assume first that $r_{i\mu} \le \tau_i$, {\clb where $\tau_i\in (c_{i-1}, r_i)$ is the unique zero of $v$ on $[c_{i-1}, c_i]$ assured by Lemma \ref{b14}.} Then
$\dsp Q_2(r_{i\mu})=Q(r_{i\mu})+2r_{i\mu}^{n-1}u'v\le Q(r_{i\mu})$, but $\dsp Q_2(\overline r_{i\mu})> Q(\overline r_{i\mu})$ because $\overline r_{i\mu}>\tau_i$. By using the basic property $|u'(\overline r_{i\mu})|<|u'(r_{i\mu})|$ from Prop.\,\ref{basic1}\,(ii), together with $\varphi(\overline r_{i\mu})\ge \varphi(r_{i\mu})$, we derive
\ben \varphi_2(\overline r_{i\mu})&=&\frac {Q_2(\overline r_{i\mu})}{\overline r_{i\mu}^{n-1}{|u'^3(\overline r_{i\mu})}|}> \frac {Q(\overline r_{i\mu})}{\overline r_{i\mu}^{n-1}{|u'^3(\overline r_{i\mu})}|}=\frac{\varphi(\overline r_{i\mu})}{u'^2(\overline r_{i\mu})}
\\&>&\frac{\varphi(r_{i\mu})}{u'^2(r_{i\mu})}
= \frac {Q(r_{i\mu})}{r_{i\mu}^{n-1}{|u'^3(r_{i\mu})}|}
\ge \frac {Q_2(r_{i\mu})}{r_{i\mu}^{n-1}{|u'^3(r_{i\mu})}|}=\varphi_2(r_{i\mu})\een
and verify \equ{var2}. If $r_{i\mu}> \tau_i$, then $v/u'>0$ in $[r_{i\mu}, c_i)$. {\clb Since
$Q_1(r)>0$ in $(c_{i-1}, c_i)$,
\equ{dvup} yields  $(v/u')'=Q_1(r)/(r^nu'^2)>0$,} which implies that, in particular,
\ben \frac {v(\overline r_{i\mu})}{u'(\overline r_{i\mu})}>\frac {v(r_{i\mu})}{u'(r_{i\mu})}\quad{\rm and}\quad \frac {|v(\overline r_{i\mu})|}{u'^2(\overline r_{i\mu})}>\frac {|v(r_{i\mu})|}{u'^2(r_{i\mu})}.\een
{\clb Therefore, \equ{var2} is confirmed again through the following calculation:}
\ben \varphi_2(\overline r_{i\mu})&=&\frac {Q_2(\overline r_{i\mu})}{\overline r_{i\mu}^{n-1}{|u'^3(\overline r_{i\mu})}|}= \frac {Q(\overline r_{i\mu})}{\overline r_{i\mu}^{n-1}{|u'^3(\overline r_{i\mu})}|}+\frac{2|v(\overline r_{i\mu})|}{{u'^2(\overline r_{i\mu})}}\\
 &>& \frac {Q(r_{i\mu})}{r_{iu}^{n-1}{|u'^3(r_{i\mu})}|}+\frac{2|v(r_{
 i\mu})|}{{u'^2(r_{i\mu})}}=\varphi_2(r_{i\mu}).\een

{\clb We use the identity $\left[Q(r)/(r^{n-1}u')\right]'=f(u)Q_2(r)/(r^{n-1}u'^2)$ in \equ{varp} to complete the proof of \equ{var}. If $i$ is odd,} then $u,\, u',\, f(u)<0$ in $(\overline b_i, c_i)$.
By applying \equ{varp} and noticing that $f(-u)=-f(u)$,
we find that, for each $\mu \in [\alpha_*, |u(c_i)|)$,
\bl{varpo} \varphi(\overline r_{i\mu})-\varphi(\overline b_i)&=& -\int_{\overline b_i}^{\overline r_{i\mu}}f(u)\cdot\frac {Q_2(r)}{r^{n-1}{u'^2(r)}}\, dr
= -\int_{-\alpha_*}^{-\mu}f(u)\cdot\frac {Q_2(\overline r_{i(-u)})}{\overline r_{i(-u)}^{n-1}{u'^3(\overline r_{i(-u)})}}\, du  \nonumber \\
&=& -\int_{\alpha_*}^{\mu}f(-u)\cdot\frac {Q_2(\overline r_{iu})}{\overline r_{iu}^{n-1}|u'^3(\overline r_{iu})|}\, du=\int_{\alpha_*}^{\mu}f(u)\cdot \varphi_2(\overline r_{iu})\, du. \ee
If $i$ is even, then $u,\, u',\, f(u)>0$ in $(\overline b_i, c_i)$. By repeating the calculation above, it is simpler to see that $\varphi(\overline r_{i\mu})-\varphi(\overline b_i)$ can also be expressed by the last integral in \equ{varpo}. Over the interval $(c_{i-1}, b_i)$, we have $u,\, f(u)>0$ and $u'<0$ for $i$ odd, and $u,\, f(u)<0$ and $u'>0$ for $i$ even.
By using a calculation similar to \equ{varpo}, we derive
\ben \varphi(r_{i\mu})-\varphi(b_i)=\int_{\alpha_*}^{\mu}f(u)\varphi_2(r_{iu})\, du \een
By subtraction, we conclude {\clb that}
\bl{vava} \varphi(\overline r_{i\mu})-\varphi(r_{i\mu})=\varphi(\overline b_i)-\varphi(b_i)+\int_{\alpha_*}^{\mu}f(u)\left[\varphi_2(\overline r_{iu})-\varphi_2(r_{iu})\right]\, du.\ee
Since $\varphi(\overline b_i)>\varphi(b_i)$, we find that $\varphi(\overline r_{i\mu})>\varphi(r_{i\mu})$ by continuity when $\mu$ is only slightly larger than $\alpha_*$. {\clb Since $f(u)>0$} for $u>\alpha_*$, it is clear from \equ{var2} and \equ{vava} that \equ{var} can be extended to all $\mu \in [\alpha_*, |u(c_i)|)$. \end{proof}

\subsection{{\clb Positivity of $T_2(c_1)$ and conditional positivity of
$T_2(c_i)$ in later phases}}

\begin{lemma}\label{t2c1} For a given $1<i\le k$, if \equ{qmcq1} and \equ{qrna} hold and
\bl{qrnb} T_2(r)>0\quad {\rm on}\quad [c_{i-1}, b_i]; \qquad Q_2(r)>0 \quad {\rm on}\quad [c_{i-1}, c_i],\quad i>1,  \ee
then $T_2(r)>0$ on $[\overline b_i, c_i]$. In particular, $T_2(r)>0$ on $[\overline b_1, c_1]$.
\end{lemma}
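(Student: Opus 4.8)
\emph{The plan.} The plan is to show that $T_2$ is positive at the point of $[\overline b_i,c_i]$ where it is smallest. So let $\overline t_i\in[\overline b_i,c_i]$ be a minimizer of $T_2$ over $[\overline b_i,c_i]$; it then suffices to prove $T_2(\overline t_i)>0$. If $\overline t_i=\overline b_i$, then $|u(\overline b_i)|=\alpha_*$, so $F(u(\overline b_i))=0$, hence $g_2(u(\overline b_i))=0$ by \equ{g2} and $T_2(\overline b_i)=Q(\overline b_i)$, which is positive by \equ{qrna} when $i>1$ and by \equ{qb1z1} in Lemma \ref{phasel1} when $i=1$. So I would assume from now on that $\overline t_i\in(\overline b_i,c_i]$, whence $|u(\overline t_i)|>\alpha_*$.

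\emph{Reflecting across $z_i$ and building the bridge.} Since $|u|$ is strictly decreasing on $(c_{i-1},z_i)$ from $|u(c_{i-1})|$ to $0$, and $\alpha_*<|u(\overline t_i)|\le|u(c_i)|<|u(c_{i-1})|$ by Prop.\,\ref{basic11}\,(iii), there is a unique $t_i\in(c_{i-1},b_i)$ with $|u(t_i)|=|u(\overline t_i)|$. One has $T_2(t_i)>0$: by \equ{qrnb} when $i>1$, and when $i=1$ because $T_2=T_1+|u|^{1-p}M>0$ on $(0,z_1)\supset(0,b_1]$ by \equ{qmt1++} and the positivity of $M$ on $(0,c_1]$ from Lemma \ref{phasel1}. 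Put $a_i:=g_2(u(\overline t_i))=g_2(u(t_i))$, which is $\ge0$ since $|u|\ge\alpha_*$ at $t_i$ and $\overline t_i$. As in \equ{fa1}, with $a=a_i$ the function \equ{fau} reduces to $F_{a_i}(u)=-\tfrac12H_i(u)$, where $H_i(u):=u^2\bigl(1-|u/u(t_i)|^{p-1}\bigr)$; thus $F_{a_i}$ vanishes precisely where $|u|\in\{0,|u(t_i)|\}$, i.e. at $t_i$, $z_i$ and $\overline t_i$. By \equ{t2} and \equ{ba}, $B_{a_i}$ therefore coincides with $W_{a_i}$, and hence with $T_2$, at $t_i$ and at $\overline t_i$. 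Integrating \equ{bap} between these two points, exactly as in the derivation of \equ{t2t1}, gives
\[
T_2(\overline t_i)=T_2(t_i)+\int_{t_i}^{\overline t_i}H_i(u(r))\,\varphi_n(r)\,dr,\qquad \varphi_n(r)=\frac{Q_n(r)}{r\,u'^2(r)}.
\]
This integral converges even when $\overline t_i=c_i$: there $|u(\overline t_i)|=|u(c_i)|$ forces $H_i(u(r))=O\bigl((r-c_i)^2\bigr)$, which absorbs the $u'^{-2}=O\bigl((r-c_i)^{-2}\bigr)$ singularity of $\varphi_n$. Since moreover $|u(r)|\le|u(t_i)|$ on $[t_i,\overline t_i]$, with equality only at $t_i,z_i,\overline t_i$, we have $H_i(u(r))\ge0$ there, and everything reduces to showing
\[
\int_{t_i}^{\overline t_i}H_i(u(r))\,\varphi_n(r)\,dr\ \ge\ 0.
\]

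\emph{Sign of the bridge integral; the main obstacle.} By Lemma \ref{b11}, $v$ changes sign in Phase $i$ exactly once, at $\tau_i\in(c_{i-1},r_i)$, so by \equ{cip} the sign of $u'v$ reverses at $\tau_i$ and $u'v>0$ on $(\tau_i,c_i)$; hence $Q_n=Q_1+(n-1)r^{n-1}u'v\ge Q_1>0$ on $(\tau_i,c_i)$ by \equ{qmcq1}, so $\varphi_n>0$ on $(\tau_i,c_i)\supseteq[\overline b_i,c_i]$. Consequently the part of the integral over $[\max\{\tau_i,b_i\},\overline t_i]$ is nonnegative, indeed strictly positive since $H_i>0$ on $(b_i,\overline b_i)\setminus\{z_i\}$. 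What remains is to dominate the possibly negative contribution of $[t_i,\max\{\tau_i,b_i\}]$: by \equ{qn}, $Q_n<0$ just above $c_{i-1}$, and its first zero $s_i$ defined in \equ{db1} may lie to the right of $t_i$. For $n\ge4$, or $n=3$ and $p\in[2,5)$, Lemma \ref{b14} confines $s_i$ to the left of $b_i$; one then pairs, for each level $\mu<|u(t_i)|$, the two preimages $r_-(\mu)\in[t_i,z_i)$ and $r_+(\mu)\in(z_i,\overline t_i]$ of $|u|=\mu$, and invokes the comparison Lemma \ref{qrn} together with $|u'(r_+)|<|u'(r_-)|$ (Prop.\,\ref{basic1}\,(ii)) to see that each paired contribution is nonnegative. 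For the residual case $n=3$, $p\in(1,2)$, where $s_i$ may exceed $b_i$, the needed inequality on $[b_i,\tau_i]$ is precisely Lemma \ref{b13} with $\widetilde u=|u(t_i)|$, since there $I_i=\int_{b_i}^{\tau_i}H_i(u)\varphi_3\,dr$; together with $Q'>0$ on $(b_i,\tau_i)$ (by \equ{qp}) and $Q_3\ge Q_1>0$ on $(\tau_i,c_i)$ this closes the argument. The case $i=1$ is handled identically, its structural inputs \equ{qrna} and \equ{qrnb} being furnished by Lemma \ref{phasel1} and \equ{qmt1++}. The essential difficulty will be exactly this last step: $T_2$ (through $g_2$) and $Q_n$ both degenerate across the zero $z_i$ of $u$ — the ``river'' — so the positivity of $T_2$ on the first semi-phase cannot be transported directly, but must be ferried across $z_i$ by $B_{a_i}$, whose behaviour is dictated by the signs of $F_{a_i}(u)=-\tfrac12H_i(u)$ and of $\varphi_n$; since $\varphi_n$ may be negative near $c_{i-1}$, making the gain over $(\tau_i,\overline t_i)$ outweigh the loss over $[t_i,s_i)$ is what forces the sharp level-by-level estimates of Lemmas \ref{b13} and \ref{qrn}.
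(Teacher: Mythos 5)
Your overall architecture --- reduce to $T_2(\overline t_i)>0$ at a single point, reflect that point to $t_i$ across $z_i$, ferry the positivity over the ``river'' with the bridge function $B_{a_i}$ via \equ{bap}, and split on whether $s_i$ lies left or right of $b_i$, invoking Lemmas \ref{b14}, \ref{b13} and \ref{qrn} --- is exactly the paper's. But there is a genuine gap at the step you yourself flag as the essential difficulty. You assert that the paired contribution at each level $\mu$ is nonnegative ``by the comparison Lemma \ref{qrn} together with $|u'(r_+)|<|u'(r_-)|$'', and this is not enough. After the change of variables the paired quantity is $\Psi_i(\mu)=\varphi_n(\overline r_{i\mu})/|u'(\overline r_{i\mu})|+\varphi_n(r_{i\mu})/|u'(r_{i\mu})|$ with $\varphi_n=Q_n/(ru'^2)$, whereas Lemma \ref{qrn} compares $Q/(r^{n-1}|u'|)$ at the two points. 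Passing from one to the other requires the full chain \equ{up1}--\equ{vqp} together with \equ{sc31}: the bounds $-Q_n\le (n-2)r^{n-1}|u'v|$ on $(t_i,s_i)$ and $Q_n>nr^{n-1}|u'v|$ on $(\overline b_i,\overline t_i)$, the two-sided estimate \equ{vqp} of $|v|$ against $Q|u|/P$ coming from the connection identity \equ{conn}, and the monotonicity of $P(r)/r^n$ from Prop.\,\ref{basic2}\,(iii) to absorb the mismatch between the factors $n$ and $n-2$. None of this appears in your write-up, and Prop.\,\ref{basic1}\,(ii) alone does not bridge the difference.

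Moreover, your choice of $\overline t_i$ as a bare minimizer of $T_2$ over $[\overline b_i,c_i]$ actually blocks one of those missing estimates. The second half of \equ{vqp}, namely $|v|>\frac{Q}{P}\,|u|$ on $(\overline b_i,\overline t_i)$, is obtained by combining \equ{t2p2} with \equ{conn}, and this requires knowing that $T_2'<0$ throughout $(\overline b_i,\overline t_i)$, so that $M(r)>\va(r)$ there. The paper secures this by first proving, via the identity \equ{t22} and \equ{fpu}, that every interior critical point of $T_2$ in $(\overline b_i,c_i)$ satisfies $T_2''>0$, hence is a strict minimum and is unique, which yields the monotone profile \equ{ot1}. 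A global minimizer alone gives no sign information on $T_2'$ away from the minimizer, so you must supply this critical-point analysis (or an alternative route to $M>\va$ on $(\overline b_i,\overline t_i)$) before the level-by-level pairing can be closed. A small additional remark: your worry about convergence of the bridge integral when $\overline t_i=c_i$ is moot, since \equ{t2p} gives $T_2'(c_i)>0$, so $c_i$ is never the minimizer.
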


\begin{proof} {\clb By Lemma \ref{rcs} and the expression of $T_2$ in \equ{t2},} we see that
$T_2(r)>T_1(r)>0$ for all $r\in (0, z_1)$. By combining this observation with Lemma \ref{phasel1} and Lemma \ref{qrn}, {\clb we conclude that all assumptions for $i>1$ in this lemma} remain valid for $i=1$, except that $M$, $Q$, $Q_1$, $Q_2$, and $T_2$ {\clb all vanish at $c_0=0$, whereas they are assumed to be positive at $c_{i-1}$ when $i>1$}. {\clb We present a unified approach to establish the conditional positivity of $T_2(r)$ on $[\overline b_i, c_i]$ for $1<i\le k$, and its unconditional positivity on $[\overline b_1, c_1]$.  As we proceed, it will become clear that the minor discrepancies at $c_{i-1}>0$ for $i>1$ and at $c_0=0$ do not affect our unified approach.}

{\clb Lemma \ref{b14} shows that $v(r)$ has a unique zero $\tau_i\in (c_{i-1}, r_i)$ in Phase $i$.
Consequently, $uv>0$ in $(z_i, c_i]$. Using \equ{t2p}, we then obtain $T_2'(c_i)>0$.}
Since $F(u(\overline b_i))=0$, we have $T_2(\overline b_i)=Q(\overline b_i)>0$. If $T_2$ has no critical points in $(\overline b_i, c_i)$, then $T_2'(r)>0$ in $(\overline b_i, c_i)$ and this lemma follows from $T_2(\overline b_i)>0$ at once.

It remains to deal with the {\clb difficult} case that $T_2$ has critical points in $(\overline b_i, c_i)$. {\clb We first show that $T_2$ can have at most one critical point in this interval. To prove this, we make use of the following identity established in \equ{t22A}:}
\bl{t22} T_2(r)=B_0(r)+\frac {2F(u)}{(p-1)uu'}\cdot T_2'(r).\ee
{\clb Suppose that $\overline t_i$ is an arbitrary} critical point of $T_2$ in
$(\overline b_i, c_i)$. {\clb By differentiating \equ{t22} at $r=\overline t_i$ and using  $B'_0(r)=-2F(u)\cdot \varphi_n(r)$, we obtain}
\bl{fpu} \frac {2F(u)}{(p-1)uu'}\cdot T_2''(\overline t_i)=2F(u)\cdot \varphi_n(\overline t_i)
\quad \Rightarrow \quad T_2''(\overline t_i)=(p-1)uu'\cdot \varphi_n(\overline t_i). \ee
{\clb Since $u'v>0$ in $(\overline b_i, c_i)$, we have $Q_n(\overline t_i)>Q_1(\overline t_i)>0$
and $\varphi_n(\overline t_i)=Q_n(\overline t_i)/(\overline t_i u'^2)>0$. This, together with
$uu'>0$ in $(\overline b_i, c_i)$, implies that $\dsp T_2''(\overline t_i)>0$.} Thus $T_2$ minimizes at $\overline t_i$, and $\overline t_i$ is
its only critical point in $(\overline b_i, c_i)$.

It suffices to prove $T_2(\overline t_i)>0$ under the condition that
 \bl{ot1} T_2'(r)<0\,\,\,{\rm in}\,\,\, (\overline b_i, \overline t_i), \,\,\, T_2'(\overline t_i)=0, \,\,\,{\rm and}\,\,\,
T_2'(r)>0\,\,\, {\rm in}\,\,\, (\overline t_i, c_i).\ee
Since $|u(c_{i-1})|>|u(c_i)|$ and $u$ is monotone in Phase $i$, there is a unique $t_i\in (c_{i-1}, b_i)$ such that $u(t_i)=-u(\overline t_i)$. {\clb As $T_2(t_i)>0$, our goal is achieved by proving $T_2(\overline t_i)>T_2(t_i)$. Because $T_2(r)$ is undefined at $z_i$, we turn once more to the bridging function $B_a(r)$ for assistance.}
Corresponding to $g_2(u)$ defined in \equ{g2}, let
\bl{a1} a_i=g_2(u(t_i))=g_2(u(\overline t_i)){\clb =\frac{2F(u( t_i))}{h_1(u( t_i))}}
=\frac {p+1}{p-1}\cdot \frac {2F(u( t_i))}{|u(t_i)|^{p+1}}>0.\ee
{\clb By the definition $F_a(u)=F(u)- a h_1(u)/2$, we find easily that
\ben F_{a_i}(u(t_i))= F(u(t_i))- a_i h_1(u(t_i))/2=0  \een
and $F_{a_i}(u(\overline t_i))=0$ as well. Consequently, by the definitions \equ{ba} and \equ{g2},
\bl{wt2f} T_2( t_i)=B_{a_i}(t_i)=Q(t_i)-a_iM(t_i),\ee
and the same holds after replacing $t_i$ by $\overline t_i$.
For our model case, we have
\bl{fau} F_a(u):=-\frac{u^2}2+\frac 1{p+1}\left({1-\frac{p-1}2\, a}\right)|u|^{p+1}.\ee
Substituting $a=a_i$ into this exact form yields
\bl{fai} H_i(u):=-2F_{a_i}(u)={u^2}\left(1-\left|\frac {u}{u(t_i)}\right|^{p-1}\right). \ee}
Integrating \equ{bap} over
$(t_i, \overline t_i)$ and applying \equ{wt2f} and \equ{fai}, we obtain
\bl{t2t12} T_2(\overline t_i)=T_2(t_i)+\int_{t_i}^{\overline t_i} H_i(u(r))\cdot \varphi_n(r) \, dr>\int_{t_i}^{\overline t_i} H_i(u(r))\cdot \varphi_n(r) \, dr. \ee

{\clb It remains to establish the positivity of the integral in \equ{t2t12}.}
If $t_i\ge s_i$, then $Q_n, \,\varphi_n>0$ in $(t_i, \overline t_i)$ {\clb by
\equ{db1}, and the positivity of the integral follows} at once. {\clb Since
$s_i<\tau_i$ always holds} and \equ{ot1} specifies $\overline t_i>\overline b_i$, or equivalently $t_i<b_i$, {\clb our subsequent considerations are restricted to the case}
\bl{condtsb} t_i<s_i<\tau_i, \quad t_i<b_i. \ee
Denote by $r_{i\mu}\in (t_i, b_i]$ and $\overline r_{i\mu}\in [\overline b_i, \overline t_i)$ the numbers satisfying $|u( r_{i\mu})|=| u(\overline r_{i\mu})|=\mu$ for each $\mu \in [\alpha_*, |u(t_i)|)$. Recall from Prop.\,\ref{basic2}\,(iii) that $P(r)/r^{n}>0$ is universally decreasing. Applying Lemma \ref{qrn} leads to, {\clb for each $\mu\in (\alpha_*, |u(t_i)|)$,}
 \bl{sc31} && \frac{n \overline r^{n-2}_{i\mu}Q(\overline r_{i\mu})}{P(\overline r_{i\mu})u'^2(\overline r_{i\mu})}
=\frac{n\overline r_{i\mu}^{n-3}\varphi(\overline r_{i\mu})}{|u'(\overline r_{i\mu})|}\cdot \frac {\overline r_{i\mu}^{n}}{P(\overline r_{i\mu})} \nonumber\\
&>&\frac{(n-2)r_{i\mu}^{n-3}\varphi(r_{i\mu})}{|u'(r_{i\mu})|}\cdot \frac {r_{i\mu}^{n}}{P(r_{i\mu})} =\frac{(n-2)r_{i\mu}^{n-2}Q(r_{i\mu})}{P(r_{i\mu})u'^2(r_{i\mu})}. \ee
As $Q_2(r)>0$ in $(c_{i-1}, c_i]$ by \equ{qrnb},
we find that
\bl{up1}  -Q_n(r)=-Q_2(r)-(n-2)r^{n-1}u'v\le (n-2)r^{n-1}|u'v|,\quad r\in (t_i, s_i).\ee
As $Q(r)>0$ on $[\overline b_i, c_i]$ by \equ{qrna} and $u'v>0$ in $(\tau_i, c_i)$, {\clb we have}
\bl{up2} Q_n(r)=Q(r)+nr^{n-1}u'v>nr^{n-1}u'v=nr^{n-1}|u'v|,\quad r\in (\overline b_i, \overline t_i).\ee
{\clb In addition, we use the connection identity \equ{conn} to show that}
\bl{vqp} |v|<\frac {Q(r)}{P(r)}\cdot |u|\quad{\rm for}\quad r\in (t_i, \tau_i), \qquad
|v|>\frac {Q(r)}{P(r)}\cdot |u|\quad{\rm for}\quad r\in (\overline b_i, \overline t_i). \ee
Indeed, {\clb since $u\cdot v,\, \omega,\, M>0$ and $\va<0$ in $(c_{i-1}, \tau_i)$, \equ{conn} implies $Q(r)>P(r)v/u$ in $(c_{i-1}, \tau_i)$ and confirms the first part of \equ{vqp}.
Over the interval $(\overline b_i, \overline t_i)$, we have} $uv, \, uu'>0$ and $\omega<0$, {\clb and $T_2'(r)<0$ by \equ{ot1}; hence \equ{t2p2} implies $M(r)>\va(r)$,} and \equ{conn} implies $Q(r)<P(r)v/u$, by which the second part of \equ{vqp} is confirmed.

{\clb In the rest}, we consider the cases $s_i\le b_i$ and $s_i>b_i$ separately. {\clb Define $\overline s_i\in (z_i, c_i)$ by} $u(\overline s_i)=-u(s_i)$. Then we have either $\overline s_i\ge \overline b_i$ or $\overline s_i< \overline b_i$ in the two cases. If $s_i\le b_i$, then \equ{condtsb} implies that $t_i<s_i\le b_i$ and $\overline b_i\le \overline s_i<\overline t_i$. For each $\mu \in (|u(s_i)|, |u(t_i)|)$, we apply the estimates in \equ{sc31}--\equ{vqp} to derive
\ben && \frac{\varphi_n(\overline r_{i\mu})}{|u'(\overline r_{i\mu})|}=\frac{Q_n(\overline r_{i\mu})}{\overline r_{i\mu}|u'(\overline r_{i\mu})|^3}
> \frac{n \overline r^{n-2}_{i\mu}|v(\overline r_{i\mu})|}{u'^2(\overline r_{i\mu})} \qquad {\rm by} \,\, \equ{dvarn},\,\equ{up2} \nonumber \\
&>&\frac{n \mu \overline r^{n-2}_{i\mu}Q(\overline r_{i\mu})}{P(\overline r_{i\mu})u'^2(\overline r_{i\mu})}
>\frac{(n-2)\mu r_{i\mu}^{n-2}Q(r_{i\mu})}{P(r_{i\mu})u'^2(r_{i\mu})} \,\qquad \quad{\rm by} \,\, \equ{sc31},\,\equ{vqp}   \nonumber \\
&>& \frac{(n-2)r_{i\mu}^{n-2}|v(r_{i\mu})|}{u'^2(r_{i\mu})}
>-\frac{\varphi_n(r_{i\mu})}{|u'(r_{i\mu})|} \,\,\qquad \qquad \quad  {\rm by} \,\, \equ{up1},\,\equ{vqp}. \nonumber \een
Note carefully that \equ{sc31}--\equ{vqp} are applicable in these steps because $\mu \in (|u(s_i)|, |u(t_i)|)$ implies that $r_{i\mu}\in (t_i, s_i)$ and $\overline r_{i\mu}\in (\overline s_i, \overline t_i)$, whereas
$\dsp (t_i, s_i)\subseteq (t_i, b_i)$, $\dsp (t_i, s_i)\subset(t_i, \tau_i)$, and $\dsp (\overline s_i, \overline t_i)\subseteq (\overline b_i, \overline t_i)$. {\clb Therefore,
 for each $\mu \in (|u(s_i)|, |u(t_i)|)$, we obtain}
 \bl{Psi} \Psi_i(\mu):=\frac{\varphi_n(\overline r_{i\mu})}{|u'(\overline r_{i\mu})|}+\frac{\varphi_n(r_{i\mu})}{|u'(r_{i\mu})|}>0.\ee
{\clb If $i$ odd,} then $u>0,\, u'<0$ in $(t_i, s_i)$, and $u,\, u'<0$ in $(\overline s_i, \overline t_i)$. As $\varphi_n(r)>0$ in $(s_i, c_i]$, {\clb and $H_i(u)\ge 0$
for $u\in (-|u(t_i)|, |u(t_i)|)$,} we continue from \equ{t2t12} to {\clb estimate}
\bl{t2ovt1} T_2(\overline t_i)&>& \int_{t_i}^{s_i}H_i(u(r))\cdot \varphi_n(r) \, dr+\int_{\overline s_i}^{\overline t_i} H_i(u(r))\cdot \varphi_n(r) \, dr   \nonumber \\
&=& \int_{u(t_i)}^{u(s_i)}H_i(u)\cdot \frac{\varphi_n(r_{iu})}{u'(r_{iu})}\, du
+\int_{u(\overline s_i)}^{ u(\overline t_i)} H_i(u)\cdot \frac{\varphi_n(\overline r_{i(-u)})}{u'(\overline r_{i(-u)})}\, du \nonumber\\
&=& \int_{|u(s_i)|}^{|u(t_i)|}H_i(u)\cdot \frac{\varphi_n(r_{iu})}{|u'(r_{iu})|}\, du
+ \int_{|u(s_i)|}^{|u(t_i)|} H_i(u)\cdot \frac{\varphi_n(\overline r_{iu})}{|u'(\overline r_{iu})|}\, du \nonumber\\
&=& \int_{|u(s_i)|}^{|u(t_i)|} H_i(u) \cdot \Psi_i(u)\, du>0.\ee
If $i$ is even, then $u<0,\, u'>0$ in $(t_i, s_i)$, and $u,\, u'>0$ in $(\overline s_i, \overline t_i)$. By repeating the calculations in \equ{t2ovt1}, with the only minor modification of replacing $r_{iu}$ by $r_{i(-u)}$ and $\overline r_{i(-u)}$ by $\overline r_{iu}$ in the second line, {\clb we derive again that $T_2(\overline t_i)>0$.}

 {\clb There remains the case $s_i>b_i$. By Lemmas \ref {b14}, this} may occur only {\clb if} $n=3$ and $1<p<2$. {\clb Since $|u(s_i)|<\alpha_*$, while Lemma \ref{qrn} requires $\mu\ge \alpha_*$, we can claim  $\Psi_i(\mu)>0$ for $\mu\in [|u(b_i)|, |u(t_i)|]$, but not for $\mu \in (|u(s_i)|, |u(b_i)|)$.} Write $[t_i, \overline t_i]=[t_i, b_i]\cup (b_i, \tau_i) \cup [\tau_i, \overline b_i)\cup [\overline b_i, \overline t_i]$. It is easy to see that $H_i(u(r)), \, \varphi_n(r)>0$ in $[\tau_i, \overline t_i]$ {\clb (except $H_i(u(z_i))=0$)}. According to Lemmas \ref {b13},
$H_i(u(r))\cdot \varphi_n(r)$ has a positive integral over $(b_i, \tau_i)$. As a result,
replacing $s_i$ by $b_i$ and $\overline s_i$ by $\overline b_i$ in the process leading to \equ{t2ovt1} yields
{\clb $T_2(\overline t_i)>\int_{|u(b_i)|}^{|u(t_i)|}\, H_i(u)\cdot \Psi_i(u)\, du>0$, which
completes our proof.} \end{proof}

\subsection{Proof of the phase transition lemma}
In this {\clb subsection}, we complete the proof {\clb Lemma \ref{ptl}.
In view of Lemmas \ref{phasel1} and \ref{t2c1}, we see that $Q(c_{1}), M(c_{1}), T_2(c_{1})>0$. We need to confirm the statements (i), (ii), and (iii) under the assumption that
\bl{qmt2i1} Q(c_{i-1}), \, M(c_{i-1}),\, T_2(c_{i-1})>0\,\,{\rm for\,\, some}\,\,
i\in \{2, \cdots, k+1\}.
\ee
According to Lemma \ref{b11}, $v(r)$ changes sign in $(c_{i-1}, r_i)$. Let $\tau_i$ be its first zero in this interval. Then the signs of various quantities in $(c_{i-1}, \tau_i)$ are fixed in \equ{cip}. In particular, $uv,\, f(u)v>0$ in $(c_{i-1}, \tau_i)$. Thus $Q'(r), M'(r)>0$ and}
\bl{c42} Q(r)>Q(c_{i-1})>0,\quad M(r)>M(c_{i-1})>0, \quad r\in (c_{i-1}, \tau_i].\ee
We continue our proof in {\clb five} steps.

{\it Step 1}. As the first major step, we use the connection identity to prove
\bl{tmono} T_2'(r),\, T_2(r)>0, \quad r\in [c_{i-1}, \max\{\tau_i, b_i\}], \quad {\clb i\in \{2, \cdots, k+1\}.}\ee
{\clb When $i=k+1$ and $u$ is a bound state, $b_{k+1}$ is uniquely determined by $b_{k+1}>c_k$ and $|u(b_{k+1})|=\al_*$.} Substituting $u'(c_{i-1})=0$ into \equ{t2p} gives {\clb $T_2'(c_{i-1})>0$.}
{\clb Since $uv\ge 0$, $M>0$, and $uu'<0$ in $(c_{i-1}, \tau_i]$, applying \equ{t2p} once again we conclude that $T_2'(r)>0$, and $T_2(r)\ge T_2(c_{i-1})>0$, for all $r\in [c_{i-1}, \tau_i]$.
This confirms \equ{tmono} when $\tau_i\ge b_i$.}

It is highly non-trivial to prove \equ{tmono} in the other case that $\tau_i< b_i$. The hardest part is to rule out the possibility that $v$ has zeros within $(\tau_i, b_i]$. Because $uv<0$ in a right neighborhood of $\tau_i$, $v$ will not have any zero in $(\tau_i, b_i]$ if we can prove that
\bl{stv} \tau_i<b_i \qquad \Rightarrow \qquad uv<0\quad{\rm in}\quad (\tau_i, b_i].\ee
Suppose for contradiction that \equ{stv} is not true. Then there exists $\widetilde \tau_i$
such that
\bl{vt4} \widetilde \tau_i \in (\tau_i, b_i], \qquad v(\tau_i)=v(\widetilde \tau_i)=0, \qquad uv<0\quad {\rm in}\quad (\tau_i, \widetilde \tau_i).\ee
By using the definitions of $Q(r)$, $M(r)$, $T_2(r)$, and $\omega(r)$, we derive the expression
\bl{wt2} T_2(\tau_i)&=& Q(\tau_i)-g_2(u(\tau_i))M(\tau_i)
= \tau_i^n u'v'+\tau_i^{n-1} g_2(u(\tau_i))uv'  \nonumber \\
&=& \tau_i^{n-1} u(\tau_i)v'(\tau_i)\left[g_2(u(\tau_i))-\omega(\tau_i)\right]
\ee
and the same for $T_2(\widetilde\tau_i)$ by replacing $\tau_i$ with $\widetilde\tau_i$. As {\clb $u(\tau_i)v'(\tau_i)<0$ and $T_2(\tau_i)>0$, \equ{wt2} implies}
$g_2(u(\tau_i))<\omega(\tau_i)$. {\clb Since $g_2(u(r))$ decreases in $(\tau_i, \widetilde\tau_i)$ by \equ{g2}, whereas $\omega(r)$ increases in this interval by Prop.\,\ref{basic2}\,(ii),} it follows that $g_2(u(\widetilde\tau_i))<\omega(\widetilde\tau_i)$.
{\clb As $v'$ assumes} opposite signs at $\tau_i$ and $\widetilde \tau_i$, we obtain
$u( \widetilde\tau_i)v'( \widetilde\tau_i)>0$. {\clb By replacing $\tau_i$ with $\widetilde \tau_i$ in
\equ{wt2}, we find} $\dsp T_2(\widetilde \tau_i)<0$. Since $T_2,\, T_2'>0$ on $[c_{i-1}, \tau_i]$, $T_2$ has a critical point within $(\tau_i, \widetilde\tau_i)$, and there exists $\tau_i^*$ such that
\bl{t2s} \tau_i^*\in (\tau_i, \widetilde\tau_i), \quad T_2(r),\, T_2'(r)>0 \,\, {\rm in}\,\, [c_{i-1}, \tau_i^*), \quad T_2'(\tau_i^*)=0.\ee
{\clb Since $u'(\tau_i^*)v(\tau_i^*)>0$, it follows from \equ{conn} that} $\va(\tau_i^*)>0$. Consequently, \equ{t2p2} gives $M(\tau_i^*)=\va(\tau_i^*)>0$. Furthermore, since $uv<0$ at $\tau_i^*$ and $P(r)$ is always positive, the connection identity \equ{conn} leads to the striking relation
\bl{qta4} Q(\tau_i^*)=P(\tau_i^*)\cdot {v(\tau_i^*)}/{u(\tau_i^*)}<0.\ee
As \equ{t2s} clearly implies that $T_2(\tau_i^*)>0$, we deduce from $M(\tau_i^*)>0$ and
\equ{g2} that
\bl{g2ta4} g_2(u(\tau_i^*))M(\tau_i^*)=Q(\tau_i^*)-T_2(\tau_i^*)<0 \quad \Rightarrow\quad g_2(u(\tau_i^*))<0.\ee
{\clb Thus $F(u(\tau_i^*))<0$ by the definition of $g_2$}. However,
{\clb $\tau_i^*\in (\tau_i, b_i)$ by our selection of $\tau_i^*$. It leads to
$F(u(\tau_i^*))>0$ and a contradiction. This confirms \equ{stv}.}

{\clb Now we prove \equ{tmono}. If it were not true, then there would be} a critical point of $T_2$,  again denoted by $\tau_i^*$, such that
\bl{t2si} \tau_i^*\in (\tau_i, b_i], \quad T_2,\, T_2'>0 \,\, {\rm in}\,\, [c_{i-1}, \tau_i^*), \quad T_2'(\tau_i^*)=0, \quad uv<0 \,\, {\rm in}\,\, (\tau_i, \tau_i^*].\ee
{\clb Since $u'(\tau_i^*)v(\tau_i^*)>0$, we find again that $\va(\tau_i^*)>0$,
$M(\tau_i^*)=\va(\tau_i^*)>0$, and $Q(\tau_i^*)<0$. Thus, \equ{g2ta4} gives
$g_2(u(\tau_i^*))<0$, which in turn implies} $F(u(\tau_i^*))<0$. It contradicts $\tau_i^*\in (\tau_i, b_i]$ and completes {\clb our} proof of \equ{tmono}.

{\it Step 2}. {\clb Based on \equ{tmono} and \equ{stv}, we provide a straightforward proof of}
\bl{qq12} Q(r), \,\, Q_1(r),\, \, Q_2(r)>0, \quad r\in [c_{i-1}, \max\{b_i, \tau_i\}], \quad {\clb i\in \{2, \cdots, k+1\}}.\ee
At $r=c_{i-1}$, the three $Q$ family functions are {\clb equal and positive. Moreover,  they all increase} in $(c_{i-1}, \tau_i)$; {\clb thus \equ{qq12} is verified when $b_i\le \tau_i$.} If $b_i> \tau_i$, {\clb then \equ{stv} holds, which implies that} $f(u)v,\, Q'(r)<0$ and $u'v>0$ in $(\tau_i, b_i]$. By using  \equ{tmono} and the definitions of $g_2$ and $T_2$, we derive
\bl{qs4} Q(b_i)=T_2(b_i)+g_2(u(b_i))M(b_i)=T_2(b_i)>0.\ee
Over the interval $[c_{i-1}, b_i]$, $Q(r)$ increases from $Q(c_{i-1})>0$ to reach a local maximum at $\tau_i$ and then {\clb decreases} to $Q(b_i)>0$. As a result, we find that $Q_2>Q_1>Q>0$ in $(\tau_i, b_i]$ and complete the proof of \equ{qq12}.

{\it Step 3}. {\clb We make use of the bridging function $B_0(r)$ to prove that
\begin{equation}\label{qi+}
\begin{cases} Q_1>0\,\, {\rm on}\,\, [\tau_i, \overline b_i] \,\, {\rm if}\,\, i\in \{2, \cdots, k\};\\
\,  Q_1>0\,\, {\rm on}\,\, [\tau_{k+1}, z_{k+1}]\,\,{\rm if}\,\,i=k+1\,\,{\rm and}\,\, u{\rm \,\, is\,\, not\,\, a\,\, bound\,\, state;}\\
\, Q_1>0\,\, {\rm in}\,\, [\tau_{k+1}, \infty) \,\,{\rm if}\,\,i=k+1\,\,{\rm and}\,\, u{\rm \,\, is\,\, a\,\, bound\,\, state.}
\end{cases}
\end{equation}
Since $u'v'<0$ and $Q_1<0$ at the first possible zero of $v$ in $(\tau_i, \overline b_i]$ for
$i\in \{2, \cdots, k\}$, and in $(\tau_{k+1}, z_{k+1}]$ or $(\tau_{k+1}, \infty)$ for $i=k+1$,
\equ{qi+} implies that $v$ has no zeros in these intervals.} Suppose for contradiction that \equ{qi+} is false and let $\widetilde q_i$ be the first zero of $Q_1$ {\clb in each of the specified intervals.  Then
$\widetilde q_i>\max\{b_i, \tau_i\}$ by {\it Step 2} and}
\bl{tqi} u'v>0\,\,\,{\rm in}\,\,\, (\tau_i, \widetilde q_i], \quad
Q_n(r)\ge Q_1(r)>0\,\,\,{\rm in}\,\,\, [\tau_i, \widetilde q_i), \quad Q_1(\widetilde q_i)=0.\ee
If $b_i\le \tau_i$, {\clb then $F(u(r))\cdot Q_n(r)<0$ and $B'_0(r)>0$} in $(\tau_i, \widetilde q_i)$. {\clb Using \equ{qq12}, we obtain}
\bl{bqzi} B_0(\widetilde q_i)>B_0(\tau_i)=Q(\tau_i)>0.\ee
Similarly, if $b_i>\tau_i$, then {\clb $B'_0(r)>0$} in $(b_i, \widetilde q_i)$ and
{\clb \equ{qq12} gives}
\bl{bqzii} B_0(\widetilde q_i)>B_0(b_i)=Q(b_i)>0.\ee
However, {\clb by using \equ{tqi} and \equ{bobo}, we find
\ben B_0(\widetilde q_i)
=-\,\widetilde q_i^{n-1}u'v-2F(u)\cdot \frac{\widetilde q_i^{n-1}v}{u'}= -\frac{2\widetilde q_i^{n-1}v(\widetilde q_i)}{u'(\widetilde q_i)}\cdot E(\widetilde q_i)<0, \een}
which contradicts \equ{bqzi} and \equ{bqzii}. {\clb This confirms \equ{qi+}}.

{\it Step 4}. We continue from \equ{qi+} to verify{\footnote{
Define $r_{i\mu}\in (c_{i-1}, b_i]$ and $\overline r_{i\mu}\in [\overline b_i, c_i)$ by $|u( r_{i\mu})|=| u(\overline r_{i\mu})|=\mu$ as in Lemma \ref{qrn}. Then we may rewrite \equ{qb4p} as
$\dsp Q(\overline r_{i\alpha_*})>Q(r_{i\alpha_*})$. Define $\dsp \overline B_{\mu}(r):=Q(r)-2\left[F(u)-F(\mu)\right]\cdot {r^{n-1}v}/{u'}$.
Then $\dsp \overline B'_{\mu}(r)=-2\left[F(u)-F(\mu)\right]\cdot {Q_n(r)}/{ru'^2}$, from which we may deduce that, for each $\mu\in (0, 1]$, $\dsp Q(\overline r_{i\mu})=\overline B_{\mu}(\overline r_{i\mu})<\overline B_{\mu}(r_{i\mu})=Q(r_{i\mu})$. {\clb This stands in sharp contrast to \equ{qb4p} and highlights the subtle nature of $Q(r)$ within the ``river" $(b_i, \overline b_i)$.} }}
\bl{qb4p} Q(\overline b_i)>Q(b_i)>0,\quad {\clb i\in \{2, \cdots, k\}}.\ee
{\clb From {\it Step 3} we see that $u'v>0$ in $(\tau_i, \overline b_i]$. If $b_i\ge \tau_i$, then $Q_n(r)>Q_1(r)>0$ and $B'_0(r)>0$ for $r\in (b_i, \overline b_i]$. As a result, $Q(\overline b_i)=B_0(\overline b_i)>B_0(b_i)=Q(b_i)$.
If $b_i<\tau_i$, then $B'_0(r)>0$ in $(\tau_i, \overline b_i)$ and we find similarly that $Q(\overline b_i)>Q(\tau_i)$. In addition, since $Q'(r)>0$ in $(b_i, \tau_i)$, it follows that $Q(\overline b_i)>Q(\tau_i)>Q(b_i)$. This, together with \equ{qq12}, confirms \equ{qb4p} completely.}

{\it Step 5. Completion.}

{\clb (i) Assume that $i\in \{2,  \cdots, k\}$ and $Q,\, M,\, T_2>0$ at $c_{i-1}$. Then
$v(r)$ takes its first zero on $[c_{i-1}, c_i]$ at $\tau_i\in (c_{i-1}, r_i)$. As shown in {\it Step 3}, $v$ has no zeros in $(\tau_i, \overline b_i]$.} We show that $v$ has no zeros on $[\overline b_i, c_i]$ {\clb as well}. In fact, if this were false, then there would be $\widetilde \tau_i\in (\overline b_i, c_i]$ such that $u'v>0$ in $(\tau_i, \widetilde \tau_i)$ and $v(\widetilde\tau_i)=0$. As $u'v'>0$ at $\tau_i$, we would have $u'v'<0$ at $\widetilde\tau_i$ and  $Q(\widetilde\tau_i)<0$. However, since {\clb $f(u)v,\, Q'(r)>0$ on $[\overline b_i, \widetilde\tau_i]$, and $Q(\overline b_i)>0$ by {\it Step 4}, we derive} $Q(\widetilde \tau_i)>Q(\overline b_i)>0$ {\clb and a contradiction. As a result,
$v$ has a unique zero $\tau_i$ in $[c_{i-1}, c_i]$ with $\tau_i\in (c_{i-1}, r_i)$.}

{\clb It remains to verify $Q,\, M,\, T_2>0$ at $c_{i}$. Clearly, $f(u)v,\, Q'(r)>0$ on $[\overline b_i, c_i]$, and}
\bl{qupv} u'v>0 \,\,\,{\rm in}\,\,\, (\tau_i, c_i], \quad Q(r)>0 \,\,\,{\rm on}\,\,\, [\overline b_i, c_i]. \ee
{\clb According to {\it Steps 2} and {\it 3}, $Q_1>0$ on $[c_{i-1}, \overline b_i]$.
In addition, $Q_1>Q>0$ on $[\overline b_i, c_i]$ by \equ{qupv}. Hence $Q_1>0$ on
$[c_{i-1}, c_i]$. Note that {\it Step 2} also gives
$Q_2>0$ on $[c_{i-1}, \tau_i]$. Since $u'v>0$ in $(\tau_i, c_i]$, we find
$Q_2>Q_1>0$ in $(\tau_i, c_i]$ and so $Q_2>0$ on $[c_{i-1}, c_i]$. As} $M$ increases in $(c_{i-1}, \tau_i)$ from $M(c_{i-1})>0$, decreases in $(\tau_i, z_i)$ until reaching a local minimum value
{\clb $M(z_i)=z_i^{n-1}u'v>0$,}
and then increases in $(z_i, c_i)$, {\clb we obtain $M>0$ on $[c_{i-1}, c_i]$. Combining these observations with {\it Steps 1-4}, we see that all the assumptions imposed in
Lemmas \ref{b14}-\ref{t2c1} are fully covered}. {\clb In particular,
Lemma \ref{t2c1} applies here, from which we obtain} $T_2(r)>0$ on $[\overline b_i, c_i]$.

 {\clb (ii) This is an immediate consequence of {\it Step 3}.

(iii) Assume that $u$ is a bound state and $Q(c_{k}), M(c_{k}), T_2(c_{k})>0$.
It again follows from {\it Step 3} that $v$ has a unique zero $\tau_{k+1}$ in $[c_k, \infty)$ with $\tau_{k+1}\in (c_{k}, r_{k+1})$.} It remains to show that {\clb $v$ is strictly monotone for $r$ sufficiently large, and} $\lim_{r\to\infty} |v(r)|=\infty$. {\clb In view of {\it Steps 2} and {\it 3}}, it is seen that $Q_1(r)>0$ for all $r\ge c_k$.

We {\clb claim} that $\lim_{r\tin}B_0(r)$ is positive (possibly $\infty$). {\clb Indeed,} if $\tau_{k+1}\ge b_{k+1}$, then
\bl{ed1} 0<|u(r)|< \al_*\,\, \Rightarrow \,\, F(u(r))< 0; \quad u'v>0\,\, \Rightarrow \,\,
Q_n(r)\ge Q_1(r)>0 \ee
for all $r\in (\tau_{k+1}, \infty)$.  Hence $B'_0(r)>0$ in $(\tau_{k+1}, \infty)$ and
\ben \lim_{r\tin}B_0(r)>B_0(\tau_{k+1})=Q(\tau_{k+1})>0.\een
If $\tau_{k+1}<b_{k+1}$, then \equ{ed1} holds for all $r>b_{k+1}$.
Thus $B'_0(r)>0$ in $(b_{k+1}, \infty)$ and
\ben \lim_{r\tin}B_0(r)>B_0(b_{k+1})=Q(b_{k+1})>0.\een
{\clb In either case, the claim is confirmed.}

Since $f'(u(r))<0$ and $v$ does not change sign for $r$ sufficiently large, we see from
\equ{rnvp} that $r^{n-1}v'$ is  eventually monotone, and so $v$ must be strictly monotone. Now we suppose for contradiction that $|v|\not\to \infty$ as $r\tin$. Then $|v|$ approaches a finite constant. Recall from Prop.\,\ref{basic11} (iv) that $u/u' \to -1$ and $|u|\downarrow 0$ exponentially as $r\tin$. It follows easily that $\lim_{r\tin}Q(r)=0$. From the expression
\ben B_0(r)=Q(r)-2F(u)\cdot \frac{r^{n-1}v}{u'} =Q(r)-2r^{n-1}\left(-\frac{|u|}2+\frac{|u|^p}{p+1}\right)\cdot \frac {|u|}{u'}\cdot v, \een
we find that $\lim_{r\tin} B_0(r)=0$. It gives a contradiction and completes the proof. \qed

\subsection{{\clb Verification of the conditions} of Lemma \ref{tail0}}

{\clb We verify conditions (a) and (b) of Lemma \ref{tail0}, while maintaining its basic assumptions that $u(r)=u(r, \al)$ is a solution of \equ{eu} and $\al>\al^*$.

(a) Assume that $u(r)=u(r, \al)$ has exactly $k\ge 1$ zeros $z_1<\cdots<z_k$. We need to verify that $v(r)$ has exactly $k$ zeros $\tau_1<\cdots<\tau_k$ on $[0, z_k]$, with $\tau_1\in (0, z_1)$, and $\tau_i\in (z_{i-1}, z_i)$, $2\le i\le k$; moreover, $u'v'>0$ whenever $v=0$.} When $k=1$, this is covered by Lemma \ref{rcs} (i). Next we consider $k=2$. {\clb By Lemma \ref{phasel1}, $v$ has a unique zero $\tau_1$ in the first phase $[0, c_1]$
and $\tau_1\in (0, r_1)$. By the phase transition lemma,
$Q,\, M,\, T_2>0$ at $c_1$, and $v$ has a unique zero $\tau_2$ on $[c_1, z_2]$ with $\tau_2\in (c_1, r_2)$. Since $r_1<z_1<c_1<r_2<z_2$, it follows that $\tau_2$ is the unique zero of $v$ on $[z_1, z_2]$. As $\tau_2$ is located within $(c_1, r_2)$, it is easily seen that $u'v'>0$ at $\tau_2$. By induction, the same reasoning based on the phase transition lemma extends to all $k\ge 3$.

(b) The condition concerning a ground state is validated by Lemma \ref{rcs} (ii). When
$u$ is a $1$-node bound state, the condition is an immediate consequence of part (iii) of Lemma \ref{ptl}. For $k\ge 2$, one applies an induction argument relying on part (i) of Lemma \ref{ptl} to establish $Q(c_{k}), M(c_{k}), T_2(c_{k})>0$. The rest of the condition then follows directly from part (iii) of Lemma \ref{ptl}.}  \qed

\appendix
\section{{\clb Auxiliary functions and identities}}

\subsection{{\clb Auxiliary functions associated with $f(u)$}} {\clb In Table 1, we list the definitions of several functions associated with $f(u)$, along with their explicit expressions for the model nonlinearity. These expressions are particularly useful in confirming that the conditions on $f$ specified in Sections 2 and 4 are indeed satisfied in the model case.}

\begin{table}[h]
\centering
 \begin{tabular}{|c|c|c|c|c|}
 \hline
Name & Definition & \footnotesize For $ f(u)=-u+u|u|^{p-1},\,\, 1<p<\frac{n+2}{n-2}, \,\, n\ge 3$       \\
\hline
$F(u)$& \footnotesize $\int_0^u f(s)\, ds$&\footnotesize $-\frac{u^2}2+\frac{|u|^{p+1}}{p+1}=\frac{u^2}{p+1}\left(|u|^{p-1}-\alpha_*^{p-1}\right)$ \\
\hline
$f'(u)$ &  & \footnotesize $\dsp -1+p|u|^{p-1}$ \\
\hline
$f''(u)$ &  & \footnotesize $\dsp p(p-1)u|u|^{p-3}$ \\
\hline
$h(u)$ &  \footnotesize $2nF(u)-(n-2)uf(u)$   & \footnotesize  $2u^2\left(|u/\al^*|^{p-1}-1\right)$\\
\hline
$h_1(u)$ & \footnotesize $uf(u)-2F(u)$   & \footnotesize $\frac{p-1}{p+1}|u|^{p+1}$  \\
\hline
$h_2(u)$ & \footnotesize $(n+2)f(u)-(n-2)uf'(u)$   &  \footnotesize $-4u+[n+2-p(n-2)]u|u|^{p-1}$\\
\hline
$g_1(u)$ & \footnotesize ${2f(u)}/[uf'(u)-f(u)]$   & \footnotesize $\frac 2 {p-1}\left(1-|u|^{1-p}\right)$ \\
\hline
$g_2(u)$ & \footnotesize $2F(u)/h_1(u)$   & \footnotesize $\frac 2{p-1}\left(1-\frac {p+1}2|u|^{1-p}\right)$ \\
\hline
$F_a(u)$ & \footnotesize $ F(u)-\frac a2 h_1(u)
$ & \footnotesize $ -\frac{u^2}2+\frac 1{p+1}\left({1-\frac{p-1}2\, a}\right)|u|^{p+1}$ \\
\hline
\end{tabular}
\ms
\caption{{\small {\it {\clb List of auxiliary functions associated with $f(u)$.}} }}  \label{tb0}\vspace*{-10pt} \end{table}

{\clb \subsection{Energy-type functions} We carry out routine computations of the derivatives of various functions and summarize the results in Table 2. By differentiating the energy function $E(r)$ defined in \equ{ef} and using \equ{ble3}, we deduce that
\bl{ep} E'(r)=u'u''+f(u)u'=u'[u''+f(u)]=-\,\frac{n-1}ru^{\prime 2}(r).\ee
Consequently, for $\widehat E(r)=r^{2(n-1)}E(r)$, we calculate
\bl{HA} \widehat E'(r)=2(n-1)r^{2n-3}E(r)-(n-1)r^{2n-3} u^{\prime 2}(r)=2(n-1)r^{2n-3}F(u). \ee

To proceed, we rewrite the equation in \equ{ble3} as
\bl{rnup} (r^{n-1}u')'=r^{n-1}\left(u''+\frac {n-1}r u'\right)=\, -r^{n-1}f(u).\ee
With the help of \equ{ep} and \equ{rnup}, we can verify the well-known Pohozaev identity:
\bl{Pr} P'(r)
&=& 2nr^{n-1}E(r)-2(n-1)r^{n-1}u^{\prime 2}+(n-2)r^{n-1}u'^2-(n-2)r^{n-1}uf(u) \nonumber \\
&=& r^{n-1}\left[2nF(u)-(n-2)uf(u)\right]\\
&=& r^{n-1} h(u).  \nonumber \ee
We may rewrite $P_2$ in the form
\ben P_2(r)=P(r)+r^n\left[\frac {n-2}n uf(u)-2F(u)\right]=P(r)-\frac{r^n}n\, h(u).\een
By applying \equ{Pr} and the relation $h_2(u)=h'(u)$, we calculate
\bl{mmyp2r} P_2'(r)=r^{n-1} h(u)-r^{n-1} h(u)-\frac{r^nu'}n\, h'(u) =-\,\frac{r^nu'}n\, h_2(u).\ee
For the important ratio
\bl{omega} \omega(r)=\, -\frac{ru'(r)}{u(r)},\ee
a direct differentiation yields
\bl{omegap}  \omega'(r)=\frac{u[(n-2)u'+rf(u)]+ru'^2}{u^2}=\frac {P_1(r)}{r^{n-1}u^{2}}.\ee
By using \equ{Pr}, we compute
\bl{prn} && \frac{d}{dr}\frac{P(r)}{r^n}= \frac{P'(r)}{r^n}-\frac{nP(r)}{r^{n+1}}=\frac{h(u)}{r}
-\frac{nP(r)}{r^{n+1}} \nonumber \\
&=& \frac 1r \left[2nF(u)-(n-2)uf(u)\right]-\frac nr\left[u^{\prime 2}+2F(u)\right]-\frac{n(n-2)uu'}{r^2} \nonumber \\
&=& -\frac{n}{r^{n+1}}\cdot \left\{r^n\left[u^{\prime 2}+\frac{n-2}nuf(u)\right]+(n-2) r^{n-1}uu'\right\} \nonumber \\
&=&  -\frac{n}{r^{n+1}}\cdot P_{2}(r).
\ee

\begin{table}[h]
\centering
 \begin{tabular}{|c|c|c|c|c|}
 \hline
Name & Definition & Derivative & Labels       \\
\hline
$E(r)$ & \footnotesize $\dsp {u^{\prime 2}(r)}/2+F(u)$ & \footnotesize $\dsp -(n-1)u^{\prime 2}(r)/r$ & \footnotesize \equ{ef}, \equ{ep}\\
\hline
$\widehat E(r)$ & \footnotesize $\dsp r^{2(n-1)}E(r)$ & \footnotesize $\dsp 2(n-1)r^{2n-3}F(u)$ & \footnotesize \equ{H}, \equ{HA}\\
\hline
$P(r)$& \footnotesize $\dsp 2r^nE(r)+(n-2)r^{n-1}uu' $ & \footnotesize $\dsp r^{n-1}h(u)$ & \footnotesize \equ{P}, \equ{Pr}\\
\hline
$P_1(r)$ & \footnotesize $\dsp P(r)+r^nh_1(u)$&  & \footnotesize \equ{P1}\\
\hline
$P_2(r)$ & \footnotesize $\dsp P(r)-r^n h(u)/n $& \footnotesize $ -\,\frac{r^nu'}n \,h_2(u)$ & \footnotesize \equ{P2}, \equ{mmyp2r}\\
\hline
$\omega(r)$ & \footnotesize $\dsp -\,{ru'(r)}/{u(r)}$ & \footnotesize $\dsp {P_1(r)}/(r^{n-1}u^{2})$ & \footnotesize \equ{omega}, \equ{omegap}\\
\hline
$\varrho(r)$ & \footnotesize $\dsp r^{n-1} \left[f'(u)u'v-f(u)v'\right]$&
\footnotesize $\dsp r^{n-1}f''(u)u^{\prime 2}v$ & \footnotesize \equ{rho}, \equ{rhop}\\
\hline
$Q(r)$ & \footnotesize $\dsp r^n\left[u'v'+f(u)v\right]+(n-2)r^{n-1}u'v$ &
\footnotesize $\dsp 2r^{n-1}f(u)v$ &  \footnotesize \equ{qm}, \equ{qp}\\
\hline
$Q_1(r)$ & \footnotesize $\dsp Q(r)+r^{n-1}u'v$ & \footnotesize $\dsp r^{n-1}\left[u'v'+f(u)v\right]$& \footnotesize \equ{q1}\\
\hline
$Q_2(r)$ & \footnotesize $\dsp Q(r)+2r^{n-1}u'v$ & \footnotesize $\dsp 2r^{n-1}u'v'$ &\footnotesize \equ{q2}\\
\hline
$Q_n(r)$ & \footnotesize $\dsp Q(r)+nr^{n-1}u'v$& \footnotesize $\dsp r^{n-1}\left[nu'v'-(n-2)f(u)v\right]$ & \footnotesize \equ{qn}\\
\hline
$M(r) $ & \footnotesize $\dsp r^{n-1}(u'v-uv')$ & \footnotesize $\dsp  r^{n-1}[uf'(u)-f(u)]v$
& \footnotesize \equ{qm}, \equ{mp}\\
\hline
$T_1(r)$ & \footnotesize $\dsp Q(r)- g_1(u)M(r)  $ & \footnotesize $\dsp
- g_1'(u)u'\cdot M(r) $&\footnotesize \equ{t1g1}, \equ{t1p}\\
\hline
$T_2(r)$ & \footnotesize $\dsp Q(r)- g_2(u)M(r)
$&  &\footnotesize \equ{g2} \\
\hline
$B_a(r)$ & \footnotesize $\dsp Q(r)-aM(r)-2F_a(u)\cdot {r^{n-1}v}/{u'}$ & \footnotesize $
-2F_a(u)\cdot \frac{Q_n(r)}{ru'^2}$&\footnotesize \equ{ba}, \equ{bap}\\
\hline
\end{tabular}
\ms
\caption{{\small {\it {\clb List of auxiliary functions and their derivatives.} } }}  \label{tb1}\vspace*{-10pt} \end{table}

\subsection{Functions associated with $u$ and $v$}
In our analysis, we occasionally employ the equation of $v$ in \equ{ev} in the alternative form
\bl{rnvp} (r^{n-1}v')'=r^{n-1}\left(v''+\frac{n-1}rv'\right)=-r^{n-1}f'(u)v.\ee
Define the linear operator
\bl{L} L:=\frac{d^2}{dr^2}+\frac{n-1}r \frac{d}{dr}+f'(u).\ee
Then \equ{ev} takes the simple form $Lv=0$. Through routine calculations, we can check that
$Lu=uf'(u)-f(u)$, $L(u')=(n-1)u'/r^2$, $L(ru')=-2f(u)$, and
\ben L(v')=\frac{n-1}{r^2}\, v'-f''(u)u'v.\een
For arbitrary smooth functions $\mathfrak{g}$ and $\mathfrak{h}$, there is a useful
Wronskian identity given by Tao ((B.16) in \cite{tao}),
\bl{tao} \qquad \frac{d}{dr}\left[(r^{n-1}(\mathfrak{g} \mathfrak{h'}- \mathfrak{h} \mathfrak{g'})\right]
=r^{n-1}(\mathfrak{g} L\mathfrak{h}-\mathfrak{h} L\mathfrak{g}).\ee
For the Wronskian of $u'$ and $v'$,
\bl{rhoA}  \varrho(r)=r^{n-1} \left(u''v'-u'v''\right)=r^{n-1} \left[f'(u)u'v-f(u)v'\right], \ee
applying \equ{tao} gives
\bl{rhop} \varrho'(r)= r^{n-1}\left[v'L(u')-u'L(v')\right]=r^{n-1}f''(u)u^{\prime 2}v.\ee
As $M(r)=r^{n-1}(u'v-uv')$ is the Wronskian of $u$ and $v$, we find
\bl{mp} M'(r)=r^{n-1}vLu=r^{n-1}[uf'(u)-f(u)]v. \ee
Since
\ben Q(r)=r^n\left[u'v'+f(u)v\right]+(n-2)r^{n-1}u'v=r^{n-1}[v'(ru')-v(ru')'],\een
$Q(r)$ is the Wronskian of $v$ and $ru'$, and applying \equ{tao} yields
\bl{qp}  Q'(r)=-r^{n-1}vL(ru')=2r^{n-1}f(u)v. \ee
Write $Q_i(r)=Q(r)+ir^{n-1}u'v$. With the help of \equ{rnup} and \equ{qp}, we find
\ben Q'_i(r)=2r^{n-1}f(u)v-ir^{n-1}f(u)v+ir^{n-1}u'v'=r^{n-1}\left[iu'v'+(2-i)f(u)v\right].  \een
It leads to, for $i=1, 2$ and $n$,
\bl{q1}  Q_1(r)=Q(r)+2r^{n-1}u'v, \quad   Q'_1(r)=r^{n-1}\left[u'v'+f(u)v\right].  \ee
\bl{q2} Q_2(r)=Q(r)+2r^{n-1}u'v, \quad   Q'_2(r)=2r^{n-1}u'v'.\ee
\bl{qn} Q_n(r)=Q(r)+nr^{n-1}u'v, \quad  Q'_n(r)=r^{n-1}\left[nu'v'-(n-2)f(u)v\right].\ee

The following identities are closely related to the $Q$-family functions:
\bl{dvup} && \frac d{dr}\frac {v(r)}{u'(r)}=\frac {u'v'-u''v}{u'^2}  \nonumber \\
&=& \frac {r^nu'v'+r^nf(u)v+(n-1)r^{n-1}u'v}{r^nu'^2}=\frac {Q_1(r)}{r^nu'^2}.\ee
Similarly, we have
\bl{dvarn} && \varphi_n(r):= \frac d{dr}\frac{r^{n-1}v}{u'}= \frac {(n-1)r^{n-2}u'v+r^{n-1}u'v'-r^{n-1}u''v}{u'^2}  \nonumber \\
&=& \frac {r^nu'v'+r^nf(u)v+2(n-1)r^{n-1}u'v}{ru'^2}=\frac{Q_n(r)}{ru'^2}.\ee
With the help of \equ{qp} and \equ{q2}, a direct differentiation also gives
\bl{varp} && \frac{d}{dr}\frac{Q(r)}{r^{n-1}u'}= \frac{2r^{2(n-1)}f(u)u'v+r^{n-1}f(u)Q(r)  }{r^{2(n-1)}u'^2} \nonumber \\
&=& f(u)\cdot \frac{Q(r)+2r^{n-1}u'v}{r^{n-1}u'^2}
= f(u)\cdot\frac {Q_2(r)}{r^{n-1}{u'^2}}. \ee

For the bridging function
\bl{bobo} B_0(r)=Q(r)-2F(u)\cdot \frac{r^{n-1}v}{u'},\ee
we find
\bl{bop} B'_0(r)=\, -2F(u)\cdot \frac d{dr}\frac{r^{n-1}v}{u'}=\,-2F(u)\cdot \varphi_n(r)=\,-2F(u)\cdot \frac{Q_n(r)}{ru'^2}.\ee
For the general bridging function $B_a(r)$, we use
\ben F'_a(u)=f(u)-\frac a2[uf'(u)-f(u)]\een
to calculate
\bl{bap} B'_a(r)&=& Q'(r)-aM'(r)-2r^{n-1}F_a'(u)v-2F_a(u)\cdot \frac d{dr}\frac{r^{n-1}v}{u'}\nonumber\\
&=& r^{n-1}v\left\{2f(u)-a\left[uf'(u)-f(u)\right]-2F_a'(u)\right\}-2F_a(u)\cdot \frac d{dr}\frac{r^{n-1}v}{u'}\nonumber\\
&=& -2F_a(u)\cdot \varphi_n(r)=\,-2F_a(u)\cdot \frac{Q_n(r)}{ru'^2}.\ee
For
\ben T_1(r)=Q(r)-g_1(u(r))M(r), \quad g_1(u)=\frac {2f(u)}{uf'(u)-f(u)},\een
we find similarly that
\bl{t1p}  T_1'(r)&=& 2r^{n-1}f(u)v-2r^{n-1}g_1(u)\left[uf'(u)-f(u)\right]v-g_1'(u)u'\cdot M(r) \nonumber\\
&=& -g_1'(u)u'\cdot M(r).\ee

\subsection{Connection identities} Our final calculations for the connection identities are restricted to the model case $f(u)=-u+u|u|^{p-1}$. Establishing analogous identities for other nonlinearities is, in general, highly nontrivial. To verify \equ{conn}, we begin with the definitions of $P$ and $Q$ to compute
\ben Q(r)-P(r)\cdot \frac vu
&=& r^n\left[u'v'+f(u)v\right]- r^n\left[u^{\prime 2}+2F(u)\right]\cdot \frac vu   \\
&=& \frac{r^nv}u\left(\frac{uu'v'}v-u'^2+uf(u)-2F(u)\right)\\
&=& -\frac{ru'}{u}\left[r^{n-1}(u'v-uv')-\frac{p-1}{p+1}\cdot \frac {r^{n-1}v}{u'}\cdot {|u|^{p+1}}\right].
 \een
By defining
\ben \va(r)=\frac{p-1}{p+1}\cdot \frac {r^{n-1}v}{u'}\cdot {|u|^{p+1}},\een
we obtain
\ben Q(r)-P(r)\cdot \frac vu=\omega(r) \left[{M(r)}- \va(r)\right]\een
and confirm \equ{conn}. By differentiating $T_2=T_1+|u|^{1-p}M$ and apply \equ{t1p}, we calculate
\ben T_2'(r) &=& -\frac{2uu'}{|u|^{p+1}}\cdot M(r) -(p-1)\frac {uu'}{|u|^{p+1}}\cdot M(r)+|u|^{1-p}M'(r)  \\
&=& -\frac {(p+1)uu'}{|u|^{p+1}}\cdot M(r)+|u|^{1-p}\cdot (p-1)u|u|^{p-1}v  \\
&=& -\frac {(p+1)uu'}{|u|^{p+1}}\cdot M(r)+(p-1)r^{n-1}uv \\
&=& -(p+1)\cdot \frac{uu'}{|u|^{p+1}}\cdot \left[{M(r)}-\va(r) \right], \een
where the last two equalities verify \equ{t2p} and \equ{t2p2}, respectively. By \equ{t2p}, we express
\ben M(r)=\frac {|u|^{p+1}}{(p+1)uu'}\left[(p-1)r^{n-1}uv-T_2'(r)\right].\een
By substituting this into the definition of $T_2$, we obtain
\bl{t22A} T_2(r)
&=& Q(r)-\frac {2F(u)}{(p-1)uu'}\cdot \left[(p-1)r^{n-1}uv-T_2'(r)\right] \nonumber \\
&=&Q(r)-2F(u)\cdot\frac {r^{n-1}v}{u'}+\frac {2F(u)}{(p-1)uu'}\cdot T_2'(r)\nonumber\\
&=&B_0(r)+\frac {2F(u)}{(p-1)uu'}\cdot T_2'(r). \ee
This justifies \equ{t22} by connecting $T_2$ with the bridging function $B_0$. }

\section*{Acknowledgements}
  {\clb The author is sincerely grateful to the reviewer for invaluable guidance in improving the clarity, presentation, and organization of this work. In particular, the reviewer suggested conditions (C1)-(C9), which enabled the proofs in Sections 2 and 4 to be extended to more general nonlinearities. The author dedicates this work to the memory of his mentors, Lynn Erbe and James Serrin, and to his father, Honghan Tang.}

\end{document}